\newtheorem {Theorem}                    {Theorem}
\newtheorem{mainTheorem}{Theorem}
\newtheorem {Proposition}[Theorem]       {Proposition}
\newtheorem {Lemma}      [Theorem]       {Lemma}
\theoremstyle{definition}
\newtheorem  {Definition} [Theorem]{Definition}
\theoremstyle{remark}
\newtheorem{Remark}[Theorem]	{Remark}
\newtheorem*{Remark-nn}		{Remark}    
\newtheorem{Notation}[Theorem]	{Notation}
\numberwithin{equation}{section}
\newcommand{\RR}{\mathbb{R}}
\newcommand{\CC}{\mathbb{C}}
\newcommand{\ZZ}{\mathbb{Z}}
\newcommand{\GW}{\text{GW}}
\newcommand{\OGW}{\text{OGW}}
\newcommand{\OGWbar}{\text{OGW}}
\newcommand{\cupdot}{\mathbin{\mathaccent\cdot\cup}}
\newcommand{\invecZN}[1]{\vec{#1}\in\left(\ZZ_{\geq 0}\right)^{N+1}}
\newcommand{\invecZK}[1]{\vec{#1}\in\left(\ZZ_{\geq 0}\right)^{K+1}}
\newcommand{\sumsupto}[2]{#1\, \vdash\, #2}
\newcommand{\parentheses}[1]{(#1)}
\begin{document}

	\title{WDVV-based recursion for open Gromov-Witten invariants}
	
	\author{Roi Blumberg}
	\address{School of Mathematical Sciences\\ Tel Aviv University\\Tel Aviv, 6997801, Israel}	
	\email{roiblumberg@mail.tau.ac.il}
	
	\author[S. Tukachinsky]{Sara B. Tukachinsky}
	\address{School of Mathematical Sciences\\ Tel Aviv University\\Tel Aviv, 6997801, Israel}\email{sarabt1@gmail.com}
	
	\subjclass[2020]{53D45, 14N35 (Primary) 14N10 (Secondary)}
	\date{December 2025}
	
	\begin{abstract}
		We give a computability result for open Gromov-Witten invariants based on open WDVV equations. This is analogous to the result of Kontsevich-Manin for closed Gromov-Witten invariants.
		For greater generality, we base the argument on a formal object, the Frobenius superpotential, that generalizes several different definitions of open Gromov-Witten invariants.
		As an application, we prove vanishing properties for open Gromov-Witten invariants on products of projective spaces.
	\end{abstract}
	
	\maketitle
	%
	%
	
	%
	%
	\setcounter{tocdepth}{1}
	\tableofcontents
	
	\section{Introduction}
	
	\subsection{Closed Gromov-Witten theory}
	Let $(X,\omega)$ be a closed symplectic manifold with $\dim_\RR X = 2n$ for some $n\geq 1$, and let $L\subset X$ be a closed, oriented, and relatively spin Lagrangian submanifold. Let $J$ be an $\omega$-tame almost complex structure on $X$. Let $H^*(X;\RR)$ and $H^*(X,L;\RR)$ be the absolute and relative de Rham cohomology rings of $X$ and $(X,L)$ with coefficients in $\RR$, respectively (see Section \ref{sec: Topological Preliminaries} for more details on the version of relative cohomology used here). Let $H_2(X;\ZZ)$ and $H_2(X,L;\ZZ)$ be the absolute and relative second singular homology groups of $X$ and $(X,L)$, respectively.
	
	Closed genus 0 Gromov-Witten theory seeks to provide suitable machinery to enumerate $J$-holomorphic curves of genus $0$ of a given degree $\beta\in H_2(X;\ZZ)$ that intersect the Poincar\'{e} duals to some given cohomology classes $\gamma_1,\dots,\gamma_\ell\in H^*(X;\RR)$.
	These invariants can be thought of as linear homomorphisms $\GW_{\beta}:H^*(X;\RR)^{\otimes\ell}\rightarrow\RR$, for $\beta\in H_2(X;\ZZ)$ and $\ell\in\ZZ_{\geq 0}$. For a given invariant $\GW_{\beta}\big(\gamma_1\otimes\dots\otimes\gamma_\ell\big)$, we call the class $\beta$ the \emph{degree} of the invariant, and we call the classes $\gamma_1,\dots,\gamma_\ell$ the \emph{constraints} of the invariant. These homomorphisms can be realized as coefficients of a power series in a Novikov ring, called the closed Gromov-Witten potential, and they satisfy basic properties, called the Gromov-Witten axioms (see Section \ref{Closed GW Invariants} for further details).
	
	\subsection{Open Gromov-Witten theory} In a similar fashion to the closed theory, genus 0 open Gromov-Witten theory seeks to provide suitable machinery to enumerate pseudoholomorphic disks with boundary in $L$, of a relative degree $\beta\in H_2(X,L;\ZZ)$, that intersect $L$ at $k\in\ZZ_{\geq 0}$ points and intersect the Poincar\'{e} duals to given relative cohomology classes $\gamma_1,\dots,\gamma_\ell\in H^*(X,L;\RR)$. Genus 0 open Gromov-Witten invariants were defined in several different works, such as \cite{Solomon2006Intersection}, \cite{Fukaya2011CountingPseudoHolomorphic}, \cite{Georgieva2016OGWDiskInvariants}, \cite{SolomonTukachinsky2019relative}, and \cite{SolomonTukachinsky2021PointLikeBoundingChain} . Throughout, we will restrict our discussion to genus $0$ open and closed Gromov-Witten invariants.
	
	Under any definition, the open Gromov-Witten invariants can be thought of as a family of linear homomorphisms $\OGWbar_{\beta,k}^{\parentheses{\ell}}:H^*(X,L;\RR)^{\otimes\ell}\rightarrow\RR$, for $\beta\in H_2(X,L;\ZZ)$ and $k,\ell\in\ZZ_{\geq 0}$. For a given invariant $\OGWbar_{\beta,k}^{\parentheses{\ell}}\big(\gamma_1\otimes\dots\otimes\gamma_\ell\big)$, we call the class $\beta$  the \emph{degree} of the invariant,  $k$ the \emph{number of boundary constraints} of the invariant, and $\gamma_1,\dots,\gamma_\ell$ the \emph{interior constraints} of the invariant. In a fashion similar to the closed case, these homomorphisms can be realized as coefficients of some power series in a Novikov ring, called the open Gromov-Witten superpotential, and they satisfy basic properties, analogous to the Gromov-Witten axioms.
	
	\subsection{Basic notation and definitions}
	Denote by $\mu:H_2(X,L;\ZZ)\rightarrow\ZZ$ the Maslov index and let $c_1:H_2(X;\ZZ)\rightarrow\ZZ$ be the first Chern class.
	Write
	\begin{equation*}
		\Pi = H_2(X,L;\ZZ)/S_L,
	\end{equation*}
	with $S_L$ some subgroup of the group $\ker(\omega\oplus\mu)$. Let $\beta_0$ be the zero element of $\Pi$. Set
	\begin{equation*}
		\Pi_{\geq 0} = \left\{\beta\in\Pi\;\middle|\;\omega(\beta)\geq 0\right\}
	\end{equation*}
	and
	\begin{equation*}
		\Pi_{> 0} = \left\{\beta\in\Pi\;\middle|\;\omega(\beta)> 0\right\}.
	\end{equation*}
	
	Let $\pi:H_2(X;\ZZ)\rightarrow H_2(X,L;\ZZ)$ be the map coming from the long exact sequence for relative homology, and let $\varpi:H_2(X;\ZZ)\rightarrow \Pi$ be the composition of $\pi$ with the projection map from $H_2(X,L;\ZZ)$ to $\Pi$.
	
	The following statement holds, as shown, for example, in \cite[Proposition~4.1.4]{Mcduff2004JHolomorphicCA}:
	\begin{Proposition}\label{hbar Proposition}
		There exists a constant $\hbar>0$, such that:
		\begin{enumerate}
			\item For any $\beta\in\Pi$, if $\omega(\beta)>0$ and there exists a $J$-holomorphic disk $u:(D,\partial D)\rightarrow(X,L)$ with $u_*[D] = \beta$, then $\omega(\beta)\geq\hbar$.
			\item For any $\beta\in H_2(X;\ZZ)$, if $\omega(\beta)>0$ and there exists a $J$-holomorphic curve $u:S^2\rightarrow X$ such that $u_*[X] = \beta$, then $\omega(\beta)\geq\hbar$.
		\end{enumerate}
	\end{Proposition}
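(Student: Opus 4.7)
The plan is to follow the standard approach for such energy gap statements, which is an application of Gromov's monotonicity lemma. The lemma asserts the existence of constants $C, r_0 > 0$, depending only on the geometry of $(X, \omega, J)$ and (for disks) on the compact Lagrangian $L$, such that any non-constant $J$-holomorphic sphere passing through a point $x \in X$, or any non-constant $J$-holomorphic disk with boundary in $L$ passing through such a point, satisfies $\int_{u^{-1}(B_r(x))} u^*\omega \geq C r^2$ whenever $r \leq r_0$ and the image is not entirely contained in $B_r(x)$.

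From this, both parts follow quickly. For (2), take any $x$ in the image of a non-constant $u: S^2 \to X$; the compactness of $S^2$ and non-constancy of $u$ prevent the image from collapsing into an arbitrarily small ball, since otherwise the maximum principle applied in a local coordinate chart would force $u$ to be constant. Monotonicity at $r = r_0$ then yields $\omega(\beta) \geq C r_0^2 =: \hbar_1$. For (1), the same reasoning applies to a non-constant disk, with $x$ chosen in $L$ if the image meets $L$ (invoking the boundary version of monotonicity, which uses that $L$ is totally real with respect to $J$) and in the interior otherwise. This produces a positive lower bound $\omega(\beta) \geq \hbar_2 > 0$. Setting $\hbar := \min(\hbar_1, \hbar_2)$ completes the argument.

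The main obstacle is the monotonicity lemma itself. Its proof proceeds by establishing an isoperimetric inequality for $J$-holomorphic curves in small geodesic balls and then integrating the resulting differential inequality for the area function $A(r) = \int_{u^{-1}(B_r(x))} u^*\omega$. The boundary version requires extra care to handle the totally real boundary condition, typically via a Schwarz reflection argument across $L$ combined with the interior estimate, or by working directly with the convexity of the squared distance function to a point of $L$. Once these analytic ingredients are in hand, the deduction of the proposition is essentially immediate, and the uniformity of $\hbar$ across all classes $\beta$ is automatic since the lemma's constants depend only on the ambient geometry, not on the curve.
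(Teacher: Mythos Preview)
The paper does not actually give a proof of this proposition; it simply cites \cite[Proposition~4.1.4]{Mcduff2004JHolomorphicCA} for the statement. Your outline via Gromov's monotonicity lemma is essentially the argument that appears in that reference, so your approach is correct and in line with the cited source.
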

	
	 Define the Novikov coefficient rings by
	\begin{equation*}
		\Lambda = \left\{\sum\limits_{i=0}^\infty a_iT^{\beta_i}\;\middle|\;a_i\in\RR,\beta_i\in H_2(X,L;\RR),\omega\left(\beta_i\right)\geq 0,\lim\limits_{i\rightarrow\infty}\omega\left(\beta_i\right)=\infty\right\},
	\end{equation*}
	\begin{equation*}
		\Lambda_c = \left\{\sum\limits_{i=0}^\infty a_iT^{\varpi(\beta_i)}\;\middle|\;a_i\in\RR,\beta_i\in H_2(X;\RR),\omega\left(\beta_i\right)\geq 0,\lim\limits_{i\rightarrow\infty}\omega\left(\beta_i\right)=\infty\right\}.
	\end{equation*}
	Let $s,t_0,\dots,t_N$ be formal variables. Define the following rings of formal power series:
	\begin{equation*}
		R = \Lambda[[s,t_0,\dots,t_N]],
	\end{equation*}
	\begin{equation*}
		Q = \Lambda_c[[t_0,\dots,t_N]].
	\end{equation*}	
	
	\subsection{Computability for Gromov-Witten invariants} In \cite{KontsevitchManin1994}, Kontsevich and Manin proved the following theorem:
	\begin{Theorem}\label{Konsevitch-Manin}
		Let $(X,\omega)$ be a symplectic manifold, and assume that the cohomology ring $H^*(X;\RR)$ is generated by $H^2(X;\RR)$. Then all closed Gromov-Witten invariants of $X$ can be computed from the following set of initial values:
		\begin{align*}
			\{\GW_{\beta}\left(\gamma_1\otimes\gamma_2\otimes\gamma_3\right)\, |\, &\beta\in H_2(X;\ZZ),\, \gamma_1,\gamma_2,\gamma_3\in H^*(X;\RR),\\
			&|\gamma_3|=2, 2n+2c_1(\beta)=|\gamma_1|+|\gamma_2|+|\gamma_3|\}.
		\end{align*}
	\end{Theorem}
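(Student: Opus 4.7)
The plan is to combine the WDVV equation for the closed Gromov-Witten potential $\Phi$ with the divisor, fundamental-class, and point-mapping axioms, together with the hypothesis $H^*(X;\RR) = \RR\langle H^2(X;\RR)\rangle$, to reduce every closed Gromov-Witten invariant of $X$ to the listed initial values. By multilinearity, I may assume every insertion $\gamma_j$ is a cup-product of degree-two classes. The reduction then proceeds by a lexicographic induction whose outer parameter is $\omega(\beta)\ge 0$, well-founded by Proposition \ref{hbar Proposition}, middle parameter is the number of insertions $\ell$, and inner parameter is the cup-length of a designated insertion, say $\gamma_3$.

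The base case $\beta = \beta_0$ is handled by the point-mapping axiom, which identifies $\GW_{\beta_0}(\gamma_1\otimes\gamma_2\otimes\gamma_3) = \int_X \gamma_1\cup\gamma_2\cup\gamma_3$ and forces all other degree-zero invariants to vanish. For the inductive step, with $\beta$ satisfying $\omega(\beta)>0$ and all $\GW_{\beta'}$ known for $\omega(\beta')<\omega(\beta)$, I would first use the divisor axiom
$$
\GW_\beta(D\otimes\gamma_1\otimes\cdots\otimes\gamma_{\ell-1}) = D(\beta)\cdot \GW_\beta(\gamma_1\otimes\cdots\otimes\gamma_{\ell-1}), \qquad \ell\ge 4,
$$
to strip off any single-divisor insertion, reducing $\ell$-point invariants to $3$-point invariants whenever such a divisor is present. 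To then reduce a general $3$-point invariant $\GW_\beta(\gamma_1\otimes\gamma_2\otimes\gamma_3)$ with $\gamma_3 = D\cup\gamma_3'$ and $D\in H^2(X;\RR)$, I would extract the coefficient of $T^\beta$ in the WDVV identity at $t=0$ applied to the quadruple $(\gamma_1,\gamma_2,D,\gamma_3')$. After separating off the $\beta_1=\beta_0$ and $\beta_2=\beta_0$ summands on each side via the point-mapping axiom, the identity yields
$$
\GW_\beta(\gamma_1\otimes\gamma_2\otimes D\cup\gamma_3') \;=\; \GW_\beta(\gamma_1\cup D\otimes\gamma_2\otimes\gamma_3') + \GW_\beta(\gamma_1\otimes D\otimes\gamma_2\cup\gamma_3') - \GW_\beta(\gamma_1\cup\gamma_2\otimes D\otimes\gamma_3') + R,
$$
where $R$ is an $\RR$-linear combination of products $\GW_{\beta_1}(\cdots)\cdot g^{ef}\cdot \GW_{\beta_2}(\cdots)$ with $\omega(\beta_i)<\omega(\beta)$, hence known by the outer induction. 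The second and third terms are initial values by the symmetry of $\GW_\beta$ in its arguments, while the first term has cup-length of $\gamma_3'$ strictly smaller than that of $\gamma_3$; the inner induction on cup-length therefore closes.

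The main obstacle, in my view, is handling $(\ell\ge 4)$-point invariants in which no single insertion is a divisor, so that the divisor axiom does not apply directly; an analogous WDVV manipulation, obtained by differentiating $\Phi$ further and evaluating at $t=0$, is then needed to lower either $\ell$ or the cup-length of some insertion. Additional bookkeeping is required to track the dimension constraint $2n+2c_1(\beta) = \sum_j|\gamma_j| - 2(\ell-3)$ at every step, so that non-matching terms are pruned automatically, and to verify that the intermediate invariants absorbed into $R$ admit the same reductive treatment; this follows formally from the outer induction but requires care to arrange cleanly. Once these details are organized, iterating the reductions above presents every closed Gromov-Witten invariant of $X$ as an explicit polynomial in the listed initial values.
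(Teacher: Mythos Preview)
The paper does not prove this theorem; it is cited from Kontsevich--Manin, with only a one-paragraph sketch of the idea in Section~1.7. Your proposal follows that same idea---decompose an insertion as $D\cup\gamma'$ with $|D|=2$, then use WDVV together with the zero, fundamental-class, and divisor axioms to express the invariant in terms of invariants with lower energy or lower cup-length---so the approach is correct and matches the paper's description.

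One organizational remark: your plan separates the cases $\ell=3$ (handled by WDVV at $t=0$) and $\ell\ge 4$ (handled by the divisor axiom when possible, otherwise flagged as the ``main obstacle''). In the standard argument, and in the paper's parallel open-case Lemmas~\ref{All OGW are Computable lemma pt. 2} and~\ref{All OGW are Computable lemma pt. 2, 2}, this split is avoided: one extracts the coefficient of $T^\beta\prod_{j}t_{i_j}$ in the WDVV identity directly, for arbitrary $\ell$, after choosing $(a,b,c,d)$ so that $\Delta_b\cup\Delta_c$ reconstructs the decomposed insertion. The $\beta_1=0$ and $\beta_2=0$ contributions on each side then produce exactly the desired recursion at level $\ell$, with all remaining terms of strictly lower energy. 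This uniform treatment removes your obstacle entirely rather than leaving it as a separate case, and it is what you are gesturing at when you say ``differentiating $\Phi$ further and evaluating at $t=0$.'' Your inner induction on the cup-length of a designated slot is then replaced by a cleaner lexicographic order on $(\omega(\beta),\ell,\max_j|\gamma_j|)$ or on the tuple of degrees, exactly as the paper does for the open invariants.
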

	The theorem shows that closed Gromov-Witten invariants can be recursively computed from an initial set of known values in some cases. Under the further assumption that $X$ is monotone, this initial set of values is also finite.

	Open Gromov-Witten invariants have been shown to be computable from a finite initial set of values in several special cases. These cases include blowups of $\CC P^2$ at real points and pairs of conjugate points~\cite{HorevSolomon, ChenZinger2021WDVVRelations}, $\CC P^n$ with odd $n$ \cite{SolomonTukachinsky2019relative, GeorgievaZingerEnumeration},
	and quadric hypersurfaces~\cite{Examples_of_relative_quatum_cohomology,ChenZinger2021WDVVRelations}.
	In all these special cases, the Lagrangian submanifold is taken to be the fixed locus of an anti-symplectic involution.
	Another example is the Chiang Lagrangian \cite{ChiangLagrangianRQH}. Furthermore, in some cases for $2$ or $3$ dimensions, the open Gromov-Witten invariants coincide, up to a known factor, with the Welschinger invariants~\cite[Theorem 5]{SolomonTukachinsky2021PointLikeBoundingChain}, for which various computational results have been published.
	
	It is natural to ask whether an analogous statement to Theorem \ref{Konsevitch-Manin} can be made for open Gromov-Witten invariants. This is the main objective of the current work.

	\subsection{Frobenius superpotentials} In this work, to make the results general and apply them to different definitions of the open Gromov-Witten invariants, we define an object called a \emph{Frobenius superpotential}. A Frobenius superpotential is an element of the ring $R$ that satisfies Gromov-Witten type axioms, as detailed in Section \ref{section: Frobenius Superpotential}. The Frobenius superpotential serves as an algebraic generalization of the open Gromov-Witten superpotential.

	\subsection{The main theorems}\label{main theorems}
	\begin{Notation}\label{Notation Convention for Frobenius}
		For any list of $N+1$ non-negative integers $\vec{r} = \left(r_0,\dots,r_N\right)\in\left(\ZZ_{\geq 0}\right)^{N+1}$ and for any non-negative integer $\ell$, we write $\ell\vdash\vec{r}$ if $\sum\limits_{i=0}^Nr_i=\ell$.
	\end{Notation}
	Let $K+1$ be the dimension of $H^*(X;\RR)$, and $N+1$ the dimension of $H^*(X,L;\RR)$. In Section \ref{Choice of Bases for Relative and Absolute Cohomologies} we choose a basis $\left\{\Gamma_0,\dots,\Gamma_N\right\}$ for $H^*(X,L;\RR)$ compatible with a splitting of the exact sequence~\eqref{Relative long exact sequence} relating $H^*(X;\RR)$ and $H^*(X,L;\RR)$.
	Let
	\begin{equation}\label{set of OGWs}
		A = \left\{\OGWbar_{\beta,k}^{(\ell)}:H^*(X,L;\RR)^{\otimes\ell}\rightarrow\RR \middle|\;\beta\in \Pi_{\geq 0},\,k,\ell\in\ZZ_{\geq 0}\right\}
	\end{equation}
	be a family of homomorphisms, and denote by $\Omega$ the following element in $R$:
	\begin{equation}\label{Open Generating Function}
		\Omega = \sum\limits_{\substack{\beta\in \Pi_{\geq 0}\\ k,\ell\in\ZZ_{\geq 0}}}\sum\limits_{\substack{\invecZN{r}\\ \sumsupto{\ell}{\vec{r}}}}T^\beta\frac{s^k}{k!}\prod\limits_{i=0}^{N}\frac{t_{N-i}^{r_{N-i}}}{(r_{N-i})!}\OGWbar_{\beta,k}^{\parentheses{\ell}}\big(\bigotimes\limits_{i=0}^{N}\Gamma_{N-i}^{\otimes r_{N-i}}\big).
	\end{equation}
	Assume that the following conditions are satisfied:
	\begin{enumerate}
		\item \label{cond_1_main_therorem}$\Omega$ is a Frobenius superpotential (see Section \ref{section: Frobenius Superpotential} for details).
		\item \label{cond_2_main_therorem}The absolute cohomology ring $H^*(X;\RR)$ is generated by the second cohomology, $H^2(X;\RR)$.
	\end{enumerate}
	
	Then we prove two main theorems:
	\begin{mainTheorem}\label{All OGW are computable 1 Fellowship of the OGW}
		For any $\beta\in H_2(X,L;\ZZ)$, any $k,\ell\in\ZZ_{\geq 0}$, and any $\gamma_1,\dots,\gamma_\ell\in H^*(X,L;\RR)$, the value $\OGWbar_{\beta,k}^{\parentheses{\ell}}\left(\gamma_1\otimes\dots\otimes\gamma_\ell\right)$ can be computed from the following sets of initial values:
		\begin{align*}
			\big\{&\OGWbar_{\beta,k}^{\parentheses{0}}\, \big|\, \beta\in\Pi_{\geq 0},\,  n-3+\mu(\beta)+k=kn\big\},\\
			\big\{&\OGWbar_{\beta,0}^{\parentheses{1}}\left(\Gamma_i\right)\, \big|\, \beta\in\Pi_{\geq 0},\, 0\leq i\leq K,\, n-3+\mu(\beta)+2=|\Gamma_i|\big\},\\
			\big\{&\GW_{\beta}\left(\gamma_1\otimes\gamma_2\otimes\gamma_3\right)\, \big|\, \beta\in H_2(X;\ZZ),\, \gamma_1,\gamma_2,\gamma_3\in H^*(X;\RR),\\
			&\hspace{9em}|\gamma_3|=2, 2n+2c_1(\beta)=|\gamma_1|+|\gamma_2|+|\gamma_3|\big\}.
		\end{align*}
	\end{mainTheorem}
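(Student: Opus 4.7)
The plan is to prove the theorem by induction on a well-ordering of triples $(\omega(\beta),\ell,d)$, where $d$ measures the maximal cohomological degree among the interior insertions. Proposition~\ref{hbar Proposition} guarantees that $\omega(\beta)$ takes discrete values bounded below by $\hbar$ on the support of $\Omega$, so induction on $\omega(\beta)$ is well-founded. At each step, the goal is to express a target value $\OGWbar_{\beta,k}^{(\ell)}(\gamma_1\otimes\cdots\otimes\gamma_\ell)$ either as one of the listed initial values, or as a polynomial combination of invariants that are strictly smaller in the chosen ordering and hence already computed.

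The central engine will be the open WDVV-type relations encoded in the Frobenius superpotential axioms of Section~\ref{section: Frobenius Superpotential}. For $\ell\geq 2$, I would single out two insertions, apply an appropriate WDVV relation, and extract the coefficient of the monomial $T^\beta s^k \prod t_i^{r_{N-i}}/(r_{N-i})!$ that isolates the target invariant. On the opposite side of the identity, the extracted coefficient is a sum of products of OGW and closed GW invariants in which each factor involves either strictly smaller $\omega(\beta)$ (by degree-splitting $\beta = \beta'+\beta''$ with $\omega(\beta'),\omega(\beta'')<\omega(\beta)$) or strictly fewer interior insertions; closed GW factors are handled through Theorem~\ref{Konsevitch-Manin} and fall into the third initial set.

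To descend to the base cases $\ell\leq 1$, I would exploit the hypothesis that $H^*(X;\RR)$ is generated by $H^2(X;\RR)$: every interior class coming from $X$ can be written as an iterated cup product of $H^2$-classes, and the remaining generators of $H^*(X,L;\RR)$ will be handled by the particular basis chosen in Section~\ref{sec: Topological Preliminaries}. Applying WDVV in reverse to a two-factor cup product splits an $\ell=1$ insertion of a product class into an $\ell=2$ configuration, which is then reduced by the previous paragraph. The divisor and fundamental class axioms satisfied by a Frobenius superpotential strip off $H^2$-insertions in exchange for multiplicative factors depending on $T^\beta$, eventually reducing an arbitrary $\ell=1$ invariant to one of the form $\OGWbar_{\beta,0}^{(1)}(\Gamma_i)$. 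For $\ell=0$, the degree axiom forces $n-3+\mu(\beta)+k=kn$ whenever the invariant is nonzero, placing it in the first initial set.

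The main obstacle I anticipate is not any single identity, but arranging the induction so that every WDVV-reduction is strictly downward in the chosen ordering. Because open WDVV relations simultaneously involve boundary markings (through the variable $s$) and interior markings, and because they couple to closed GW invariants, a careless application can trade a target invariant for one of equal or larger complexity; the selection of which two insertions to single out, and which partial derivatives of $\Omega$ to combine, must be made so that the surviving top-order term is strictly simpler. Once this bookkeeping is established, the remaining argument is a systematic manipulation of the generating-function identity \eqref{Open Generating Function} together with the closed Kontsevich-Manin recursion.
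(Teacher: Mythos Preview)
Your overall architecture matches the paper's: induct on energy using Proposition~\ref{hbar Proposition}, use the first open WDVV relation~\eqref{Open WDVV Inner Constraints} for $\ell\geq 2$, and land in the listed initial sets. The treatment of $\ell\geq 2$ and $\ell=0$ is essentially correct.

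The gap is the case $\ell=1$ with $k\geq 1$. Your sentence ``the divisor and fundamental class axioms\dots eventually reducing an arbitrary $\ell=1$ invariant to one of the form $\OGWbar_{\beta,0}^{(1)}(\Gamma_i)$'' is wrong: those axioms act only on interior insertions and never touch $k$. Your alternative plan, ``split an $\ell=1$ insertion into an $\ell=2$ configuration, then reduce by the previous paragraph,'' is circular as stated---the first WDVV relation~\eqref{Open WDVV Inner Constraints} requires three interior indices $a,b,c$, so you cannot isolate an $\ell=1$ coefficient at top energy from it; and if you could produce an $\ell=2$ term at the same $(\omega(\beta),k)$, reducing it via the $\ell\geq 2$ step would hand you back $\ell=1$ terms at the same energy.

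What actually works here, and what the paper does, is to use the \emph{second} open WDVV equation~\eqref{Open WDVV Boundary Constraint}, the one with the boundary derivative $\partial_s$. Writing $\Gamma_{i_1}=\Gamma_a\wedge\Gamma_b$ with $|\Gamma_b|=2$ and extracting the coefficient of $T^\beta s^{k-1}$ from~\eqref{Open WDVV Boundary Constraint}, the term $\sum_{\mu,\nu}\GW_0(\Delta_a\otimes\Delta_b\otimes\Delta_\mu)g^{\mu\nu}\OGWbar_{\beta,k}^{(1)}(\Gamma_\nu)$ collapses (via the zero axiom and Lemma~\ref{g is block matrix when needed}) to the target $\OGWbar_{\beta,k}^{(1)}(\Gamma_a\wedge\Gamma_b)$, and every other surviving open factor has strictly smaller energy once the $\beta_0$ contributions are killed by the zero axiom~\eqref{Open Zero} and the observation that $P_\RR$ vanishes on $H^*(X;\RR)$. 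This is where the coupling between boundary and interior markings is genuinely exploited, and it is not captured by your $(\omega(\beta),\ell,d)$ ordering or by the interior-only WDVV. Relatedly, the paper's ordering places $k$ before $\ell$; while for Theorem~\ref{All OGW are computable 1 Fellowship of the OGW} alone one can get away without it (all problematic terms drop in $\omega$), your narrative of ``reaching $k=0$'' does not reflect what the recursion actually does.
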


	\begin{mainTheorem}\label{All OGW are computable 2 two OGW two computable}
		Assume $S_L = \ker(\omega\oplus\mu)$. Assume that there exists some $\beta'\in\Pi_{>0}$ with $\omega(\beta') = \hbar$, some $\ell'\in\ZZ_{\geq 1}$, and some $0\leq i_1,...,i_{\ell'}\leq K$, for which
		\[ \OGWbar_{\beta',0}^{\parentheses{\ell'}}\left(\Gamma_{i_1}\otimes\dots\otimes\Gamma_{i_{\ell'}}\right)\neq 0. \]
		Assume further that the pair $(\beta',\ell')$ is minimal in the sense that for any $\beta''\in \Pi_{>0}$ with $\omega(\beta'') = \hbar$, any $\ell''\in\ZZ_{\geq 0}$ that satisfies $\ell''<\ell'$, and any $1\leq j_1,\dots,j_{\ell''}\leq K$, the term $\OGWbar_{\beta',0}^{(\ell'')}\left(\Gamma_{j_1}\otimes\dots\otimes\Gamma_{j_{\ell''}}\right)$ vanishes.
		
		If either $\ell'> 1$ or $\left|\Gamma_{i_1}\right| = 2n$, then the sets of initial values from Theorem~\ref{All OGW are computable 1 Fellowship of the OGW} can be restricted to
		\begin{align*}
			\big\{&\OGWbar_{\beta',0}^{\parentheses{\ell'}}\left(\Gamma_{i_1}\otimes\dots\otimes\Gamma_{i_{\ell'}}\right)\big\}\\
			\big\{&\OGWbar_{\beta,k}^{\parentheses{0}}\, \big|\, \beta\in\Pi_{\geq 0},\, k\in\left\{0,1\right\},\, n-3+\mu(\beta)+k=kn\big\},\\
			\big\{&\OGWbar_{\beta,0}^{\parentheses{1}}\left(\Gamma_i\right)\, \big|\, \beta\in\Pi_{\geq 0},\, 0\leq i\leq K,\, n-3+\mu(\beta)+2=|\Gamma_i|\big\},\\
			\big\{&\GW_{\beta}\left(\gamma_1\otimes\gamma_2\otimes\gamma_3\right)\, \big|\, \beta\in H_2(X;\ZZ),\, \gamma_1,\gamma_2,\gamma_3\in H^*(X;\RR),\\
			&\hspace{9em}|\gamma_3|=2, 2n+2c_1(\beta)=|\gamma_1|+|\gamma_2|+|\gamma_3|\big\}.
		\end{align*}
	\end{mainTheorem}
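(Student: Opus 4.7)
The plan is the following. By Theorem~\ref{All OGW are computable 1 Fellowship of the OGW}, it suffices to show that the pure-boundary invariants $\OGWbar_{\beta,k}^{(0)}$ with $k\geq 2$ and $n-3+\mu(\beta)+k=kn$ --- the only initial values present in Theorem~\ref{All OGW are computable 1 Fellowship of the OGW} but missing from the reduced list --- can themselves be recovered from the reduced list, using the open WDVV-type relations built into the Frobenius superpotential axioms. The hypothesized minimal nonvanishing mixed invariant $v:=\OGWbar_{\beta',0}^{(\ell')}(\Gamma_{i_1}\otimes\dots\otimes\Gamma_{i_{\ell'}})$ will play the role of a nonzero coupling constant driving the recursion.

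I would organize an induction on the pair $(\omega(\beta),k)$, ordered lexicographically. The assumption $S_L=\ker(\omega\oplus\mu)$ together with Proposition~\ref{hbar Proposition} forces $\omega$ to take values in $\{0\}\cup[\hbar,\infty)$ in discrete jumps, so the induction is well-founded. The cases $k\in\{0,1\}$ are already in the reduced list, and the $\omega(\beta)=0$ case is pinned down by the unit and basic axioms of the Frobenius superpotential. For $k\geq 2$ and $\omega(\beta)\geq\hbar$ satisfying the dimension condition, assume inductively that all pure-boundary invariants of strictly smaller $(\omega,k)$ are known, and extract $\OGWbar_{\beta,k}^{(0)}$ from a suitably chosen WDVV coefficient.

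The mechanism is to differentiate the open associativity identity $k-1$ times with respect to $s$ and once each with respect to $t_{i_1},\dots,t_{i_{\ell'}}$, then read off the coefficient of $T^{\beta+\beta'}$ at $s=t_0=\dots=t_N=0$. This should yield an equation of schematic form
\[
c\cdot v\cdot\OGWbar_{\beta,k}^{(0)} + (\text{terms of strictly smaller complexity}) = 0,
\]
with $c\neq 0$, where the remainder is expressible via the inductive hypothesis together with the unchanged single-interior and closed Gromov--Witten initial values. The dichotomy $\ell'>1$ versus $\ell'=1$ with $|\Gamma_{i_1}|=2n$ is precisely what is needed to force $c\neq 0$: when $\ell'>1$ the redundancy among the interior insertions produces a nonvanishing multinomial factor, while when $\ell'=1$ the top-degree constraint is required for the only consistent dimensional balance in the WDVV splitting.

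The main obstacle is the careful degree and dimension accounting: verifying $c\neq 0$, and more subtly ensuring that no competing invariant of the form $\OGWbar_{\beta'',0}^{(\ell'')}$ with $\omega(\beta'')=\hbar$ and $\ell''<\ell'$ contaminates the leading coefficient. This non-contamination is exactly where the minimality of $(\beta',\ell')$ is essential, and the verification itself relies on the grading, divisor, unit, and associativity axioms of the Frobenius superpotential together with the specific basis choice fixed in Section~\ref{sec: Topological Preliminaries}. Once this combinatorial check is in place, the induction closes and the full conclusion follows from Theorem~\ref{All OGW are computable 1 Fellowship of the OGW}.
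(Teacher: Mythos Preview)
Your overall strategy matches the paper's: reduce to showing $\OGWbar_{\beta,k}^{(0)}$ with $k\ge 2$ is determined by lower data, via the second open WDVV relation~\eqref{Open WDVV Boundary Constraint} at degree $\beta+\beta'$, where the nonvanishing $v$ serves as the leading coefficient. The derivative count is slightly off---the paper takes $a=i_1$, $b=i_2$ (or $a$ a divisor, $b=i_1$ when $\ell'=1$) and extracts the coefficient of $T^{\beta+\beta'}s^{k-2}\prod_{j\ge 3}t_{i_j}$, since $\partial_a\partial_b$ and $\partial_s^2$ already absorb two of each---but this is cosmetic.

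There is, however, a genuine gap in your claim that all remaining terms are of ``strictly smaller complexity.'' The summand $\sum_{\mu,\nu}\partial_a\partial_b\partial_\mu\Phi\, g^{\mu\nu}\partial_s\partial_\nu\Omega$ contributes, when the closed factor has degree zero, the coefficient
\[
\OGWbar_{\beta+\beta',\,k-1}^{(\ell'-1)}\!\big(\Gamma_a\wedge\Gamma_b\otimes\Gamma_{i_3}\otimes\dots\otimes\Gamma_{i_{\ell'}}\big),
\]
which has energy $\omega(\beta)+\hbar>\omega(\beta)$ and is therefore \emph{larger} in your lexicographic $(\omega,k)$ order. This is exactly where the dichotomy in the hypothesis enters. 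When $\ell'=1$, the paper chooses $b=i_1$ with $|\Gamma_{i_1}|=2n$ and $a$ a divisor, so $\Gamma_a\wedge\Gamma_b=0$ and the problematic term vanishes outright; this is the real role of the top-degree assumption, not merely ``dimensional balance.'' When $\ell'\ge 2$, the term does not vanish, and the paper proves a separate auxiliary lemma (another application of the boundary WDVV at degree $\beta+\beta'$ with one fewer interior insertion) showing that $\OGWbar_{\beta+\beta',k-1}^{(\ell'-1)}(\cdots)$ is itself expressible via coefficients of order strictly below $(\omega(\beta),k,0,0)$; the minimality of $(\beta',\ell')$ is invoked again there to kill competing terms. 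Your sketch does not account for this extra step, and without it the induction does not close.
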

	
	\begin{Remark}\label{ell'=0 remark}
		Theorem \ref{All OGW are computable 2 two OGW two computable} does not give us information in the case when we only know that there exists some $\beta'\in\Pi_{>0}$ with $\omega(\beta')=\hbar$ such that
		\[ \OGWbar_{\beta',0}^{\parentheses{0}} \neq 0. \]
		However, it follows from the degree axiom (see Section \ref{section: Frobenius Superpotential} for more details) that this case is only possible if $\mu(\beta')=3-n$, which does not hold for a wide variety of examples (for instance, if $L$ is monotone and $n\geq 3$, or if $L$ is a special Lagrangian in a Calabi-Yau $X$ with $n\neq 3$).
	\end{Remark}

	\begin{Remark}
		Theorem~\ref{All OGW are computable 2 two OGW two computable} is a generalization of the computation for projective spaces given in~\cite{SolomonTukachinsky2019relative}. It does not generalize the computation offered for the Chiang Lagrangian~\cite{ChiangLagrangianRQH} nor for quadric hypersurfaces~\cite{Examples_of_relative_quatum_cohomology}. Each of those other computations uses properties which are rather specific to the manifold in question.
		There does seem to be a principle of the form ``There exists an invariant with interior constraints the non-vanishing of which guarantees computability of invariants with boundary constraints only''. At this time we are not sure whether a recipe exists for identifying such an initial invariant in general.
	\end{Remark}

	The proofs of Theorems \ref{All OGW are computable 1 Fellowship of the OGW} and \ref{All OGW are computable 2 two OGW two computable}, and the proof of Remark \ref{ell'=0 remark}, are given in Section \ref{Results}.
	
	As stated before and as expanded upon in Section \ref{subsection:Geometric Realization}, in some situations, a Frobenius superpotential $\Omega$ can be realized as a generating function for open Gromov-Witten invariants. A corollary of Theorems \ref{All OGW are computable 1 Fellowship of the OGW} and \ref{All OGW are computable 2 two OGW two computable} is that in such cases, and under the specified topological assumptions, if the sets of initial values are known, then all open Gromov-Witten invariants can be computed.
	
	\subsection{Method of proof} The proof of Theorems \ref{All OGW are computable 1 Fellowship of the OGW} and \ref{All OGW are computable 2 two OGW two computable} is to be carried out in several steps. First, we prove a lemma that shows that certain closed Gromov-Witten invariants, with some specific degrees, vanish. We then define a partial order on the set $A$ defined in \eqref{set of OGWs}, and prove four reduction lemmas. Each lemma states that elements of a certain subset of $A$ may be computed via elements of strictly lower order. A main tool for the proof of the reduction lemmas is the open WDVV (Witten–Dijkgraaf–Verlinde–Verlinde) relations, which is a set of partial differential equations satisfied by the Frobenius superpotential $\Omega$. Theorems \ref{All OGW are computable 1 Fellowship of the OGW} and \ref{All OGW are computable 2 two OGW two computable} follow from the reduction lemmas.
	
	It is worthwhile to note that, while Theorems \ref{All OGW are computable 1 Fellowship of the OGW} and \ref{All OGW are computable 2 two OGW two computable} make an analogous statement to Theorem \ref{Konsevitch-Manin}, the proofs of Theorems \ref{All OGW are computable 1 Fellowship of the OGW} and \ref{All OGW are computable 2 two OGW two computable} are more involved. The idea of the proof of Theorem \ref{Konsevitch-Manin} was the following -- take some closed Gromov-Witten invariant $\GW_{\beta}\big(\gamma_1\otimes\dots\otimes\gamma_{{\ell}}\big)$, for some degree $\beta\in H_2(X;\ZZ)$ and constraints $\gamma_1,\dots,\gamma_\ell\in H^*(X;\RR)$. It is assumed that the second cohomology $H^2(X;\RR)$ generates the absolute cohomology ring $H^*(X;\RR)$. Therefore, we may generally decompose one of the interior constraints into a product of a divisor and a cohomology class of lower degree. Using the Gromov-Witten axioms, notably among them the WDVV relations (see Section \ref{Closed GW Invariants} for more details), one can then show a relation between $\GW_{\beta}\big(\gamma_1\otimes\dots\otimes\gamma_{{\ell}}\big)$ and Gromov-Witten invariants with constraints of lower degree.
	
	In the open theory, one can deal with the interior constraints of the open Gromov-Witten invariants in a similar manner, and indeed the method of proof for Theorem \ref{All OGW are computable 1 Fellowship of the OGW} is similar to the method of proof for Theorem \ref{Konsevitch-Manin}. However, the boundary constraints of the open Gromov-Witten invariants do not admit such a decomposition. This sets an obstruction on Konsevitch and Manin's original method of proof. The additional assumption in the statement of Theorem \ref{All OGW are computable 2 two OGW two computable} allows us to circumvent this obstruction.
	
	\subsection{Application}

	We apply Theorem \ref{All OGW are computable 1 Fellowship of the OGW} to the special case of
	\[
	(X,L) = (\CC P^{n_1}\times\CC P^{n_2},\RR P^{n_1}\times\RR P^{n_2})
	\]
	with $n_1,n_2>1$ and $n_1,n_2 \equiv 1\pmod 2$, with respect to the product symplectic and complex structures.
	In Section~\ref{products of projective spaces} we
	go through the computational steps from Theorem \ref{All OGW are computable 1 Fellowship of the OGW} to verify that a Frobenius superpotential on $(X,L)$ will be trivial. The precise statement is given in Propositions~\ref{OGWs Vanish for n1 n2 are 5mod6} and~\ref{OGWs Vanish for n1 n2 are 1mod2 and pointlike}.

	\begin{Remark}
		In the closed case, there is a product formula that allows to compute the closed Gromov-Witten invariants of a product of two manifolds. This formula is stated and proved in \cite{behrend1997product}. In the open case, the open Gromov-Witten invariants of $(\CC P^{n_i}, \RR P^{n_i})$ were computed and shown to not vanish in \cite{SolomonTukachinsky2019relative}. The result above hints that perhaps there is no straightforward generalization of said product formula for open Gromov-Witten invariants.
		
	\end{Remark}
	
	\subsection{Acknowledgements} We would like to thank Peleg Bar-Lev and Daniel Tsodikovich for helpful conversations.
	We are thankful to Jake Solomon for pointing out a mistake in our original argument in Section~\ref{products of projective spaces}.
	We were partially supported by the ISF grant No. 2793/21. The second author was partially supported by the Colton Foundation.
	
	\section{Topological preliminaries}\label{sec: Topological Preliminaries}
	
	\subsection{De Rham cohomology and relative cohomology}
	We write $A^*(X;\RR)$ for the de-Rham complex of differential forms on $X$. Define the subcomplex $\hat{A}^*(X,L;\RR)\subseteq A^*(X;\RR)$ by
	\[ \hat{A}^*(X,L;\RR):=\left\{\eta\in A^*(X;\RR)\;\middle|\; \int_L\eta = 0\right\}. \]
	The exterior differential $d$ makes $\hat{A}^*(X,L;\RR)$ a cochain complex. Let $H^*(X,L;\RR)$ be the cohomology of the complex $\hat{A}^*(X,L;\RR)$. Denote by $\mathfrak{i}_\RR:H^*(X;\RR)\rightarrow\RR$ the map given by integration over $L$. Then we have a short exact sequence:
	\[ 0\longrightarrow \hat{A}^*(X,L;\RR) \longrightarrow A^*(X;\RR) \overset{\mathfrak{i}_\RR}{\longrightarrow} \RR \longrightarrow 0. \]
	By the Snake Lemma, this induces the long exact sequence:
	\begin{equation}\label{Relative long exact sequence}
		\begin{tikzcd}
			H^*(X,L;\RR) \arrow[rr, "\rho"]&			 & H^*(X;\RR)\arrow[dl,"\mathfrak{i}_\RR"] \\
			&\RR\arrow[ul, "y"] &
		\end{tikzcd},
	\end{equation}
	where $\rho:H^*(X,L;\RR)\rightarrow H^*(X,\RR)$ is the map induced by the inclusion
	\[ \hat{A}^*(X,L;\RR)\hookrightarrow A^*(X;\RR), \]
	and $y:\RR\rightarrow H^*(X,L;\RR)$ is the connecting homomorphism for the long exact sequence.
	
	\subsection{A choice of bases for relative and absolute cohomologies}\label{Choice of Bases for Relative and Absolute Cohomologies}
	Let $\left\{\Delta_0,\dots,\Delta_K\right\}$ be an ordered homogeneous basis for the absolute cohomology ring $H^*(X;\RR)$. Write $|\Delta_i|=j_i$ where $\Delta_i\in H^{j_i}(X;\RR)$.
	
	We have two distinct cases to consider, depending on the homology class of $L$ in $H_*(X;\RR)$. If $[L]=0$ in $H_n(X;\RR)$, then $\mathfrak{i}_\RR = 0$. Therefore $\ker(\mathfrak{i}_\RR)=H^*(X;\RR)$ and the long exact sequence \eqref{Relative long exact sequence} splits into
	\begin{equation}\label{L=0}
		H^*(X,L;\RR) \cong \text{Im}\rho\oplus\ker\rho = \ker(\mathfrak{i}_\RR)\oplus\text{Im}(y) = H^*(X;\RR)\oplus Span(y(1)).
	\end{equation}
	On the other hand, if $[L]\neq 0$ in $H_n(X;\RR)$, then $\text{Im}(\mathfrak{i}_\RR)=\RR$. Therefore, $\ker(y) = \text{Im}(\mathfrak{i}_\RR) =\RR$ and so $\text{Im}(y)=0$. In that case,
	\begin{equation*}
		H^*(X,L;\RR) \cong \text{Im}(\rho)\oplus\ker(\rho)=\text{Im}(\rho)\oplus\text{Im}(y) = \text{Im}(\rho),
	\end{equation*}
	and the long exact sequence \eqref{Relative long exact sequence} splits into
	\begin{equation}\label{L neq 0}
		H^*(X;\RR)\cong \ker(\mathfrak{i}_\RR)\oplus\text{Im}(\mathfrak{i}_\RR) = \text{Im}\left(\rho\right)\oplus\text{Im}(\mathfrak{i}_\RR) = H^*(X,L;\RR)\oplus\RR,
	\end{equation}
	where $\text{Im}(\mathfrak{i}_\RR)$ is generated by integration on $\text{PD}([L])$, the Poincar\'{e} dual to $[L]$.\\
	By \eqref{L=0} and \eqref{L neq 0}, we may choose a basis $\left\{\Gamma_0,\dots,\Gamma_N\right\}$ for $H^*(X,L;\RR)$ in the following way:
	\begin{enumerate}
		\item If $[L]=0$ in $H^*(X;\RR)$, then $N=K+1$ and we may identify $\Gamma_i=\Delta_i$ for all $0\leq i\leq K$ and take $\Gamma_\diamond=\Gamma_{K+1}=y(1)$. Thus, $\left\{\Gamma_0,\dots,\Gamma_K,\Gamma_\diamond\right\}$ is a basis for $H^*(X,L;\RR)$.
		\item If $[L]\neq 0$ in $H^*(X;\RR)$, then $N=K-1$ and $H^*(X,L;\RR)$ may be viewed as a subspace of $H^*(X;\RR)$. In this case, without loss of generality, $H^*(X,L;\RR)$ is spanned by $\left\{\Delta_0,\dots,\Delta_{K-1}\right\}$ and $\Delta_K = \text{PD}([L])$. Thus, if we denote $\Gamma_i=\Delta_i$ for all $0\leq i\leq K-1$, then $\left\{\Gamma_0,\dots,\Gamma_{K-1}\right\}$ is a basis for $H^*(X,L;\RR)$.
	\end{enumerate}
	In any case, we write $\kappa = \min\{K,N\}$.
	
	Let
	\[
	P_\RR:H^*(X,L;\RR)\rightarrow\text{Coker}(\mathfrak{i}_\RR)
	\]
	be a homomorphism that is left inverse to
	\[
	\bar{y}:\text{Coker}(\mathfrak{i}_\RR)\rightarrow H^*(X,L;\RR),
	\]
	induced by $y$ from \eqref{Relative long exact sequence}. Note that if $[L]\neq 0$ in $H^*(X;\RR)$, then $\text{Im}(\mathfrak{i}_\RR)=\RR$ and $\text{Coker}(\mathfrak{i}_\RR)=0$, so $P_\RR=0$. If $[L]=0$ in $H^*(X;\RR)$, there may be several different choices for $P_\RR$. We take $P_\RR$ that is compatible with the splitting \eqref{L=0}, namely, $P_\RR(\Gamma_\diamond)=1$ and $P_\RR(\Gamma_j)=0$ for all $0\leq j\leq K$. Note that in either case we get that our choice of $P_\RR$ vanishes on $H^*(X;\RR)$.
	
	\subsection{Relative spin structure} Let $\mathfrak{s}_L$ be a relative spin structure on $L$. Let $w_2(TL)$ be the second Steifel-Whitney class of the tangent bundle $TL$. Then $\mathfrak{s}_L$ determines a class $w_s\in H^2(X;\ZZ/2\ZZ)$ that satisfies $w_2(TL)=w_s|_L$. For more details on relative spin structures, see \cite[Definition 8.1.2]{FOOOLagrangianIntersection} and \cite[Definition 3.1.2(c)]{WehrheimWoodwardpseudoholomorphicquilts}.

	\section{Closed Gromov-Witten invariants and the Gromov-Witten axioms}\label{Closed GW Invariants}
	\emph{Genus zero Gromov-Witten invariants of degree $\beta$} are particular homomorphisms
	\[
	\GW_{\beta}:H^*(X;\RR)^{\otimes \ell}\rightarrow\RR,
	\quad \beta\in H_2(X;\ZZ), \ell\in\ZZ_{\geq 0}.
	\]
	While we do not need the particulars of the definition, it is important that $\GW_\beta$ satisfy a set of basic properties known as the Gromov-Witten axioms detailed below (see \cite{KontsevitchManin1994}).
	\begin{Proposition}[The Gromov-Witten Axioms for Closed Gromov-Witten Invariants]
		
		Let $\beta\in H_2(X;\ZZ)$ and $\gamma_1,\dots,\gamma_\ell\in H^*(X;\RR)$. Then the following statements hold:
		\begin{enumerate}[label = (\alph*)]
			\item (\emph{Degree}) $\GW_{\beta}(\gamma_1\otimes\dots\otimes\gamma_\ell) = 0$ unless
			\begin{equation} \label{Closed Degree}
				2n-6+2c_1(\beta)+2\ell = \sum\limits_{i=1}^\ell|\gamma_i|.
			\end{equation}
			
			\item (\emph{Fundamental Class})
			\begin{equation}\label{Closed Fundamental Class}
				\GW_{\beta}\left(1\otimes\gamma_1\otimes\dots\otimes\gamma_{\ell-1}\right) = \begin{cases}
					\int_X\gamma_1\wedge\gamma_2, & (\ell,\beta) = (3,0),\\
					0, &\text{otherwise}.
				\end{cases}
			\end{equation}
			
			\item (\emph{Zero})
			\begin{equation}\label{Closed Zero}
				\GW_{0}\left(\gamma_1,\dots,\gamma_\ell\right) = \begin{cases}
					\int_X\gamma_1\wedge\gamma_2\wedge\gamma_3, & \ell = 3,\\
					0, &\text{otherwise}.
				\end{cases}
			\end{equation}
			
			\item (\emph{Divisor}) If $|\gamma_\ell| = 2$ then
			\begin{equation} \label{Closed Divisor}
				\GW_{\beta}\left(\gamma_1,\dots,\gamma_\ell\right) = \int_\beta\gamma_\ell\cdot\GW_{\beta}\left(\gamma_1\dots,\gamma_{\ell-1}\right).
			\end{equation}
		\end{enumerate}
	\end{Proposition}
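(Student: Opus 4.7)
The proposition collects four foundational axioms for closed genus zero Gromov-Witten invariants. Since the paper treats the $\GW_\beta$ as given data and does not supply an independent construction, my plan is to cite the standard construction (as in Kontsevich-Manin \cite{KontsevitchManin1994} and McDuff-Salamon \cite{Mcduff2004JHolomorphicCA}) and derive each of the four properties from the virtual fundamental class formalism on $\overline{M}_{0,\ell}(X,\beta)$. In what follows I outline how each axiom should follow; the arguments are well-established, so I would proceed by reducing each claim to a standard property of the moduli space and its evaluation and forgetful maps.

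For the \emph{Degree} axiom I would compute the virtual real dimension of $\overline{M}_{0,\ell}(X,\beta)$, which equals $2n - 6 + 2c_1(\beta) + 2\ell$. The pairing $\GW_\beta(\gamma_1\otimes\cdots\otimes\gamma_\ell) = \int_{[\overline{M}_{0,\ell}(X,\beta)]^{vir}} \prod_i ev_i^*\gamma_i$ is then nonzero only when the total form degree $\sum |\gamma_i|$ matches this virtual dimension, which is precisely \eqref{Closed Degree}. For the \emph{Zero} axiom ($\beta=0$), the moduli space of constant stable maps is $\overline{M}_{0,\ell}\times X$ and stability forces $\ell\geq 3$; for $\ell=3$, $\overline{M}_{0,3}$ is a point, so the evaluation maps collapse and the integral becomes the ordinary triple intersection $\int_X \gamma_1\wedge\gamma_2\wedge\gamma_3$, while for $\ell>3$ the form degree on the $\overline{M}_{0,\ell}$ factor fails to match.

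For the \emph{Fundamental Class} and \emph{Divisor} axioms, my plan is to use the forgetful morphism $\pi:\overline{M}_{0,\ell}(X,\beta)\to\overline{M}_{0,\ell-1}(X,\beta)$ that drops the last marked point. For Fundamental Class, when $\gamma_\ell = 1$, the contribution $ev_\ell^*1$ is trivial and the integral along fibers of $\pi$ vanishes by dimension counting, except in the unstable case where $\pi$ fails to exist — namely $(\ell,\beta)=(3,0)$, which reduces via the Zero axiom to $\int_X\gamma_1\wedge\gamma_2$. For Divisor, when $|\gamma_\ell|=2$, pushing $ev_\ell^*\gamma_\ell$ down through $\pi$ yields the scalar $\int_\beta \gamma_\ell$ times the pullback class on $\overline{M}_{0,\ell-1}(X,\beta)$, producing \eqref{Closed Divisor}; the key input here is that the fibers of $\pi$ are generically curves of class $\beta$ in $X$, on which a degree-$2$ class integrates to $\int_\beta\gamma_\ell$.

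The main obstacle — strictly speaking — is that the above reasoning is formal unless one has fixed a rigorous construction of $[\overline{M}_{0,\ell}(X,\beta)]^{vir}$ for a general symplectic target, together with proofs that the forgetful and evaluation maps are compatible with the virtual class in the required way. Since the present paper takes $\GW_\beta$ axiomatically (the axioms are precisely the structure we need), I would not re-derive this machinery. Instead, I would simply observe that for any of the standard constructions — algebraic (Behrend-Fantechi) or symplectic (Li-Tian, Ruan-Tian, Fukaya-Ono) — all four axioms are proved in the cited literature, and I would reference \cite{KontsevitchManin1994, Mcduff2004JHolomorphicCA} accordingly.
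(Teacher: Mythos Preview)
Your proposal is correct and matches the paper's treatment: the paper does not prove this proposition at all but simply states the axioms as known properties with a reference to \cite{KontsevitchManin1994}. Your additional sketches of the underlying geometric arguments are accurate and go beyond what the paper provides, but the bottom line---treat the axioms as established in the literature and cite accordingly---is exactly what the paper does.
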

	
	Furthermore, the closed Gromov-Witten invariants satisfy another property, called the \textit{WDVV equation}, named after Witten–Dijkgraaf–Verlinde–Verlinde. It can be phrased concisely as a family of PDEs using the closed Gromov-Witten potential and the intersection matrix $g$, as explained in the following.
	
	\begin{Notation}\label{gij matrix}
		Let $\left\{\Delta_0,\dots,\Delta_K\right\}$ be the basis for $H^*(X;\RR)$ chosen in Section \ref{sec: Topological Preliminaries}. We denote by $\left(g_{ij}\right)_{0\leq i,j\leq K+1}$ the $(K+1)\times(K+1)$ matrix satisfying
		\begin{equation*}
			g_{ij} = \int_X\Delta_i\wedge\Delta_j
		\end{equation*}
		for all $0\leq i,j\leq K$. We denote by $\left(g^{ij}\right)_{0\leq i,j\leq K}$ the inverse matrix to  $\left(g_{ij}\right)_{0\leq i,j\leq K}$.
	\end{Notation}
	
	\begin{Definition}
		The \textit{closed potential} $\Phi$ is a power series in $Q$, defined as
		\begin{equation}\label{Closed Generating Function}
			\Phi = \sum\limits_{\substack{\beta\in H_2(X;\ZZ)\\ r_i\in\ZZ_{\geq 0}}}(-1)^{w_s(\beta)}T^{\varpi(\beta)}\prod\limits_{i=0}^{K}\frac{t_{K-i}^{r_{K-i}}}{r_{K-i}!}\GW_\beta\big(\bigotimes\limits_{i=0}^{K}\Delta_{K-i}^{\otimes r_{K-i}}\big).
		\end{equation}
	\end{Definition}
	
	\begin{Remark}
		The term $(-1)^{w_s(\beta)}$ does not typically appear in the definition of the closed Gromov-Witten potential. Nevertheless, it is needed when the potential is used in open Gromov-Witten theory.
	\end{Remark}
	
	As stated, the closed potential $\Phi$ satisfies a family of PDEs, called the WDVV equations.
	\begin{Proposition}[Closed WDVV Equations]
		For all $0\leq a,b,c,d\leq K$, the following relation holds:
		\begin{equation}\label{Closed WDVV}
			\sum\limits_{0\leq\mu,\nu\leq K}\partial_a\partial_b\partial_\mu\Phi g^{\mu\nu}\partial_c\partial_d\partial_\nu\Phi = \sum\limits_{0\leq\mu,\nu\leq K}\partial_a\partial_d\partial_\mu\Phi g^{\mu\nu}\partial_b\partial_c\partial_\nu\Phi,
		\end{equation}
		where we use the shorthand notation $\partial_j\Phi = \partial_{t_j}\Phi$.
	\end{Proposition}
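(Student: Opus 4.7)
The plan is to derive the WDVV identity from the splitting (gluing) axiom for genus-zero closed Gromov-Witten invariants, which complements the axioms listed above and is the standard ingredient in proofs of this statement. The splitting axiom says that the contribution of a boundary stratum of $\overline{M}_{0,n}$ parametrizing stable maps whose domain has two genus-zero components meeting at a single node is obtained by summing over insertions of a dual basis, weighted by the bilinear form $g^{\mu\nu}$, at the two sides of the node.

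First, I would expand both sides of \eqref{Closed WDVV} using the definition \eqref{Closed Generating Function} of $\Phi$: a triple derivative $\partial_a\partial_b\partial_\mu\Phi$ is a power series whose $(T^{\varpi(\beta)},t^{\vec r})$-coefficient is proportional to $\GW_\beta\bigl(\Delta_a\otimes\Delta_b\otimes\Delta_\mu\otimes\bigotimes_i\Delta_i^{\otimes r_i}\bigr)$. Contracting $\partial_a\partial_b\partial_\mu\Phi$ with $g^{\mu\nu}\partial_c\partial_d\partial_\nu\Phi$ over $\mu,\nu$ rewrites the left-hand side of \eqref{Closed WDVV} as a sum over pairs $(\beta_1,\vec r_1)$, $(\beta_2,\vec r_2)$ of contributions from reducible genus-zero stable maps whose two components carry $\Delta_a,\Delta_b$ (plus a chosen subset of the ``free'' marked points) and $\Delta_c,\Delta_d$ (plus the remaining free marked points), respectively. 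By the splitting axiom applied to the boundary divisor $D(ab|cd)\subset\overline{M}_{0,n+4}$, this sum equals the integral of the relevant Gromov-Witten class against $[D(ab|cd)]$. The right-hand side of \eqref{Closed WDVV} analogously produces the integral against $[D(ad|bc)]$.

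The final step invokes the classical rational equivalence of the three boundary divisors $D(ab|cd)$, $D(ac|bd)$, and $D(ad|bc)$ on $\overline{M}_{0,4}\cong\CC P^1$, which are simply three distinct points on a projective line. Pulling this equivalence back along the forgetful morphism $\overline{M}_{0,n+4}\to\overline{M}_{0,4}$ gives the equality of the corresponding boundary divisor classes on $\overline{M}_{0,n+4}$, which translates into the identity between the two integrals and hence yields \eqref{Closed WDVV}.

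The principal obstacle will be the combinatorial bookkeeping: when distributing the ``free'' marked points (those carrying the $t_i$ variables) between the two components of the reducible curve, one must verify that the binomial coefficients produced by the distribution combine with the factorials $1/(r_i)!$ in \eqref{Closed Generating Function} to match exactly the coefficients appearing in the product of two potentials. It must also be checked that the sign $(-1)^{w_s(\beta)}$ behaves multiplicatively under decompositions $\beta=\beta_1+\beta_2$, but this is immediate since $w_s$ is a group homomorphism.
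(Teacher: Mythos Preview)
The paper does not actually prove this proposition: it is stated without proof as a known property of the closed Gromov--Witten potential, with the surrounding discussion citing \cite{KontsevitchManin1994}. So there is no ``paper's own proof'' to compare against.

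Your outline is the standard argument for the closed WDVV equations and is correct in spirit. The key ingredients you identify---the splitting axiom for the boundary divisors of $\overline{M}_{0,n}$, the linear equivalence of the three boundary points on $\overline{M}_{0,4}\cong\CC P^1$ pulled back along the forgetful map, and the combinatorial matching of binomial coefficients with the factorials in the generating function---are exactly what is needed. Your remark about the multiplicativity of $(-1)^{w_s(\beta)}$ under $\beta=\beta_1+\beta_2$ is also correct, since $w_s\in H^2(X;\ZZ/2\ZZ)$ is evaluated on homology classes additively. If you were to write this out in full, the only place requiring care is making the ``integral against $[D(ab|cd)]$'' step precise in whatever foundational framework for Gromov--Witten classes you adopt, but that is a matter of citation rather than a gap in the argument.
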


	\section{Frobenius superpotentials}\label{section: Frobenius Superpotential}
	
	\begin{Definition}\label{def: Frobenius superpotential}
		Let
		\[
		\left\{\OGWbar^{(\ell)}_{\beta,k}\;\middle|\; \beta\in \Pi_{\geq 0}, k,\ell\in\ZZ_{\geq 0}\right\}
		\]
		be a family of homomorphisms $\OGWbar^{(\ell)}_{\beta,k}:H^*(X,L;\RR)^{\otimes\ell}\to\RR$. Let
		\begin{equation*}
			\Omega = \sum\limits_{\substack{\beta\in\Pi_{\geq 0}\\ k,\ell\in\ZZ_{\geq 0}}}\sum\limits_{\substack{\invecZN{r}\\ \sumsupto{\ell}{\vec{r}}}}
			T^\beta\frac{s^k}{k!}\prod\limits_{i=0}^{N}\frac{t_{N-i}^{r_{N-i}}}{(r_{N-i})!}
			\OGWbar^{(\ell)}_{\beta,k}\big(\bigotimes\limits_{i=0}^{N}\Gamma_{N-i}^{\otimes r_{N-i}}\big).
		\end{equation*}
		In particular, $\Omega\in R$. Then $\Omega$ is called a \emph{Frobenius superpotential} if the following conditions are satisfied:
		\begin{enumerate}[label=(\roman*)]
			\item (\emph{Open WDVV equations})\label{Open WDVV Relations} For all $a,b,c\in \{0,\dots,N\}\cup\{s\}$, the following PDEs, called the open WDVV equations, hold:
			\begin{align}
				\label{Open WDVV Inner Constraints}
				\sum\limits_{0\leq \mu,\nu\leq \kappa}\partial_a\partial_\mu\Omega g^{\mu\nu}\partial_b\partial_c\partial_\nu\Phi - \partial_a\partial_s\Omega\partial_b\partial_c\Omega
				=& \sum\limits_{0\leq \mu,\nu\leq \kappa}\partial_a\partial_b\partial_\mu\Phi g^{\mu\nu}\partial_c\partial_\nu\Omega - \partial_a\partial_b\Omega\partial_s\partial_c\Omega,\\
				\label{Open WDVV Boundary Constraint}
				-\partial_a\partial_s\Omega\partial_b\partial_s\Omega=&\sum\limits_{0\leq \mu,\nu\leq \kappa}\partial_a\partial_b\partial_\mu\Phi g^{\mu\nu}\partial_s\partial_\nu\Omega-\partial_a\partial_b\Omega\partial_s^2\Omega,
			\end{align}
			where we use the shorthand notation $\partial_j\Omega = \partial_{t_j}\Omega$ for $0\leq j\leq N$.
		\end{enumerate}
		
		For the remaining conditions, let $\beta\in \Pi_{\geq 0}$ and $k,\ell\in\ZZ_{\geq 0}$, and $\gamma_1,\dots,\gamma_{{\ell}}\in \left\{\Gamma_0,\dots,\Gamma_N\right\}$.
		
		\begin{enumerate}[resume, label=(\roman*)]
			\item  (\emph{Energy Gap}) If $\omega(\beta)<\hbar$ and $\beta\neq \beta_0$, then
			\begin{equation}\label{Energy Gap}
				\OGWbar^{(\ell)}_{\beta,k}(\gamma_1\otimes\dots\otimes\gamma_\ell)=0.
			\end{equation}
			
			\item (\emph{Symmetry}) Let $\sigma$ be a permutation in $S_\ell$. Then
			\begin{equation}\label{Open Symmetry}
				\OGWbar^{(\ell)}_{\beta,k}\left(\gamma_1\otimes\dots\otimes\gamma_\ell\right) = (-1)^{s_\sigma(\gamma)}\OGWbar^{(\ell)}_{\beta,k}\left(\gamma_{\sigma(1)}\otimes\dots\otimes\gamma_{\sigma(\ell)}\right),
			\end{equation}
			where $s_\sigma(\gamma) := \sum\limits_{i>j,\sigma(i)<\sigma(j)}\left|\gamma_{\sigma(i)}\right|\cdot\left|\gamma_{\sigma(j)}\right|$.
			
			\item (\emph{Degree}) $\OGWbar^{(\ell)}_{\beta,k}(\gamma_1\otimes\dots\otimes\gamma_\ell) = 0$ unless
			\begin{equation} \label{Open Degree}
				n-3+\mu(\beta)+2\ell+k =  kn + \sum\limits_{i=1}^\ell|\gamma_i|.
			\end{equation}
			
			\item (\emph{Fundamental Class}) Assume $\ell\geq 1$. Then
			\begin{equation}\label{Open Fundamental Class}
				\OGWbar^{(\ell)}_{\beta,k}\left(1\otimes\gamma_1\otimes\dots\otimes\gamma_{\ell-1}\right) = \begin{cases}
					-1, & (\ell,\beta,k) = (1,\beta_0,1),\\
					P_\RR(\gamma_1), &(\ell,\beta,k) = (2,\beta_0, 0),\\
					0, &\text{otherwise}.
				\end{cases}
			\end{equation}
			
			\item (\emph{Zero})
			\begin{equation}\label{Open Zero}
				\OGWbar^{(\ell)}_{\beta_0,k}\left(\gamma_1\otimes\dots\otimes\gamma_\ell\right) = \begin{cases}
					-1, & (\ell,k) = (1,1)\text{ and } \gamma_1 = 1,\\
					P_\RR(\gamma_1\wedge\gamma_2), & (\ell,k) = (2,0),\\
					0, &\text{otherwise}.
				\end{cases}
			\end{equation}
			
			\item (\emph{Divisor}) If $\ell\geq 1$ and $|\gamma_\ell| = 2$, then
			\begin{equation} \label{Open Divisor}
				\OGWbar^{(\ell)}_{\beta,k}\left(\gamma_1\otimes\dots\otimes\gamma_\ell\right) = \int_\beta\gamma_\ell\cdot \OGWbar^{(\ell-1)}_{\beta,k}\left(\gamma_1\otimes\dots\otimes\gamma_{\ell-1}\right).
			\end{equation}
			
			\item (\emph{Wall-Crossing}) If $L$ is homologically trivial in $X$, then the following wall-crossing formula holds:
			\begin{equation}\label{Wall Crossing}
				\OGWbar^{(\ell+1)}_{\beta,k}(\Gamma_\diamond\otimes\gamma_1\otimes\dots\otimes\gamma_\ell) = -\OGWbar^{(\ell)}_{\beta,k+1}\left(\gamma_1\otimes\dots\otimes\gamma_\ell\right).
			\end{equation}
			
		\end{enumerate}
	\end{Definition}

	\begin{Remark}\label{Remark on Omega'}
		It is worthwhile to note that one can define the auxiliary superpotential $\Omega' = \Omega|_{s=0}$. Explicitly,
    \begin{equation*}
			\Omega' = \sum\limits_{\substack{\beta\in\Pi_{\geq 0}\\ \ell\in\ZZ_{\geq 0}}}\sum\limits_{\substack{\invecZN{r}\\ \sumsupto{\ell}{\vec{r}}}}
			T^\beta\prod\limits_{i=0}^{N}\frac{t_{N-i}^{r_{N-i}}}{(r_{N-i})!}
			\OGWbar^{(\ell)}_{\beta,0}\big(\bigotimes\limits_{i=0}^{N}\Gamma_{N-i}^{\otimes r_{N-i}}\big).
		\end{equation*}
Equation~\eqref{Open WDVV Inner Constraints} implies that $\Omega'$ satisfies the following PDE for all $0\leq a,b,c\leq N$:
		\begin{equation} \label{Open WDVV Inner Constraints Omega'}
			\sum\limits_{0\leq\mu,\nu\leq \kappa}\partial_a\partial_\mu\Omega'g^{\mu\nu}\partial_b\partial_c\partial_\nu\Phi = \sum\limits_{0\leq \mu,\nu\leq \kappa}\partial_a\partial_b\partial_\mu\Phi g^{\mu\nu}\partial_c\partial_\nu\Omega'.
		\end{equation}
		Thus, analogous axioms hold for $\Omega'$, with the open WDVV equations~ \eqref{Open WDVV Inner Constraints} and~\eqref{Open WDVV Boundary Constraint} replaced with~\eqref{Open WDVV Inner Constraints Omega'}, and the zero~\eqref{Open Zero} and fundamental class~\eqref{Open Fundamental Class} axioms replaced with:
\begin{enumerate}[label=(\roman*')]\setcounter{enumi}{4}
  \item (\emph{Fundamental Class}) Assume $\ell\geq 1$. Then
			\begin{equation*}
				\OGWbar^{(\ell)}_{\beta,0}\left(1\otimes\gamma_1\otimes\dots\otimes\gamma_{\ell-1}\right) = \begin{cases}
					P_\RR(\gamma_1), &(\ell,\beta) = (2,\beta_0),\\
					0, &\text{otherwise}.
				\end{cases}
			\end{equation*}
			
			\item (\emph{Zero})
			\begin{equation*}
				\OGWbar^{(\ell)}_{\beta_0,0}\left(\gamma_1\otimes\dots\otimes\gamma_\ell\right) = \begin{cases}
					P_\RR(\gamma_1\wedge\gamma_2), & \ell=2,\\
					0, &\text{otherwise}.
				\end{cases}
			\end{equation*}
\end{enumerate}
We refer to $\Omega'$ with its modified axioms as a \textit{specialized Frobenius superpotential}. We remark that the proof of Theorem~\ref{All OGW are computable 1 Fellowship of the OGW} holds for the specialized case, while Theorem~\ref{All OGW are computable 2 two OGW two computable} becomes redundant.

	\end{Remark}
	
	\begin{Definition}
		Let $\Omega$ be as in Definition \ref{def: Frobenius superpotential}. Let $\OGWbar^{(\ell)}_{\beta,k}\big(\gamma_1\otimes\dots\otimes\gamma_\ell\big)$ be a coefficient of $\Omega$, for some $\beta\in\Pi_{\geq 0}$, some $k,\ell\in\ZZ_{\geq 0}$, and some $\gamma_1,\dots,\gamma_\ell\in H^*(X,L;\RR)$. We say:
		\begin{itemize}
			\item $\beta$ is the \emph{degree} of the coefficient.
			\item $k$ is the \emph{number of boundary constraints} of the coefficient.
			\item $\ell$ is the \emph{number of inner constraints} of the coefficient.
			\item $\gamma_1,\dots,\gamma_\ell$ are the \emph{inner constraints} of the coefficient.
		\end{itemize}
	\end{Definition}
	
	\subsection{Geometric realization}\label{subsection:Geometric Realization}
Various methods are used in the literature to define open Gromov-Witten invariants in various setups.
Notably,
\cite{Solomon2006Intersection} and~\cite{Georgieva2016OGWDiskInvariants} define invariants directly via integration on moduli spaces;
\cite{Fukaya2011CountingPseudoHolomorphic} defines invariants via a superpotential using bounding cochains that are 1-forms;
\cite{SolomonTukachinsky2021PointLikeBoundingChain} and~\cite{SolomonTukachinsky2019relative}
define invariants via a superpotential using point-like bounding pairs.
Whenever open Gromov-Witten invariants are defined, it is reasonable to expect that their generating potential will be either a Frobenius superpotential or a specialized Frobenius superpotential (see Remark~\ref{Remark on Omega'}). This expectation is discussed, for example, in \cite{HahnWalkerAModelImplications}, and it holds for all the definitions that exist in current literature.
	
	\begin{Remark}
		Since $\Omega$ is meant to be a symplectic invariant, it is natural to add the following axiom.
		\begin{enumerate}[label=(\roman*)]\setcounter{enumi}{8}
			\item\label{Deformation invariance}
			(Deformation invariance) The invariants $\OGW$ remain constant under deformations of the symplectic form $\omega$ for which $L$ remains Lagrangian and the fixed subgroup $S_L \le H_2(X, L; \ZZ)$ remains in the kernel of $\omega$.
		\end{enumerate}
		We do not use this property in proving the main theorems, but it comes up in applications in Proposition~\ref{OGWs Vanish for n1 n2 are 1mod2 and pointlike}.
	\end{Remark}

	\section{Results}\label{Results}
	
	\subsection{Topological observations}
	It is easy to verify that $\omega(\varpi(\beta))=\omega(\beta)$. Thus, Proposition~\ref{hbar Proposition} implies the following observation.
	\begin{Lemma}\label{If varpi(beta)=0 then beta=0}
		Let $0\neq\beta\in H_2(X;\ZZ)$, and assume $\omega(\varpi(\beta))<\hbar$. Then for all $\gamma_1,\dots,\gamma_\ell\in H^*(X;\RR)$, the closed Gromov-Witten invariant  $\GW_\beta\left(\gamma_1\otimes\dots\otimes\gamma_\ell\right)$ vanishes.
	\end{Lemma}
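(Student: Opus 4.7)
The plan is to unpack the hypothesis and reduce to an emptiness statement for moduli of $J$-holomorphic spheres, then invoke Proposition~\ref{hbar Proposition}(2) and $\omega$-tameness. The remark preceding the lemma already supplies the key compatibility $\omega(\varpi(\beta)) = \omega(\beta)$, so the assumption $\omega(\varpi(\beta)) < \hbar$ immediately translates to the symplectic-area bound $\omega(\beta) < \hbar$ on the class $\beta \in H_2(X;\ZZ)$ itself.

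The next step is to rule out the existence of any $J$-holomorphic sphere $u:S^2 \to X$ with $u_*[S^2] = \beta$. Split into two cases according to the sign of $\omega(\beta)$. If $\omega(\beta) \leq 0$, then since $J$ is $\omega$-tame, every non-constant $J$-holomorphic sphere has strictly positive symplectic area; combined with $\beta \neq 0$, this forces the moduli space of such spheres to be empty. If $\omega(\beta) > 0$, then $0 < \omega(\beta) < \hbar$, which directly contradicts the energy gap in Proposition~\ref{hbar Proposition}(2), so again no such sphere can exist.

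Finally, one passes from the emptiness of the moduli space to the vanishing of the invariant. For genus $0$ closed Gromov-Witten theory, if for some choice of $\omega$-tame $J$ the moduli space of stable $J$-holomorphic maps of class $\beta$ is empty, then the associated (virtual) count $\GW_\beta(\gamma_1 \otimes \cdots \otimes \gamma_\ell)$ vanishes for arbitrary insertions $\gamma_1, \dots, \gamma_\ell \in H^*(X;\RR)$; this is invariance of $\GW$ under the choice of tame almost complex structure. Combining the two cases above yields the conclusion.

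The only mildly subtle point is the last step, where one must justify the implication "empty moduli space implies vanishing invariant" at the level of the definition being used; however, this is a standard feature of any construction of genus $0$ closed Gromov-Witten invariants and requires no new input here. The argument is otherwise a direct application of the cited proposition.
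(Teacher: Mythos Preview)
Your proposal is correct and follows the same line as the paper, which simply records the identity $\omega(\varpi(\beta))=\omega(\beta)$ and then says Proposition~\ref{hbar Proposition} implies the lemma. You have spelled out the details the paper leaves implicit---the case split on the sign of $\omega(\beta)$, the appeal to $\omega$-tameness for $\omega(\beta)\le 0$, and the passage from emptiness of the moduli space (including nodal configurations, whose components are again controlled by Proposition~\ref{hbar Proposition}) to vanishing of $\GW_\beta$---but the underlying argument is the same.
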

	
	Now, recall the definition of the intersection matrix $(g_{ij})$ from Notation~\ref{gij matrix} and recall from Section \ref{Choice of Bases for Relative and Absolute Cohomologies} that $\kappa = \min\{K,N\} \le K$.
	
	\begin{Lemma}\label{g is block matrix when needed}
		Let $\gamma\in H^*(X,L;\RR)\cap H^*(X;\RR)$. Then
		\[ \sum\limits_{0 \leq\mu,\nu\leq \kappa}\left(\int_X \gamma\wedge\Delta_\mu\right)g^{\mu\nu}\Delta_\nu = \gamma. \]
	\end{Lemma}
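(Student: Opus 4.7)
The approach is to split into the two cases of Section~\ref{Choice of Bases for Relative and Absolute Cohomologies} according to whether $[L]=0$ in $H_n(X;\RR)$, and in the nontrivial case to exploit a block-diagonal structure of the intersection matrix $(g_{ij})$ induced by the relative condition $\int_L\eta=0$.

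In the case $[L]=0$, we have $\kappa=K$ and $H^*(X;\RR)\subseteq H^*(X,L;\RR)$, so the intersection $H^*(X,L;\RR)\cap H^*(X;\RR)$ is all of $H^*(X;\RR)$ and the sum runs over the full index set $\{0,\dots,K\}$. Expanding $\gamma=\sum_{i=0}^{K}c_i\Delta_i$ and using $\sum_\mu g_{i\mu}g^{\mu\nu}=\delta_i^\nu$, the identity follows at once from the nondegeneracy of the Poincar\'e pairing.

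In the case $[L]\neq 0$, we have $\kappa=K-1$, $\Delta_K=\text{PD}([L])$, and $H^*(X,L;\RR)\subseteq H^*(X;\RR)$ is identified with $\mathrm{span}\{\Delta_0,\dots,\Delta_{K-1}\}$. The apparent subtlety is that the sum omits the row and column indexed by $K$. The key step is the observation that for $0\leq i\leq K-1$,
\[
g_{iK}=\int_X\Delta_i\wedge\text{PD}([L])=\int_L\Delta_i=0,
\]
where the last equality holds because $\Delta_i=\Gamma_i$ admits a representative in $\hat{A}^*(X,L;\RR)$. Thus $(g_{ij})$ is block diagonal with blocks indexed by $\{0,\dots,K-1\}$ and $\{K\}$, its inverse $(g^{\mu\nu})$ has the same block structure, and the restriction $(g^{\mu\nu})_{0\leq\mu,\nu\leq K-1}$ coincides with the inverse of $(g_{\mu\nu})_{0\leq\mu,\nu\leq K-1}$. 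The same reasoning applied to $\gamma$ itself yields $\int_X\gamma\wedge\Delta_K=\int_L\gamma=0$, so extending the restricted sum to the full range $0\leq\mu,\nu\leq K$ adds only zero contributions, and the full sum collapses to $\gamma$ by Poincar\'e duality exactly as in the first case.

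There is no deep obstacle here; the only point requiring attention is the vanishing $\int_L\Delta_i=0$ for $i\leq K-1$, which is immediate from the definition of $H^*(X,L;\RR)$ as the cohomology of $\hat{A}^*(X,L;\RR)$. Everything else is a routine application of Poincar\'e duality and the block-matrix structure.
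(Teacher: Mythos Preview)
Your proof is correct and follows essentially the same approach as the paper: both split into the two cases $[L]=0$ and $[L]\neq 0$, and in the second case both use Poincar\'e duality together with the defining condition $\int_L\Delta_i=0$ for $i\leq K-1$ to obtain $g_{iK}=0$, then extend the truncated sum to the full range $0\leq\mu,\nu\leq K$ and apply $g\cdot g^{-1}=I$. The only cosmetic difference is that you phrase the extension via the block-diagonal structure of $(g^{\mu\nu})$, whereas the paper leaves this implicit in its ``Therefore''.
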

	\begin{proof}
		By linearity, it is enough to show that for any $0\leq a\leq \kappa$, we have
		\[ \sum\limits_{0 \leq\mu,\nu\leq \kappa}\left(\int_X \Delta_a\wedge\Delta_\mu\right)g^{\mu\nu}\Delta_\nu = \Delta_a. \]
		There are two possible cases:
		\begin{enumerate}
			\item {$[L] = 0$ in $X$:} Then $\kappa = K$. In this case:
			\begin{align*}
				\sum\limits_{0 \leq\mu,\nu\leq \kappa}\left(\int_X \Delta_a\wedge\Delta_\mu\right)g^{\mu\nu}\Delta_\nu &= \sum\limits_{0 \leq\mu,\nu\leq K}\left(\int_X \Delta_a\wedge\Delta_\mu\right)g^{\mu\nu}\Delta_\nu\\
				&= \sum\limits_{0 \leq\mu,\nu\leq K}g_{a\mu}g^{\mu\nu}\Delta_\nu\\
				&= \sum\limits_{0 \leq \nu\leq K}\delta_{a\nu}\Delta_\nu\\
				&=\Delta_a,
			\end{align*}
			where $\delta_{a\nu}$ is the Kronecker delta.
			\item  {$[L] \neq 0$ in $X$:} Then $\kappa = K-1$ and $\Delta_K = \text{PD}([L])$. By Poincar\'e duality, for any $0\leq\mu\leq K-1$, we have
			\[ \int_X\Delta_\mu\wedge\Delta_K = \int_L\Delta_\mu = 0, \]
			since we identified $\Delta_\mu$ with $\Gamma_\mu\in H^*(X,L;\RR)$. Therefore,
			\begin{align*}
				\sum\limits_{0 \leq\mu,\nu\leq \kappa}\left(\int_X \Delta_a\wedge\Delta_\mu\right)g^{\mu\nu}\Delta_\nu &= \sum\limits_{0 \leq\mu,\nu\leq K-1}\left(\int_X \Delta_a\wedge\Delta_\mu\right)g^{\mu\nu}\Delta_\nu\\  &=\sum\limits_{0 \leq\mu,\nu\leq K}\left(\int_X \Delta_a\wedge\Delta_\mu\right)g^{\mu\nu}\Delta_\nu\\
				&= \sum\limits_{0 \leq\mu,\nu\leq K}g_{a\mu}g^{\mu\nu}\Delta_\nu\\
				&= \sum\limits_{0 \leq\nu\leq K}\delta_{a\nu}\Delta_\nu\\
				&=\Delta_a,
			\end{align*}
			where $\delta_{a\nu}$ is the Kronecker delta.
		\end{enumerate}
		In either case, the statement holds.
	\end{proof}
	
	\subsection{Power series expansion of the open WDVV relations}
	We will now expand the open WDVV relations (see Definition \ref{def: Frobenius superpotential}) into two relations between elements in $R$. These relations will allow us later to extract useful information on the coefficients of the Frobenius superpotential.
	
	Let $\Omega$ be a Frobenius superpotential, and write $\Omega$ as in equation \eqref{Open Generating Function}. For the first relation, let $a,b,c\in \{0,\dots,N\}$. Substitute the terms in \eqref{Closed Generating Function} and \eqref{Open Generating Function} into the first open WDVV equation, \eqref{Open WDVV Inner Constraints}, and get the following relation (using Notation~\ref{Notation Convention for Frobenius}).
	
	\begin{align}\label{Open WDVV Inner Constraints Polynomial Before Expansion}
		\sum\limits_{0\leq \mu,\nu\leq \kappa}\partial_a\partial_\mu\big(\sum\limits_{\substack{\beta\in \Pi_{\geq 0}\\ k,\ell\in\ZZ_{\geq 0}}}\sum\limits_{\substack{\invecZN{r}\\ \sumsupto{\ell}{\vec{r}}}}&T^\beta\frac{s^k}{k!}\prod\limits_{i=0}^{N}\frac{t_{N-i}^{r_{N-i}}}{r_{N-i}!}\OGWbar_{\beta,k}^{\parentheses{\ell}}\big(\bigotimes\limits_{i=0}^{N}\Gamma_{N-i}^{\otimes r_{N-i}}\big)\big)g^{\mu\nu}\\
		\notag\cdot\partial_b\partial_c\partial_\nu\big(\sum\limits_{\substack{\beta\in H_2(X;\ZZ)\\ \invecZK{r}}}(-1)^{w_s(\beta)}&T^{\varpi(\beta)}\prod\limits_{i=0}^K\frac{t_{K-i}^{r_{K-i}}}{r_{K-i}!}\GW_\beta\big(\bigotimes\limits_{i=0}^K\Delta_{K-i}^{\otimes r_{K-i}}\big)\big)\\
		\notag-\partial_a\partial_s\big(\sum\limits_{\substack{\beta\in\Pi_{\geq 0}\\ k,\ell\in\ZZ_{\geq 0}}}\sum\limits_{\substack{\invecZN{r}\\ \sumsupto{\ell}{\vec{r}}}}&T^\beta\frac{s^k}{k!}\prod\limits_{i=0}^N\frac{t_{N-i}^{r_{N-i}}}{r_{N-i}!}\OGWbar_{\beta,k}^{\parentheses{\ell}}\big(\bigotimes\limits_{i=0}^N\Gamma_{N-i}^{\otimes r_{N-i}}\big)\big)\\
		\notag\cdot\partial_b\partial_c\big(\sum\limits_{\substack{\beta\in\Pi_{\geq 0}\\ k,\ell\in\ZZ_{\geq 0}}}\sum\limits_{\substack{\invecZN{r}\\ \sumsupto{\ell}{\vec{r}}}}&T^\beta\frac{s^k}{k!}\prod\limits_{i=0}^N\frac{t_{N-i}^{r_{N-i}}}{r_{N-i}!}\OGWbar_{\beta,k}^{\parentheses{\ell}}\big(\bigotimes\limits_{i=0}^N\Gamma_{N-i}^{\otimes r_{N-i}}\big)\big)\\
		\notag=\sum\limits_{0\leq \mu,\nu\leq \kappa}\partial_a\partial_b\partial_\mu\big(\sum\limits_{\substack{\beta\in H_2(X;\ZZ)\\ \invecZK{r}}}(-1)^{w_s(\beta)}&T^{\varpi(\beta)}\prod\limits_{i=0}^K\frac{t_{K-i}^{r_{K-i}}}{r_{K-i}!}\GW_\beta\big(\bigotimes\limits_{i=0}^K\Delta_{K-i}^{\otimes r_{K-i}}\big)\big)g^{\mu\nu}\\
		\notag\cdot\partial_c\partial_\nu\big(\sum\limits_{\substack{\beta\in\Pi_{\geq 0}\\ k,\ell\in\ZZ_{\geq 0}}}\sum\limits_{\substack{\invecZN{r}\\ \sumsupto{\ell}{\vec{r}}}}&T^\beta\frac{s^k}{k!}\prod\limits_{i=0}^N\frac{t_{N-i}^{r_{N-i}}}{r_{N-i}!}\OGWbar_{\beta,k}^{\parentheses{\ell}}\big(\bigotimes\limits_{i=0}^N\Gamma_{N-i}^{\otimes r_{N-i}}\big)\big)\\
		\notag-\partial_a\partial_b\big(\sum\limits_{\substack{\beta\in\Pi_{\geq 0}\\ k,\ell\in\ZZ_{\geq 0}}}\sum\limits_{\substack{\invecZN{r}\\ \sumsupto{\ell}{\vec{r}}}}&T^\beta\frac{s^k}{k!}\prod\limits_{i=0}^N\frac{t_{N-i}^{r_{N-i}}}{r_{N-i}!}\OGWbar_{\beta,k}^{\parentheses{\ell}}\big(\bigotimes\limits_{i=0}^N\Gamma_{N-i}^{\otimes r_{N-i}}\big)\big)\\
		\notag\cdot\partial_s\partial_c\big(\sum\limits_{\substack{\beta\in\Pi_{\geq 0}\\ k,\ell\in\ZZ_{\geq 0}}}\sum\limits_{\substack{\invecZN{r}\\ \sumsupto{\ell}{\vec{r}}}}&T^\beta\frac{s^k}{k!}\prod\limits_{i=0}^N\frac{t_{N-i}^{r_{N-i}}}{r_{N-i}!}\OGWbar_{\beta,k}^{\parentheses{\ell}}\big(\bigotimes\limits_{i=0}^N\Gamma_{N-i}^{\otimes r_{N-i}}\big)\big).
	\end{align}
	Expanding \eqref{Open WDVV Inner Constraints Polynomial Before Expansion}, we get
	\begin{align}\label{Open WDVV Inner Constraints Polynomial After Expansion}
		\sum\limits_{0\leq\mu,\nu\leq \kappa}\sum\limits_{\substack{\beta_1\in \Pi_{\geq 0}\\ \beta_2\in H_2(X;\ZZ)\\ k,\ell\in\ZZ_{\geq 0}}}\sum\limits_{\substack{\invecZN{r}\\ \invecZK{p}\\ \sumsupto{\ell}{\vec{r}}}}&(-1)^{w_s(\beta_2)}T^{\beta_1+\varpi(\beta_2)}\frac{s^k}{k!}\prod\limits_{\substack{0\leq i\leq N\\ 0\leq j\leq K}}\frac{t_{N-i}^{r_{N-i}}t_{N-j}^{p_{N-j}}}{r_{N-i}!p_{N-j}!}\\
		\notag&\cdot\OGWbar^{\parentheses{\ell}}_{\beta_1,k}\big(\Gamma_\mu\otimes\Gamma_a\otimes\bigotimes\limits_{i=0}^N\Gamma_{N-i}^{\otimes r_{N-i}}\big)\\
		\notag&\cdot g^{\mu\nu}\GW_{\beta_2}\big(\Delta_\nu\otimes\Delta_c\otimes\Delta_b\otimes\bigotimes\limits_{j=0}^K\Delta_{K-j}^{\otimes p_{K-j}}\big)\\
		\notag-\sum\limits_{\substack{\beta_1,\beta_2\in \Pi_{\geq 0},\\ k_1,k_2,\ell_1,\ell_2\in\ZZ_{\geq 0}}}\sum\limits_{\substack{\vec{r},\vec{p}\in\left(\ZZ_{\geq 0}\right)^{N+1}\\ \sumsupto{\ell_1}{\vec{r}},\sumsupto{\ell_2}{\vec{p}}}}&T^{\beta_1+\beta_2}\frac{s^{k_1+k_2}}{k_1!k_2!}\prod\limits_{0\leq i,j\leq N}\frac{t_{N-i}^{r_{N-i}}t_{N-j}^{p_{N-j}}}{r_{N-i}!p_{N-j}!}\\
		\notag&\cdot\OGWbar_{\beta_1,k_1+1}^{\parentheses{\ell_1}}\big(\Gamma_a\otimes\bigotimes\limits_{i=0}^N\Gamma_{N-i}^{\otimes r_{N-i}}\big)\\
		\notag&\cdot\OGWbar_{\beta_2,k_2}^{\parentheses{\ell_2}}\big(\Gamma_c\otimes\Gamma_b\otimes\bigotimes\limits_{j=0}^N\Gamma_{N-j}^{\otimes p_{N-j}}\big)\\
		\notag= \sum\limits_{0\leq \mu,\nu\leq \kappa}\sum\limits_{\substack{\beta_1\in H_2(X;\ZZ)\\ \beta_2\in \Pi_{\geq 0}\\ k,\ell\in\ZZ_{\geq 0}}}\sum\limits_{\substack{\invecZK{r}\\ \invecZN{p}\\ \sumsupto{\ell}{\vec{p}}}}&(-1)^{w_s(\beta_1)}T^{\varpi(\beta_1)+\beta_2}\frac{s^k}{k!}\prod\limits_{\substack{0\leq i\leq K\\ 0\leq j\leq N}}\frac{t_{K-i}^{r_{K-i}}t_{N-j}^{p_{N-j}}}{r_{N-i}!p_{N-j}!}\\
		\notag&\cdot\GW_{\beta_1}\big(\Delta_\mu\otimes\Delta_b\otimes\Delta_a\otimes\bigotimes\limits_{i=0}^K\Delta_{K-i}^{\otimes r_{K-i}}\big)\\
		\notag&\cdot g^{\mu\nu}\OGWbar_{\beta_2,k}^{\parentheses{\ell}}\big(\Gamma_\nu\otimes\Gamma_c\otimes\bigotimes\limits_{j=0}^N\Gamma_{N-j}^{\otimes p_{N-j}}\big)\\
		\notag-\sum\limits_{\substack{\beta_1,\beta_2\in \Pi_{\geq 0},\\ k_1,k_2,\ell_1,\ell_2\in\ZZ_{\geq 0}}}\sum\limits_{\substack{\vec{r},\vec{p}\in\left(\ZZ_{\geq 0}\right)^{N+1}\\ \sumsupto{\ell_1}{\vec{r}},\sumsupto{\ell_2}{\vec{p}}}}&T^{\beta_1+\beta_2}\frac{s^{k_1+k_2}}{k_1!k_2!}\prod\limits_{0\leq i,j\leq N}\frac{t_{N-i}^{r_{N-i}}t_{N-j}^{p_{N-j}}}{r_{N-i}!p_{N-j}!}\\
		\notag&\cdot\OGWbar_{\beta_1,k_1}^{\parentheses{\ell_1}}\big(\Gamma_b\otimes \Gamma_a\otimes\bigotimes\limits_{i=0}^N\Gamma_{N-i}^{\otimes r_{N-i}}\big)\\
		\notag&\cdot\OGWbar_{\beta_2,k_2+1}^{\parentheses{\ell_2}}\big(\Gamma_c\otimes\bigotimes\limits_{j=0}^N\Gamma_{N-j}^{\otimes p_{N-j}}\big).
	\end{align}
	Similarly, for the second WDVV relation, let $a,b\in \{0,\dots,N\}$. Substitute the terms in \eqref{Closed Generating Function} and \eqref{Open Generating Function} into the first open WDVV relation, \eqref{Open WDVV Boundary Constraint}, and get
	\begin{align}\label{Open WDVV Boundary Constraints Polynomial Before Expansion}
		-&\partial_a\partial_s\big(\sum\limits_{\substack{\beta\in\Pi_{\geq 0}\\ k,\ell\in\ZZ_{\geq 0}}}\sum\limits_{\substack{\invecZN{r}\\ \sumsupto{\ell}{\vec{r}}}}T^\beta\frac{s^k}{k!}\prod\limits_{i=0}^N\frac{t_{N-i}^{r_{N-i}}}{r_{N-i}!}\OGWbar_{\beta,k}^{\parentheses{\ell}}\big(\bigotimes\limits_{i=0}^N\Gamma_{N-i}^{\otimes r_{N-i}}\big)\big)\\
		\notag&\qquad\cdot\partial_b\partial_s\big(\sum\limits_{\substack{\beta\in\Pi_{\geq 0}\\ k,\ell\in\ZZ_{\geq 0}}}\sum\limits_{\substack{\invecZN{r}\\ \sumsupto{\ell}{\vec{r}}}}T^\beta\frac{s^k}{k!}\prod\limits_{i=0}^N\frac{t_{N-i}^{r_{N-i}}}{r_{N-i}!}\OGWbar_{\beta,k}^{\parentheses{\ell}}\big(\bigotimes\limits_{i=0}^N\Gamma_{N-i}^{\otimes r_{N-i}}\big)\big)\\
		\notag=\sum\limits_{0\leq \mu,\nu\leq \kappa}&\partial_a\partial_b\partial_\mu\big(\sum\limits_{\substack{\beta\in H_2(X;\ZZ)\\ \invecZK{r}}}(-1)^{w_s(\beta)}T^{\varpi(\beta)}\prod\limits_{i=0}^K\frac{t_{K-i}^{r_{K-i}}}{r_{K-i}!}\GW_\beta\big(\bigotimes\limits_{i=0}^K\Delta_{K-i}^{\otimes r_{K-i}}\big)\big)g^{\mu\nu}\\
		\notag&\qquad\cdot\partial_s\partial_\nu\big(\sum\limits_{\substack{\beta\in\Pi_{\geq 0}\\ k,\ell\in\ZZ_{\geq 0}}}\sum\limits_{\substack{\invecZN{r}\\ \sumsupto{\ell}{\vec{r}}}}T^\beta\frac{s^k}{k!}\prod\limits_{i=0}^N\frac{t_{N-i}^{r_{N-i}}}{r_{N-i}!}\OGWbar_{\beta,k}^{\parentheses{\ell}}\big(\bigotimes\limits_{i=0}^N\Gamma_{N-i}^{\otimes r_{N-i}}\big)\big)\\
		\notag-&\partial_a\partial_b\big(\sum\limits_{\substack{\beta\in\Pi_{\geq 0}\\ k,\ell\in\ZZ_{\geq 0}}}\sum\limits_{\substack{\invecZN{r}\\ \sumsupto{\ell}{\vec{r}}}}T^\beta\frac{s^k}{k!}\prod\limits_{i=0}^N\frac{t_{N-i}^{r_{N-i}}}{r_{N-i}!}\OGWbar_{\beta,k}^{\parentheses{\ell}}\big(\bigotimes\limits_{i=0}^N\Gamma_{N-i}^{\otimes r_{N-i}}\big)\big)\\
		\notag&\qquad\cdot\partial_s^2\big(\sum\limits_{\substack{\beta\in\Pi_{\geq 0}\\ k,\ell\in\ZZ_{\geq 0}}}\sum\limits_{\substack{\invecZN{r}\\ \sumsupto{\ell}{\vec{r}}}}T^\beta\frac{s^k}{k!}\prod\limits_{i=0}^N\frac{t_{N-i}^{r_{N-i}}}{r_{N-i}!}\OGWbar_{\beta,k}^{\parentheses{\ell}}\big(\bigotimes\limits_{i=0}^N\Gamma_{N-i}^{\otimes r_{N-i}}\big)\big).
	\end{align}
	
	Expanding \eqref{Open WDVV Boundary Constraints Polynomial Before Expansion}, we get the following.
	\begin{align}\label{Open WDVV Boundary Constraints Polynomial After Expansion}
		-\sum\limits_{\substack{\beta_1,\beta_2\in \Pi_{\geq 0},\\ k_1,k_2,\ell_1,\ell_2\in\ZZ_{\geq 0}}}\sum\limits_{\substack{\vec{r},\vec{p}\in\left(\ZZ_{\geq 0}\right)^{N+1}\\ \sumsupto{\ell_1}{\vec{r}},\sumsupto{\ell_2}{\vec{p}}}}&T^{\beta_1+\beta_2}\frac{s^{k_1+k_2}}{k_1!k_2!}\prod\limits_{0\leq i,j\leq N}\frac{t_{N-i}^{r_{N-i}}t_{N-j}^{p_{N-j}}}{r_{N-i}!p_{N-j}!}\\
		\notag&\cdot\OGWbar_{\beta_1,k_1+1}^{\parentheses{\ell_1}}\big(\Gamma_a\otimes\bigotimes\limits_{i=0}^N\Gamma_{N-i}^{\otimes r_{N-i}}\big)\\
		\notag&\cdot\OGWbar_{\beta_2,k_2+1}^{\parentheses{\ell_2}}\big(\Gamma_b\otimes\bigotimes\limits_{j=0}^N\Gamma_{N-j}^{\otimes p_{N-j}}\big)\\
		= \sum\limits_{0\leq \mu,\nu\leq \kappa}\sum\limits_{\substack{\beta_1\in H_2(X;\ZZ)\\ \beta_2\in \Pi_{\geq 0}\\ k,\ell\in\ZZ_{\geq 0}}}\sum\limits_{\substack{\invecZK{r}\\ \invecZN{p}\\ \notag\sumsupto{\ell}{\vec{p}}}}&(-1)^{w_s(\beta_1)}T^{\varpi(\beta_1)+\beta_2}\frac{s^k}{k!}\prod\limits_{\substack{0\leq i\leq K\\ 0\leq j\leq N}}\frac{t_{K-i}^{r_{K-i}}t_{N-j}^{p_{N-j}}}{r_{N-i}!p_{N-j}!}\\
		\notag&\cdot\GW_{\beta_1}\big(\Delta_\mu\otimes\Delta_b\otimes\Delta_a\otimes\bigotimes\limits_{i=0}^K\Delta_{K-i}^{\otimes r_{K-i}}\big)\\
		\notag&\cdot g^{\mu\nu}\OGWbar_{\beta_2,k+1}^{\parentheses{\ell}}\big(\Gamma_\nu\otimes\bigotimes\limits_{j=0}^N\Gamma_{N-j}^{\otimes p_{N-j}}\big)\\
		-\sum\limits_{\substack{\beta_1,\beta_2\in \Pi_{\geq 0},\\ k_1,k_2,\ell_1,\ell_2\in\ZZ_{\geq 0}}}\sum\limits_{\substack{\vec{r},\vec{p}\in\left(\ZZ_{\geq 0}\right)^{N+1}\\ \notag\sumsupto{\ell_1}{\vec{r}},\sumsupto{\ell_2}{\vec{p}}}}&T^{\beta_1+\beta_2}\frac{s^{k_1+k_2}}{k_1!k_2!}\prod\limits_{0\leq i,j\leq N}\frac{t_{N-i}^{r_{N-i}}t_{N-j}^{p_{N-j}}}{r_{N-i}!p_{N-j}!}\\
		\notag&\cdot\OGWbar_{\beta_1,k_1}^{\parentheses{\ell_1}}\big(\Gamma_b\otimes \Gamma_a\otimes\bigotimes\limits_{i=0}^N\Gamma_{N-i}^{\otimes r_{N-i}}\big)\\
		\notag&\cdot\OGWbar_{\beta_2,k_2+2}^{\parentheses{\ell_2}}\big(\bigotimes\limits_{j=0}^N\Gamma_{N-j}^{\otimes p_{N-j}}\big).
	\end{align}
	
	Relations \eqref{Open WDVV Inner Constraints Polynomial After Expansion} and \eqref{Open WDVV Boundary Constraints Polynomial After Expansion} will be very useful to us throughout the rest of our proof, as they enable us to relate coefficients of $\Omega$ in various degrees to one another.
	
	\subsection{Proof of Theorems \ref{All OGW are computable 1 Fellowship of the OGW} and \ref{All OGW are computable 2 two OGW two computable}}
	We now turn to the main part of our proof. Let $\Omega$ be as in equation \eqref{Open Generating Function}, and assume that $\Omega$ is a Frobenius superpotential. Assume from now on that the second cohomology $H^2(X;\RR)$ generates the absolute cohomology ring $H^*(X;\RR)$.

	We define a partial order $<$ on the set: \[ B = \left\{(e,k,\ell,a_1,\dots, a_\ell)\in\RR_{\geq 0}\times\ZZ_{\geq 0}\times\left(\bigcup\limits_{\ell=0}^\infty \{\ell\}\times\left(\ZZ_{\geq 0}\right)^{\ell}\right)\;\middle|\;a_1\geq\dots\geq a_\ell\right\},\]
	where we denote $\ZZ^{0} = \{0\}$.
	\begin{Definition}
		Let $(e,k,\ell,a_1,\dots, a_\ell),(e',k',\ell',a'_1,\dots,a'_\ell)\in B$. We say that
		\[
		(e,k,\ell,a_1,\dots,a_\ell)<(e',k',\ell',a'_1,\dots,a'_{\ell'})
		\]
		if any of the following conditions is satisfied:
		\begin{itemize}
			\item $e<e'$.
			\item $e=e'$ and $k<k'$.
			\item $e = e'$, $k=k'$, and $\ell<\ell'$.
			\item $e = e'$, $k=k'$, $\ell=\ell'$, and there exists some $1\leq i\leq \ell$ such that  $a_i<a'_i$ and $a_j=a'_j$ for all $i+1\leq j\leq \ell$.
		\end{itemize}
		Note that the order $<$ is linear.
	\end{Definition}
	
	For convenience, for any element $\alpha = (e,k,\ell,a_1,\dots,a_\ell)\in B$, we say that
	\begin{itemize}
		\item $e$ is the \emph{energy} of $\alpha$.
		\item $k$ is the \emph{number of boundary constraints} of $\alpha$.
		\item $\ell$ is the \emph{number of interior  constraints} of $\alpha$.
		\item $a_i$ is the \emph{degree of the $i$'th interior constraint} of $\alpha$  for any $1\leq i\leq \ell$.
	\end{itemize}

	\begin{Definition}
		We say that a symbol $\OGWbar_{\beta,k}^{\parentheses{\ell}}\big(\Gamma_{i_1}\otimes\dots\otimes\Gamma_{i_\ell}\big)$, where $0\leq i_1,\dots,i_\ell\leq K$ and $\left|\Gamma_{i_1}\right|\geq\dots \ge \left|\Gamma_{i_\ell}\right|$, \textit{has an order} of $(\omega(\beta),k,\ell,\left|\Gamma_{i_1}\right|,\dots,\left|\Gamma_{i_\ell}\right|)$ (note that this assignment is on the symbol, and not the numerical value of $\OGWbar_{\beta,k}^{\parentheses{\ell}}\big(\Gamma_{i_1}\otimes\dots\otimes\Gamma_{i_\ell}\big)$).
	\end{Definition}

	The proof of Theorems \ref{All OGW are computable 1 Fellowship of the OGW} and \ref{All OGW are computable 2 two OGW two computable} is performed via four reduction steps. The reduction steps are as follows:
	\begin{enumerate}
		\item First, we show in Lemma \ref{All OGW are Computable lemma pt. 1} that all coefficients of $\Omega$ may be computed from the coefficients of the form  $\OGWbar_{\beta,k}^{\parentheses{\ell}}\big(\Gamma_{i_1}^{\otimes r_{i_1}}\otimes\dots\otimes\Gamma_{i_\ell}^{\otimes r_{i_\ell}}\big)$, where $\beta\in\Pi_{\geq 0}$, $k$ and $\ell$ are non-negative integers, $i_1,\dots,i_\ell\in\left\{0,\dots,K\right\}$ and $r_{i_1},\dots,r_{i_\ell}$ are non-negative integers.
		\item Next, we show in Lemma \ref{All OGW are Computable lemma pt. 2} that for any coefficient of $\Omega$ of the form $\OGWbar_{\beta,k}^{\parentheses{\ell}}\big(\Gamma_{i_1}\otimes\dots\otimes\Gamma_{i_\ell}\big)$ with $0\leq i_j\leq K$ for any $1\leq j\leq \ell$, if $\ell\geq 2$, then this coefficient  is computable via coefficients of order lower than $(\omega(\beta),k,\ell,\left|\Gamma_{i_1}\right|,\dots\left|\Gamma_{i_\ell}\right|)$ and closed Gromov-Witten invariants.
		\item After this, we show in Lemma \ref{All OGW are Computable lemma pt. 2, 2} that any coefficient of $\Omega$ of the form $\OGWbar_{\beta,k}^{\parentheses{\ell}}(\Gamma_{i_1}\otimes\dots\otimes\Gamma_{i_\ell})$, with $0\leq i_j\leq K$ for any $1\leq j\leq \ell$, if $\ell\geq 1$ and $k\geq 1$, then it is computable via coefficients of order lower than $(\omega(\beta),k,\ell,\left|\Gamma_{i_1}\right|,\dots,\left|\Gamma_{i_\ell}\right|)$ and closed Gromov-Witten invariants.
		\item Finally, we show in Lemma \ref{All OGW are Computable lemma pt. 3} that under the assumptions of Theorem \ref{All OGW are computable 2 two OGW two computable}, any coefficient of $\Omega$ of the form $\OGWbar_{\beta,k}^{\parentheses{0}}$ with $k\geq 2$ is computable via coefficients of order lower than $(\omega(\beta),k,0,0)$ and closed Gromov-Witten invariants.
	\end{enumerate}
	We use the four reduction steps, together with Theorem \ref{Konsevitch-Manin} that shows that the closed Gromov-Witten invariants are computable from an initial set as well, to deduce Theorems \ref{All OGW are computable 1 Fellowship of the OGW} and \ref{All OGW are computable 2 two OGW two computable}.
	
	The first reduction step in the proof states that we may assume without loss of generality that the inner constraints are basis elements of $H^*(X;\RR)$:
	\begin{Lemma}
		\label{All OGW are Computable lemma pt. 1} All coefficients of $\Omega$ are computable via coefficients of the form
		\[
		\OGWbar_{\beta,k}^{\parentheses{\ell}}(\Gamma_{i_1}\otimes\dots\otimes\Gamma_{i_\ell}),
		\]
		with $0\leq i_j\leq K$ for all $1\leq j\leq \ell$ and $\left|\Gamma_{i_1}\right|\geq\dots\geq\left|\Gamma_{i_\ell}\right|$.
	\end{Lemma}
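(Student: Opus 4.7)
The plan is to reduce any coefficient of $\Omega$ to the stated form using only the symmetry axiom \eqref{Open Symmetry} and, when applicable, the wall-crossing axiom \eqref{Wall Crossing}. By inspection of the expansion \eqref{Open Generating Function}, each coefficient is already of the shape $\OGWbar_{\beta,k}^{(\ell)}\bigl(\bigotimes_{i=0}^{N}\Gamma_{N-i}^{\otimes r_{N-i}}\bigr)$ for some $\beta$, $k$, and multiplicities $r_0,\dots,r_N$ with $\ell=\sum_i r_{N-i}$. Two adjustments remain: first, to eliminate any occurrence of $\Gamma_\diamond$ (which can only arise when $[L]=0$ in $H_n(X;\RR)$, in which case $N=K+1$); second, to reorder the remaining factors so that their degrees are non-increasing.

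To handle the first adjustment, I would split into cases according to whether $[L]\ne 0$ or $[L]=0$. In the first case, Section~\ref{Choice of Bases for Relative and Absolute Cohomologies} gives $N=K-1$, so every index $i_j$ automatically satisfies $i_j\in\{0,\dots,K\}$ and no change is needed. In the second case, $L$ is homologically trivial, so the wall-crossing axiom applies. I would first use the symmetry axiom \eqref{Open Symmetry} to permute all $\Gamma_\diamond$ factors into the leading slot (paying the prescribed sign), and then iterate
\[ \OGWbar_{\beta,k}^{(\ell)}\bigl(\Gamma_\diamond\otimes\text{rest}\bigr) = -\OGWbar_{\beta,k+1}^{(\ell-1)}\bigl(\text{rest}\bigr) \]
to convert each occurrence of $\Gamma_\diamond$ into an additional boundary marking. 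After finitely many such steps, the remaining interior constraints lie entirely in $\{\Gamma_0,\dots,\Gamma_K\}$.

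Finally, to enforce the degree ordering $|\Gamma_{i_1}|\ge\dots\ge|\Gamma_{i_\ell}|$, I would invoke the symmetry axiom once more and sort the surviving tensor factors by degree, again picking up only an explicit sign (with any remaining freedom among factors of equal degree being harmless). At each stage the original coefficient is expressed as an explicit signed constant times a single coefficient of the target form, so computability is preserved across all the reductions.

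There is no real obstacle here; the argument is essentially bookkeeping. The only subtlety worth noting is that \eqref{Wall Crossing} requires $L$ to be homologically trivial, but this is precisely the case in which $\Gamma_\diamond$ appears, so the hypothesis is automatically satisfied whenever wall-crossing is invoked.
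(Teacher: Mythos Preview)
Your proposal is correct and follows essentially the same approach as the paper: split into cases according to whether $[L]=0$, note that in the nontrivial case $N=K-1$ so the indices already lie in $\{0,\dots,K\}$, and in the trivial case use symmetry to gather the $\Gamma_\diamond$ factors and then apply wall-crossing repeatedly, finishing with a symmetry reordering by degree. The paper's proof is the same argument with slightly less commentary on signs.
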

	
	\begin{proof}
		Consider the coefficient $\OGWbar_{\beta,k}^{\parentheses{\ell}}\big(\Gamma_{i_1}\otimes\dots\otimes\Gamma_{i_\ell}\big)$
		with $0\leq i_1,\dots,i_\ell\leq N$.
		By \eqref{Choice of Bases for Relative and Absolute Cohomologies}, we have two cases to consider:
		\begin{itemize}
			\item If $L$ is homologically nontrivial in $X$. Then $N=K-1$, so $0\leq i_j\leq K$ for all $1\leq j\leq \ell$.
			\item Otherwise, $[L] = 0$ in $H^*(X;\RR)$. In this case, $\left\{\Gamma_1,\dots,\Gamma_N\right\} = \left\{\Delta_1,\dots,\Delta_K\right\}\cup\left\{\Gamma_\diamond\right\}$. We may assume by the symmetry property \eqref{Open Symmetry} and without loss of generality, that $\Gamma_{i_1}=\dots=\Gamma_{i_p} = \Gamma_\diamond$ for some $0\leq p\leq\ell$. In this case note that $0\leq i_{p+1},\dots,i_{\ell}\leq K$. Then use the wall-crossing formula \eqref{Wall Crossing} repeatedly to get
			\begin{equation*}\label{Wall Crossing Step}
				\OGWbar_{\beta,k}^{\parentheses{\ell}}\big(\Gamma_{i_1}\otimes\dots\otimes\Gamma_{i_\ell}\big) = (-1)^p\OGWbar_{\beta,k+p}^{\parentheses{\ell-p}}\left(\Gamma_{i_{p+1}}\otimes\dots\otimes\Gamma_{i_\ell}\right).
			\end{equation*}
		\end{itemize}
		By the symmetry axiom \eqref{Open Symmetry}, we may order $\Gamma_{i_1},\dots,\Gamma_{i_\ell}$ in such a way that $\left|\Gamma_{i_1}\right|\geq\dots\geq\left|\Gamma_{i_\ell}\right|$.
		Lemma \ref{All OGW are Computable lemma pt. 1} follows.
	\end{proof}
	
	\begin{Remark}
		Since we assume that $H^2(X;\RR)$ generates $H^*(X;\RR)$, the degrees of all basis elements $\Delta_0,\dots,\Delta_N$ are even. Note that as a corollary of this fact and the symmetry axiom \eqref{Open Symmetry}, the value of any coefficient $\OGWbar_{\beta,k}^{\parentheses{\ell}}\big(\Gamma_{i_1}\otimes\dots\otimes\Gamma_{i_\ell}\big)$ for $\beta\in \Pi_{\geq 0}$, $k,\ell\in\ZZ_{\geq 0}$, and $0\leq i_1,\dots,i_\ell\leq K$ does not change under any permutation of its inner constraints.
	\end{Remark}
	
	We go on to the next reduction step in our proof, which shows that all coefficients of $\Omega$ with $\ell\geq 2$ are computable via terms of lower order:
	\begin{Lemma}
		\label{All OGW are Computable lemma pt. 2} Let $\beta\in\Pi_{\geq 0}$,  $k,\ell\in\ZZ_{\geq 0}$, and $0\leq i_1,\dots,i_\ell\leq K$, and assume that $\ell\geq 2$ and $\left|\Gamma_{i_1}\right|\geq\dots\geq\left|\Gamma_{i_\ell}\right|$. Then the coefficient $\OGWbar_{\beta,k}^{\parentheses{\ell}}(\Gamma_{i_1}\otimes\dots\otimes\Gamma_{i_\ell})$ is computable via coefficients of $\Omega$ of order lower than $(\omega(\beta),k,\ell,\left|\Gamma_{i_1}\right|,\dots,\left|\Gamma_{i_\ell}\right|)$, with energy $\omega(\beta)$ or energy less than or equal to $\omega(\beta)-\hbar$, and closed Gromov-Witten invariants.
	\end{Lemma}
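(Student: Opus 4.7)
The plan is to split on the smallest constraint degree $|\Gamma_{i_\ell}|$. If $|\Gamma_{i_\ell}|=0$, then using the symmetry axiom \eqref{Open Symmetry} to move the unit to the first slot, the fundamental class axiom \eqref{Open Fundamental Class} forces the coefficient to vanish unless $(\ell,\beta,k)=(2,\beta_0,0)$, in which case its value equals $P_\RR(\Gamma_{i_1})=0$ (since $\Gamma_{i_1}\in H^*(X;\RR)$, on which our $P_\RR$ is zero). If $|\Gamma_{i_\ell}|=2$, the divisor axiom \eqref{Open Divisor} gives the coefficient as $\int_\beta\Gamma_{i_\ell}$ times an OGW with $\ell-1$ inner constraints, which is of strictly lower order.

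The interesting case is $|\Gamma_{i_\ell}|\geq 4$, which forces $|\Gamma_{i_2}|\geq 4$. Using that $H^*(X;\RR)$ is generated by $H^2(X;\RR)$, I will decompose $\Gamma_{i_2}=\Delta_a\wedge\Delta_b$ with $\Delta_b$ a divisor basis element and $|\Delta_a|=|\Gamma_{i_2}|-2\geq 2$. Then I plug these $a,b$ with $c=i_1$ into the expanded open WDVV relation \eqref{Open WDVV Inner Constraints Polynomial After Expansion} and extract the coefficient of $T^\beta\tfrac{s^k}{k!}$ multiplied by the monomial in the $t$'s encoding $\Gamma_{i_3},\ldots,\Gamma_{i_\ell}$.

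On the right-hand side, the $\beta_1=0$ contribution to the first sum collapses via Lemma \ref{g is block matrix when needed} to $\OGWbar_{\beta,k}^{(\ell)}(\Delta_a\wedge\Delta_b,\Gamma_{i_1},\Gamma_{i_3},\ldots,\Gamma_{i_\ell})=\OGWbar_{\beta,k}^{(\ell)}(\Gamma_{i_1},\ldots,\Gamma_{i_\ell})$, namely the target. The remaining RHS1 contributions have $\beta_1\neq 0$, so by Lemma \ref{If varpi(beta)=0 then beta=0} one has $\omega(\varpi(\beta_1))\geq\hbar$, hence the paired OGW factor has energy at most $\omega(\beta)-\hbar$. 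On the left-hand side, the $\beta_2=0$ piece of the first sum collapses analogously to $\OGWbar_{\beta,k}^{(\ell)}(\Delta_a,\Delta_b\wedge\Gamma_{i_1},\Gamma_{i_3},\ldots,\Gamma_{i_\ell})$, which after expansion of $\Delta_b\wedge\Gamma_{i_1}$ in the basis becomes a combination of OGWs with constraint-degree multiset $\{|\Gamma_{i_2}|-2,|\Gamma_{i_1}|+2,|\Gamma_{i_3}|,\ldots,|\Gamma_{i_\ell}|\}$. For the product terms LHS2 and RHS2, the would-be $\beta_j=0$ pieces vanish because the zero and fundamental class axioms either require an argument equal to the unit (excluded by $|\Delta_a|,|\Gamma_{i_1}|\geq 2$) or apply $P_\RR$ to classes in $H^*(X;\RR)$; the surviving contributions have both factors with energy $\geq\hbar$, hence each at lower energy.

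The main obstacle is verifying that the LHS1 main term is of strictly lower order than the target. Since all relative cohomology degrees are even, sorting the new multiset $\{|\Gamma_{i_2}|-2,|\Gamma_{i_1}|+2,|\Gamma_{i_3}|,\ldots,|\Gamma_{i_\ell}|\}$ decreasingly, the entry $|\Gamma_{i_2}|-2$ inserts at some position $p$ such that every position strictly to the right of $p$ agrees with the original sorted tuple, while at position $p$ itself the new value is strictly smaller than the original $|\Gamma_{i_2}|$. This matches exactly the criterion in the definition of the partial order, so the new tuple is strictly smaller at the same energy $\omega(\beta)$. Assembling all pieces, the WDVV extraction expresses $\OGWbar_{\beta,k}^{(\ell)}(\Gamma_{i_1},\ldots,\Gamma_{i_\ell})$ in terms of coefficients of $\Omega$ of strictly lower order---either at energy $\omega(\beta)$ with a smaller degree sequence, or at energy at most $\omega(\beta)-\hbar$---and closed Gromov-Witten invariants.
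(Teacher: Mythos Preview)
Your argument is correct but takes a genuinely different route from the paper's. The paper decomposes the \emph{smallest} constraint $\Gamma_{i_\ell}=\delta'\wedge\delta$ with $|\delta|=2$ and takes $(a,b,c)=(i_1,\text{index}(\delta'),\text{index}(\delta))$; the target then appears as the $\beta_2=0$ contribution to the first LHS sum, while the $\beta_1=0$ contribution to the first RHS sum, because $\Gamma_c=\delta$ is a divisor, collapses via the divisor axiom to an invariant with only $\ell-1$ interior constraints---a strict drop in $\ell$. You instead decompose $\Gamma_{i_2}$ and set $c=i_1$, so the target appears on the RHS while the competing main term on the LHS keeps $\ell$ constraints but has the shifted degree multiset $\{|\Gamma_{i_1}|+2,\,|\Gamma_{i_2}|-2,\,|\Gamma_{i_3}|,\ldots,|\Gamma_{i_\ell}|\}$. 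Your ordering check is right, modulo one phrasing slip: at the insertion position $p$ you must compare $|\Gamma_{i_2}|-2$ to the \emph{original entry} $a_p$, not to $|\Gamma_{i_2}|$; the needed inequality $|\Gamma_{i_2}|-2<a_p$ holds by the definition of $p$. The paper's route is slightly cleaner---it reduces $\ell$ rather than just the degree tuple---and it yields the explicit recursion \eqref{summary of all OGW are computable lemma pt. 2} that is reused in Section~\ref{products of projective spaces}; your route is closer in spirit to the original Kontsevich--Manin closed argument and does not need the divisor axiom at this step.
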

	
	\begin{proof}
		In this part of the proof, we will closely follow the ideas of Konsevitch and Manin in the proof of Theorem \ref{Konsevitch-Manin} (see \cite[Theorem 3.1]{KontsevitchManin1994} for more details). By our assumption, $H^*(X;\RR)$ is generated by $H^2(X;\RR)$. Then there are several possible cases:
		\begin{itemize}
			\item  {$|\Gamma_{i_\ell}| = 0$:} In this case, $\Gamma_{i_\ell}=1$. In this case,  $\OGWbar_{\beta,k}^{\parentheses{\ell}}(\Gamma_{i_1}\otimes\dots\otimes\Gamma_{i_\ell})$ is directly computable via the fundamental class axiom \eqref{Open Fundamental Class}.
			\item  {$|\Gamma_{i_\ell}| = 2$}: In this case, by the divisor axiom \eqref{Open Divisor}, we have that:
			\begin{equation*}
				\OGWbar_{\beta,k}^{\parentheses{\ell}}(\Gamma_{i_1}\otimes\dots\otimes\Gamma_{i_\ell}) = \int_\beta\Gamma_{i_\ell}\cdot\OGWbar_{\beta,k}^{\parentheses{\ell-1}}(\Gamma_{i_1}\otimes\dots\otimes\Gamma_{i_{\ell-1}})
			\end{equation*}
			And the coefficient is computable via another coefficient of a lower order.
			\item  {$|\Gamma_{i_\ell}|>2$}: In this case, since $H^*(X;\RR)$ is generated by $H^2(X;\RR)$, we may write $\Gamma_{i_\ell} = \sum\limits_j\delta'_j\wedge\delta_j$, with $\delta_j\in H^2(X;\RR)$ and $|\delta'_j|\geq 2$ for all $j$. By linearity of the homomorphism $\OGWbar_{\beta,k}^{\parentheses{\ell}}$, and the wedge product, we may assume without loss of generality that $\Gamma_{i_\ell} = \delta'\wedge\delta$, where $\delta\in H^2(X;\RR)$, $|\delta'|\geq 2$, and both $\delta,\delta'$ are basis elements in $H^*(X;\RR)$.
			
			Therefore, $\delta'=\Gamma_{b}$ and $\delta=\Gamma_c$ for some $0\leq b,c\leq K$. We take $a=i_1$ and apply the first open WDVV relation -- let us equate the coefficients of $T^\beta s^k \prod\limits_{j=2}^\ell t_{i_j}$ in \eqref{Open WDVV Inner Constraints Polynomial After Expansion}. This gives
			
			\begin{align}
				\label{Open WDVV for Gamma_a=Gamma_i_1, Gamma_b=delta', Gamma_c=delta 1}\sum\limits_{\substack{S_1\cupdot S_2 =\\ \left\{2,\dots,\ell-1\right\}}} \sum\limits_{0\leq\mu,\nu\leq \kappa}\sum\limits_{\substack{\beta_1\in\Pi_{\geq 0}\\ \beta_2\in H_2(X;\ZZ)\\ \beta_1+\varpi(\beta_2)=\beta}}\frac{(-1)^{w_s(\beta_2)}}{k!}&\OGWbar_{\beta_1,k}^{\parentheses{|S_1|+2}}\big(\Gamma_\mu\otimes\Gamma_a\otimes\bigotimes\limits_{p\in S_1}\Gamma_{i_p}\big)\\
				\notag g^{\mu\nu}&\GW_{\beta_2}\big(\Delta_b\otimes\Delta_c\otimes\Delta_\nu\otimes\bigotimes\limits_{q\in S_2}\Delta_{i_q}\big)\\
				-\label{Open WDVV for Gamma_a=Gamma_i_1, Gamma_b=delta', Gamma_c=delta 2}\sum\limits_{\substack{S_1\cupdot S_2 =\\ \left\{2,\dots,\ell-1\right\}}}\sum\limits_{\substack{k_1,k_2\in\ZZ_{\geq 0}\\ k_1+k_2=k}}\sum\limits_{\substack{\beta_1,\beta_2\in\Pi_{\geq 0}\\ \beta_1+\beta_2=\beta}}\frac{1}{k_1!k_2!}&\OGWbar_{\beta_1,k_1+1}^{\parentheses{|S_1|+1}}\big(\Gamma_a\otimes\bigotimes\limits_{p\in S_1}\Gamma_{i_p}\big)\\
				\notag&\OGWbar_{\beta_2,k_2}^{\parentheses{|S_2|+2}}\big(\Gamma_b\otimes\Gamma_c\otimes\bigotimes\limits_{q\in S_2}\Gamma_{i_q}\big) \\
				\label{Open WDVV for Gamma_a=Gamma_i_1, Gamma_b=delta', Gamma_c=delta 3}=\sum\limits_{\substack{S_1\cupdot S_2 =\\ \left\{2,\dots,\ell-1\right\}}}\sum\limits_{0\leq\mu,\nu\leq \kappa}\sum\limits_{\substack{\beta_1\in H_2(X;\ZZ)\\ \beta_2\in\Pi_{\geq 0}\\ \varpi(\beta_1)+\beta_2=\beta}}\frac{(-1)^{w_s(\beta_1)}}{k!}&\GW_{\beta_1}\big(\Delta_a\otimes\Delta_b\otimes\Delta_\mu\otimes\bigotimes_{p\in S_1}\Delta_{i_p}\big)\\
				\notag g^{\mu\nu}&\OGWbar_{\beta_2,k}^{\parentheses{|S_2|+2}}\big(\Gamma_c\otimes\Gamma_\nu\otimes\bigotimes\limits_{q\in S_2}\Gamma_{i_q}\big)\\
				-\label{Open WDVV for Gamma_a=Gamma_i_1, Gamma_b=delta', Gamma_c=delta 4}\sum\limits_{\substack{S_1\cupdot S_2 =\\ \left\{2,\dots,\ell-1\right\}}}\sum\limits_{\substack{k_1,k_2\in\ZZ_{\geq 0}\\ k_1+k_2=k}}\sum\limits_{\substack{\beta_1,\beta_2\in\Pi_{\geq 0}\\ \beta_1+\beta_2=\beta}}\frac{1}{k_1!k_2!}&\OGWbar_{\beta_1,k_1}^{\parentheses{|S_1|+2}}\big(\Gamma_a\otimes\Gamma_b\otimes\bigotimes\limits_{p\in S_1}\Gamma_{i_p}\big)\\
				\notag&\OGWbar_{\beta_2,k_2+1}^{\parentheses{|S_1|+1}}\big(\Gamma_c\otimes\bigotimes\limits_{q\in S_2}\Gamma_{i_q}\big).
			\end{align}
			
			We look for nonzero terms of order $(\omega(\beta), k,\ell, \left|\Gamma_{i_1}\right|,\dots,\left|\Gamma_{i_{\ell}}\right|)$ or higher in the above equation, by considering each summand separately. In \eqref{Open WDVV for Gamma_a=Gamma_i_1, Gamma_b=delta', Gamma_c=delta 1}, the higher order terms are of the form:
			\begin{equation}\label{Open WDVV for Gamma_a=Gamma_i_1, Gamma_b=delta', Gamma_c=delta 1,1}
				\sum\limits_{0\leq\mu,\nu\leq \kappa}\OGWbar_{\beta_1,k}^{\parentheses{|S_1|+2}}\big(\Gamma_a\otimes\Gamma_\mu\otimes\bigotimes\limits_{p\in S_1}\Gamma_{i_p}\big)g^{\mu\nu}\GW_{\tilde{\beta}}\big(\Delta_b\otimes\Delta_c\otimes\Delta_\nu\otimes\bigotimes\limits_{q\in S_2}\Delta_{i_q}\big),
			\end{equation}
			for some disjoint sets $S_1,S_2$ that satisfy $S_1\cupdot S_2 = \left\{2,\dots,\ell-1\right\}$ and some $\beta_1\in\Pi_{\geq 0},\tilde{\beta}\in H_2(X;\ZZ)$ such that $\beta_1+\varpi(\tilde{\beta})=\beta$. For the open coefficient $\OGWbar$ in \eqref{Open WDVV for Gamma_a=Gamma_i_1, Gamma_b=delta', Gamma_c=delta 1,1} to have order $(\omega(\beta),k,\ell,\left|\Gamma_{i_1}\right|,\dots,\left|\Gamma_{i_\ell}\right|)$ or higher, we must have $\omega(\beta_1) = \omega(\beta)$, which implies that:
			\[ \omega(\tilde{\beta}) = \omega(\varpi(\tilde{\beta})) = \omega(\beta) - \omega(\beta_1) = 0<\hbar. \]
			By Lemma \ref{If varpi(beta)=0 then beta=0}, if this term does not vanish, we deduce that $\tilde{\beta} = 0$ and therefore $\beta_1 = \beta$.

			Since $\tilde{\beta}=0$, then by the closed zero axiom \eqref{Closed Zero}, the value $\GW_{\tilde{\beta}}\big(\Delta_b\otimes\Delta_c\otimes\Delta_\nu\otimes\bigotimes\limits_{q\in S_2}\Delta_{i_q}\big)$ is nonzero only if $|S_2|=0$, implying $S_1=\{2,\dots,\ell-1\}$. In this case \eqref{Open WDVV for Gamma_a=Gamma_i_1, Gamma_b=delta', Gamma_c=delta 1,1} reduces by Lemma \ref{g is block matrix when needed} to
			\begin{align*}
				\sum\limits_{0\leq\mu,\nu\leq \kappa} \OGWbar_{\beta,k}^{\parentheses{\ell}}&\big(\Gamma_a\otimes\Gamma_\mu\otimes\Gamma_{i_2}\otimes\dots\otimes\Gamma_{i_{\ell-1}}\big)g^{\mu\nu}\GW_{0}\big(\Delta_b\otimes\Delta_c\otimes\Delta_\nu\big) \\
				=\sum\limits_{0\leq \mu,\nu\leq \kappa} &\OGWbar_{\beta,k}^{\parentheses{\ell}}\big(\Gamma_a\otimes\Gamma_\mu\otimes\Gamma_{i_2}\otimes\dots\otimes\Gamma_{i_{\ell-1}}\big)g^{\mu\nu}\int_X\Delta_b\wedge\Delta_c\wedge\Delta_\nu \\
				=&\OGWbar_{\beta,k}^{\parentheses{\ell}}\big(\Gamma_a\otimes\big(\sum\limits_{0\leq \mu,\nu\leq \kappa}\int_X\Delta_b\wedge\Delta_c\wedge\Delta_\nu\big)g^{\mu\nu}\Gamma_\mu\otimes\Gamma_{i_2}\otimes\dots\otimes\Gamma_{i_\ell}\big)\\
				=&\OGWbar_{\beta,k}^{\parentheses{\ell}}\big(\Gamma_a\otimes\Delta_b\wedge\Delta_c\otimes\Gamma_{i_2}\otimes\dots\otimes\Gamma_{i_{\ell-1}}\big)\\
				=&\OGWbar_{\beta,k}^{\parentheses{\ell}}\big(\Gamma_{i_1}\otimes\Gamma_{i_2}\otimes\dots\otimes\Gamma_{i_{\ell-1}}\otimes\delta'\wedge\delta\big)\\
				=&\OGWbar_{\beta,k}^{\parentheses{\ell}}\big(\Gamma_{i_1}\otimes\Gamma_{i_2}\otimes\dots\otimes\Gamma_{i_{\ell-1}}\otimes\Gamma_{i_\ell}\big).
			\end{align*}
			The last expression is the coefficient we are looking to compute.
			Thus, the only non-vanishing higher order open term in \eqref{Open WDVV for Gamma_a=Gamma_i_1, Gamma_b=delta', Gamma_c=delta 1} is $\OGWbar_{\beta,k}^{\parentheses{\ell}}\big(\Gamma_{i_1}\otimes\Gamma_{i_2}\otimes\dots\otimes\Gamma_{i_{\ell-1}}\otimes\Gamma_{i_\ell}\big)$.
			
			In \eqref{Open WDVV for Gamma_a=Gamma_i_1, Gamma_b=delta', Gamma_c=delta 2}, the higher order terms are of the form
			\begin{align}
				\label{Open WDVV for Gamma_a=Gamma_i_1, Gamma_b=delta', Gamma_c=delta 2,0}&\OGWbar_{\beta_1,k_1+1}^{\parentheses{|S_1|+1}}\big(\Gamma_a\otimes\bigotimes\limits_{p\in S_1}\Gamma_{i_p}\big)\OGWbar_{\beta_2,k_2}^{\parentheses{|S_2|+2}}\big(\Gamma_b\otimes\Gamma_c\otimes\bigotimes\limits_{q\in S_2}\Gamma_{i_q}\big),
			\end{align}
			for some $\beta_1,\beta_2\in\Pi_{\geq 0}$ such that $\beta_1+\beta_2=\beta$, some disjoint sets $S_1,S_2$ that satisfy $S_1\cupdot S_2 = \left\{2,\dots,\ell-1\right\}$ and some $k_1,k_2\in\ZZ_{\geq 0}$ that satisfy $k_1+k_2=k$. If one of the factors in \eqref{Open WDVV for Gamma_a=Gamma_i_1, Gamma_b=delta', Gamma_c=delta 2,0} has order higher or equal to $(\omega(\beta),k,\ell,\left|\Gamma_{i_1}\right|,\dots,\left|\Gamma_{i_\ell}\right|)$, then $\omega(\beta_1)=\omega(\beta)$ or $\omega(\beta_2)=\omega(\beta)$. Therefore, $\omega(\beta_2)=0$ or $\omega(\beta_1)=0$. By the energy gap axiom \eqref{Energy Gap}, this term does not vanish only if $\beta_1=\beta_0$ or $\beta_2=\beta_0$. Therefore higher-order contributions to term \eqref{Open WDVV for Gamma_a=Gamma_i_1, Gamma_b=delta', Gamma_c=delta 2,0} reduce to
			\begin{align}
				\label{Open WDVV for Gamma_a=Gamma_i_1, Gamma_b=delta', Gamma_c=delta 2,1}&\OGWbar_{\beta,k_1+1}^{\parentheses{|S_1|+1}}\big(\Gamma_a\otimes\bigotimes\limits_{p\in S_1}\Gamma_{i_p}\big)\OGWbar_{\beta_0,k_2}^{\parentheses{|S_2|+2}}\big(\Gamma_b\otimes\Gamma_c\otimes\bigotimes\limits_{q\in S_2}\Gamma_{i_q}\big),
			\end{align}
			or
			\begin{align}
				\label{Open WDVV for Gamma_a=Gamma_i_1, Gamma_b=delta', Gamma_c=delta 2,2}&\OGWbar_{\beta_0,k_1+1}^{\parentheses{|S_1|+1}}\big(\Gamma_a\otimes\bigotimes\limits_{p\in S_1}\Gamma_{i_p}\big)\OGWbar_{\beta,k_2}^{\parentheses{|S_2|+2}}\big(\Gamma_b\otimes\Gamma_c\otimes\bigotimes\limits_{q\in S_2}\Gamma_{i_q}\big).
			\end{align}
			
			For \eqref{Open WDVV for Gamma_a=Gamma_i_1, Gamma_b=delta', Gamma_c=delta 2,1}, by the zero axiom \eqref{Open Zero}, a nonzero term must have $k_2=0$ and $|S_2|=0$. In this case, $S_1=\left\{2,\dots,\ell-1\right\}$ and $k_1=k$. Therefore \eqref{Open WDVV for Gamma_a=Gamma_i_1, Gamma_b=delta', Gamma_c=delta 2,1} reduces to
			\begin{equation*}
				\begin{aligned}
					\OGWbar_{\beta,k+1}^{\parentheses{\ell-1}}&\big(\Gamma_a\otimes\Gamma_{i_2}\otimes\dots\otimes\Gamma_{i_{\ell-1}}\big)\OGWbar_{\beta_0,0}^{\parentheses{2}}\big(\Gamma_b\otimes\Gamma_c\big) \\
					=&\OGWbar_{\beta,k+1}^{\parentheses{\ell-1}}\big(\Gamma_a\otimes\Gamma_{i_2}\otimes\dots\otimes\Gamma_{i_{\ell-1}}\big)P_\RR\left(\Gamma_b\wedge\Gamma_c\right) \\
					=&0,
				\end{aligned}
			\end{equation*}
			where the last equality holds because $P_\RR$ vanishes on $H^*(X;\RR)$. For \eqref{Open WDVV for Gamma_a=Gamma_i_1, Gamma_b=delta', Gamma_c=delta 2,2}, note that $|\Gamma_a|\geq 2>0$, so $\Gamma_a\neq 1$, and also that $k_1+1\geq 1$. By the zero axiom \eqref{Open Zero}, then, the value $\OGWbar_{\beta_0,k_1+1}^{\parentheses{|S_1|+1}}\big(\Gamma_a\otimes\bigotimes\limits_{p\in S_1}\Gamma_{i_p}\big)$ vanishes and thus \eqref{Open WDVV for Gamma_a=Gamma_i_1, Gamma_b=delta', Gamma_c=delta 2,2} reduces to zero. We thus conclude that there are no higher order terms in \eqref{Open WDVV for Gamma_a=Gamma_i_1, Gamma_b=delta', Gamma_c=delta 2}.
			
			In \eqref{Open WDVV for Gamma_a=Gamma_i_1, Gamma_b=delta', Gamma_c=delta 3}, the higher order terms are
			\begin{align}
				\label{Open WDVV for Gamma_a=Gamma_i_1, Gamma_b=delta', Gamma_c=delta 3,1}\sum\limits_{0 \leq\mu,\nu\leq K} &\GW_{\tilde{\beta}}\big(\Delta_a\otimes\Delta_b\otimes\Delta_\mu\otimes\bigotimes_{p\in S_1}\Delta_{i_p}\big)g^{\mu\nu}\OGWbar_{\beta_2,k}^{\parentheses{|S_2|+2}}\big(\Gamma_c\otimes\Gamma_\nu\otimes\bigotimes\limits_{q\in S_2}\Gamma_{i_q}\big),
			\end{align}
			for some disjoint sets $S_1,S_2$ that satisfy $S_1\cupdot S_2 = \left\{2,\dots,\ell-1\right\}$ and some $\tilde{\beta}\in H_2(X;\ZZ),\beta_2\in\Pi_{\geq 0}$ that satisfy $\varpi(\tilde{\beta})+\beta_2=\beta$. For the open factor $\OGWbar$ in \eqref{Open WDVV for Gamma_a=Gamma_i_1, Gamma_b=delta', Gamma_c=delta 3,1} to have order $(\omega(\beta),k,\ell,\left|\Gamma_{i_1}\right|,\dots,\left|\Gamma_{i_\ell}\right|)$ or higher, we must have $\omega(\beta_2) = \omega(\beta)$, and therefore
			\[
			\omega(\tilde{\beta}) = 0<\hbar.
			\]
			By Lemma \ref{If varpi(beta)=0 then beta=0}, we deduce that the  term in \eqref{Open WDVV for Gamma_a=Gamma_i_1, Gamma_b=delta', Gamma_c=delta 3,1} has higher order and the closed factor does not vanish only if $\tilde{\beta} = 0$.
			
			Since $\tilde{\beta}=0$, by the closed zero axiom \eqref{Closed Zero}, the value $\GW_{\tilde{\beta}}\big(\Delta_a\otimes\Delta_b\otimes\Delta_\mu\otimes\bigotimes\limits_{p\in S_1}\Delta_{i_p}\big)$ is nonzero only if $|S_1|=0$, implying $S_2 = \left\{2,\dots,\ell-1\right\}$. In this case \eqref{Open WDVV for Gamma_a=Gamma_i_1, Gamma_b=delta', Gamma_c=delta 3,1} reduces, by Lemma \ref{g is block matrix when needed}, to
			\begin{align*}
				\sum\limits_{0\leq\mu,\nu\leq K} \GW_{0}&\big(\Delta_a\otimes\Delta_b\otimes\Delta_\mu\big)g^{\mu\nu}\OGWbar_{\beta,k}^{\parentheses{\ell}}\big(\Gamma_c\otimes\Gamma_\nu\otimes\Gamma_{i_2}\otimes\dots\otimes\Gamma_{i_{\ell-1}}\big)\\
				=&\sum\limits_{0\leq \mu,\nu\leq K} \big(\int_X\Delta_a\wedge\Delta_b\wedge\Delta_\mu\big) g^{\mu\nu}\OGWbar_{\beta,k}^{\parentheses{\ell}}\big(\Gamma_c\otimes\Gamma_\nu\otimes\Gamma_{i_2}\otimes\dots\otimes\Gamma_{i_{\ell-1}}\big)\\
				=&\OGWbar_{\beta,k}^{\parentheses{\ell}}\big(\Gamma_c\otimes\big(\sum\limits_{\mu,\nu}\int_X\Delta_a\wedge\Delta_b\wedge\Delta_\mu\big) g^{\mu\nu}\Gamma_\nu\otimes\Gamma_{i_2}\otimes\dots\otimes\Gamma_{i_{\ell-1}}\big)\\
				=&\OGWbar_{\beta,k}^{\parentheses{\ell}}\big(\Gamma_c\otimes\Delta_a\wedge\Delta_b\otimes\Gamma_{i_2}\otimes\dots\otimes\Gamma_{i_{\ell-1}}\big)\\
				=&\OGWbar_{\beta,k}^{\parentheses{\ell}}\big(\delta\otimes\Gamma_{i_1}\wedge\delta'\otimes\Gamma_{i_2}\otimes\dots\otimes\Gamma_{i_{\ell-1}}\big)\\
				=&\int_\beta\delta\cdot\OGWbar_{\beta,k}^{\parentheses{\ell-1}}\big(\Gamma_{i_1}\wedge\delta'\otimes\Gamma_{i_2}\otimes\dots\otimes\Gamma_{i_{\ell-1}}\big),
			\end{align*}
			where the last equality comes from the divisor axiom \eqref{Open Divisor}. Note that
			\[ \OGWbar_{\beta,k}^{\parentheses{\ell-1}}\big(\Gamma_{i_1}\wedge\delta'\otimes\Gamma_{i_2}\otimes\dots\otimes\Gamma_{i_{\ell-1}}\big) \]
			has an order that is strictly lower than the order of $\OGWbar_{\beta,k}^{\parentheses{\ell}}\big(\Gamma_{i_1}\otimes\dots\otimes\Gamma_{i_\ell}\big)$.\\
			We conclude that all open factors of higher order in \eqref{Open WDVV for Gamma_a=Gamma_i_1, Gamma_b=delta', Gamma_c=delta 3} are computable via lower order terms.
			
			Finally, for \eqref{Open WDVV for Gamma_a=Gamma_i_1, Gamma_b=delta', Gamma_c=delta 4}, the higher order terms are of the form
			\begin{align}
				\label{Open WDVV for Gamma_a=Gamma_i_1, Gamma_b=delta', Gamma_c=delta 4,0}&\OGWbar_{\beta_1,k_1}^{\parentheses{|S_1|+2}}\big(\Gamma_a\otimes\Gamma_b\otimes\bigotimes\limits_{p\in S_1}\Gamma_{i_p}\big)\OGWbar_{\beta_2,k_2+1}^{\parentheses{|S_2|+1}}\big(\Gamma_c\otimes\bigotimes\limits_{q\in S_2}\Gamma_{i_q}\big),
			\end{align}
			for some $\beta_1,\beta_2\in\Pi_{\geq 0}$ such that $\beta_1+\beta_2=\beta$, some disjoint sets $S_1,S_2$ that satisfy $S_1\cupdot S_2 = \left\{2,\dots,\ell-1\right\}$, and some $k_1,k_2\in\ZZ_{\geq 0}$ that satisfy $k_1+k_2=k$. If either factor in \eqref{Open WDVV for Gamma_a=Gamma_i_1, Gamma_b=delta', Gamma_c=delta 4,0} has order higher or equal to $(\omega(\beta),k,\ell,\left|\Gamma_{i_1}\right|,\dots,\left|\Gamma_{i_\ell}\right|)$, then $\omega(\beta_1)=\omega(\beta)$ or $\omega(\beta_2) = \omega(\beta)$, and therefore $\omega(\beta_2)=0<\hbar$ or $\omega(\beta_1)=0<\hbar$. By the energy gap axiom \eqref{Energy Gap}, we deduce that the product does not vanish only if $\beta_2=\beta_0$ or $\beta_1 = \beta_0$. Therefore term \eqref{Open WDVV for Gamma_a=Gamma_i_1, Gamma_b=delta', Gamma_c=delta 4,0} reduces to
			\begin{align}
				\label{Open WDVV for Gamma_a=Gamma_i_1, Gamma_b=delta', Gamma_c=delta 4,1}&\OGWbar_{\beta,k_1}^{\parentheses{|S_1|+2}}\big(\Gamma_a\otimes\Gamma_b\otimes\bigotimes\limits_{p\in S_1}\Gamma_{i_p}\big)\OGWbar_{\beta_0,k_2+1}^{\parentheses{|S_2|+1}}\big(\Gamma_c\otimes\bigotimes\limits_{q\in S_2}\Gamma_{i_q}\big),
			\end{align}
			or
			\begin{align}
				\label{Open WDVV for Gamma_a=Gamma_i_1, Gamma_b=delta', Gamma_c=delta 4,2}&\OGWbar_{\beta_0,k_1}^{\parentheses{|S_1|+2}}\big(\Gamma_a\otimes\Gamma_b\otimes\bigotimes\limits_{p\in S_1}\Gamma_{i_p}\big)\OGWbar_{\beta,k_2+1}^{\parentheses{|S_2|+1}}\big(\Gamma_c\otimes\bigotimes\limits_{q\in S_2}\Gamma_{i_q}\big).
			\end{align}
			
			In \eqref{Open WDVV for Gamma_a=Gamma_i_1, Gamma_b=delta', Gamma_c=delta 4,1}, note that $|\Gamma_c|=2>0$, so $\Gamma_c\neq 1$. Therefore, by the zero axiom \eqref{Open Zero}, the value $\OGWbar_{\beta_0,k_2}^{\parentheses{|S_2|+1}}\big(\Gamma_c\otimes\bigotimes\limits_{q\in S_2}\Gamma_{i_q}\big)$ vanishes. We thus conclude that \eqref{Open WDVV for Gamma_a=Gamma_i_1, Gamma_b=delta', Gamma_c=delta 4,1} reduces to zero.
			
			In \eqref{Open WDVV for Gamma_a=Gamma_i_1, Gamma_b=delta', Gamma_c=delta 4,2}, if $\OGWbar_{\beta_0,k_1}^{\parentheses{|S_1|+2}}\big(\Gamma_a\otimes\Gamma_b\otimes\bigotimes\limits_{p\in S_1}\Gamma_{i_p}\big)$ does not vanish, then by the zero axiom \eqref{Open Zero} we must have $k_1=0$ and $|S_1|=0$, and therefore $S_2 = \left\{2,\dots,\ell-1\right\}$ and $k_2=k$. Thus, \eqref{Open WDVV for Gamma_a=Gamma_i_1, Gamma_b=delta', Gamma_c=delta 4,2} reduces to
			\begin{align*}
				&\OGWbar_{\beta_0,0}^{\parentheses{2}}\big(\Gamma_a\otimes\Gamma_b\big)\OGWbar_{\beta,k+1}^{\parentheses{\ell-1}}\big(\Gamma_c\otimes\Gamma_{i_2}\otimes\dots\otimes\Gamma_{i_{\ell-1}}\big) \\
				&=P_\RR\big(\Gamma_a\wedge\Gamma_b\big)\OGWbar_{\beta,k+1}^{\parentheses{\ell-1}}\big(\Gamma_c\otimes\Gamma_{i_2}\otimes\dots\otimes\Gamma_{i_{\ell-1}}\big) \\
				&=0,
			\end{align*}
			where the last equality comes from the fact that $P_\RR$ vanishes on $H^*(X;\RR)$. We conclude that there are no higher order factors in \eqref{Open WDVV for Gamma_a=Gamma_i_1, Gamma_b=delta', Gamma_c=delta 4}.
			
			We make a useful observation, though it is not needed for the purposes of our lemma, that in \eqref{Open WDVV for Gamma_a=Gamma_i_1, Gamma_b=delta', Gamma_c=delta 1} and \eqref{Open WDVV for Gamma_a=Gamma_i_1, Gamma_b=delta', Gamma_c=delta 3}, the are no degree-zero contributions, as detailed below.
			
			In \eqref{Open WDVV for Gamma_a=Gamma_i_1, Gamma_b=delta', Gamma_c=delta 1}, the only term with degree $\beta_0$ is of the form:
			\begin{equation}\label{Open WDVV for Gamma_a=Gamma_i_1, Gamma_b=delta', Gamma_c=delta 1,1 OMG Vanishing}
				\sum\limits_{0\leq\mu,\nu\leq \kappa}\OGWbar_{\beta_0,k}^{\parentheses{|S_1|+2}}\big(\Gamma_a\otimes\Gamma_\mu\otimes\bigotimes\limits_{p\in S_1}\Gamma_{i_p}\big)g^{\mu\nu}\GW_{\tilde{\beta}}\big(\Delta_b\otimes\Delta_c\otimes\Delta_\nu\otimes\bigotimes\limits_{q\in S_2}\Delta_{i_q}\big),
			\end{equation}
			for some disjoint sets $S_1,S_2$ that satisfy $S_1\cupdot S_2 = \left\{2,\dots,\ell-1\right\}$ and some $\tilde{\beta}\in H_2(X;\ZZ)$ such that $\varpi(\tilde{\beta})=\beta$. Since $|S_1|+2\geq 2$, we deduce by the zero axiom \eqref{Open Zero} that this term vanishes.
			
			In \eqref{Open WDVV for Gamma_a=Gamma_i_1, Gamma_b=delta', Gamma_c=delta 3}, the only term with degree $\beta_0$ is of the form:
			\begin{align}
				\label{Open WDVV for Gamma_a=Gamma_i_1, Gamma_b=delta', Gamma_c=delta 3,1 OMG Vanishing}\sum\limits_{0 \leq\mu,\nu\leq K} &\GW_{\tilde{\beta}}\big(\Delta_a\otimes\Delta_b\otimes\Delta_\mu\otimes\bigotimes_{p\in S_1}\Delta_{i_p}\big)g^{\mu\nu}\OGWbar_{\beta_0,k}^{\parentheses{|S_2|+2}}\big(\Gamma_c\otimes\Gamma_\nu\otimes\bigotimes\limits_{q\in S_2}\Gamma_{i_q}\big),
			\end{align}
			for some disjoint sets $S_1,S_2$ that satisfy $S_1\cupdot S_2 = \left\{2,\dots,\ell-1\right\}$ and some $\tilde{\beta}\in H_2(X;\ZZ)$ that satisfies $\varpi(\tilde{\beta})=\beta$. Since $|S_2|+2\geq 2$, we deduce by the zero axiom \eqref{Open Zero} that this term vanishes.
			
			In summary, we get the relation
			\begin{align}\label{summary of all OGW are computable lemma pt. 2} \OGWbar_{\beta,k}^{\parentheses{\ell}}(\Gamma_{i_1}\otimes\dots\otimes\Gamma_{i_\ell})\hspace{7em}& \\
				\notag= -\sum\limits_{\substack{S_1\cupdot S_2 =\\ \left\{2,\dots,\ell-1\right\}}} \sum\limits_{0\leq\mu,\nu\leq \kappa}\sum\limits_{\substack{\beta_1\in\Pi_{> 0}\\ \beta_2\in H_2(X;\ZZ)\\ \omega(\beta_2)>0\\ \beta_1+\varpi(\beta_2)=\beta}}&(-1)^{w_s(\beta_2)}\OGWbar_{\beta_1,k}^{\parentheses{|S_1|+2}}\big(\Gamma_\mu\otimes\Gamma_a\otimes\bigotimes\limits_{p\in S_1}\Gamma_{i_p}\big)\\
				\notag g^{\mu\nu}&\GW_{\beta_2}\big(\Delta_b\otimes\Delta_c\otimes\Delta_\nu\otimes\bigotimes\limits_{q\in S_2}\Delta_{i_q}\big)\\
				\notag-\sum\limits_{\substack{S_1\cupdot S_2 =\\ \left\{2,\dots,\ell-1\right\}}}\sum\limits_{\substack{k_1,k_2\in\ZZ_{\geq 0}\\ k_1+k_2=k}}\sum\limits_{\substack{\beta_1,\beta_2\in\Pi_{> 0}\\ \beta_1+\beta_2=\beta}}\binom{k}{k_1}&\OGWbar_{\beta_1,k_1+1}^{\parentheses{|S_1|+1}}\big(\Gamma_a\otimes\bigotimes\limits_{p\in S_1}\Gamma_{i_p}\big)\\
				\notag&\OGWbar_{\beta_2,k_2}^{\parentheses{|S_2|+2}}\big(\Gamma_b\otimes\Gamma_c\otimes\bigotimes\limits_{q\in S_2}\Gamma_{i_q}\big) \\
				\notag+\sum\limits_{\substack{S_1\cupdot S_2 =\\ \left\{2,\dots,\ell-1\right\}}}\sum\limits_{0\leq\mu,\nu\leq \kappa}\sum\limits_{\substack{\beta_1\in H_2(X;\ZZ)\\ \beta_2\in\Pi_{> 0}\\ \varpi(\beta_1)+\beta_2=\beta}}(-1)^{w_s(\beta_1)}&\GW_{\beta_1}\big(\Delta_a\otimes\Delta_b\otimes\Delta_\mu\otimes\bigotimes_{p\in S_1}\Delta_{i_p}\big)\\
				\notag g^{\mu\nu}&\OGWbar_{\beta_2,k}^{\parentheses{|S_2|+2}}\big(\Gamma_c\otimes\Gamma_\nu\otimes\bigotimes\limits_{q\in S_2}\Gamma_{i_q}\big)\\
				\notag-\sum\limits_{\substack{S_1\cupdot S_2 =\\ \left\{2,\dots,\ell-1\right\}}}\sum\limits_{\substack{k_1,k_2\in\ZZ_{\geq 0}\\ k_1+k_2=k}}\sum\limits_{\substack{\beta_1,\beta_2\in\Pi_{> 0}\\ \beta_1+\beta_2=\beta}}\binom{k}{k_1}&\OGWbar_{\beta_1,k_1}^{\parentheses{|S_1|+2}}\big(\Gamma_a\otimes\Gamma_b\otimes\bigotimes\limits_{p\in S_1}\Gamma_{i_p}\big)\\
				\notag&\OGWbar_{\beta_2,k_2+1}^{\parentheses{|S_1|+1}}\big(\Gamma_c\otimes\bigotimes\limits_{q\in S_2}\Gamma_{i_q}\big),
			\end{align}
			where the right-hand side of \eqref{summary of all OGW are computable lemma pt. 2} contains only terms of a lower order.
		\end{itemize}

		In each case, we get a relation of the sort
		\begin{equation*}
			\OGWbar_{\beta,k}^{\parentheses{\ell}}\left(\Gamma_{i_1}\otimes\dots\otimes\Gamma_{i_\ell}\right) = \text{a combination of terms of lower order}.
		\end{equation*}
		We also note that the energy gap axiom \eqref{Energy Gap} implies that such terms of lower order are of energy $\omega(\beta)$ or have energy lower than or equal to $\omega(\beta)-\hbar$.
		
		We conclude that the coefficient $\OGWbar_{\beta,k}^{\parentheses{\ell}}\left(\Gamma_{i_1}\otimes\dots\otimes\Gamma_{i_\ell}\right)$ is computable via coefficients of $\Omega$ of a lower order, with energy $\omega(\beta)$ or energy less than or equal to $\omega(\beta)-\hbar$, and closed Gromov-Witten invariants.
	\end{proof}	
	
	The third reduction step in our proof shows that all coefficients of $\Omega$ with $\ell\geq 1$ and $k\geq 1$ are computable via terms of lower order.
	\begin{Lemma}
		\label{All OGW are Computable lemma pt. 2, 2} Let $\beta\in\Pi_{\geq 0}$, $k,\ell\in\ZZ_{\geq 0}$, and $0\leq i_1,\dots,i_\ell\leq K$. Assume that $\ell\geq 1$ and $k\geq 1$, and that $\left|\Gamma_{i_1}\right|\geq\dots\geq\left|\Gamma_{i_\ell}\right|$. Then the coefficient $\OGWbar_{\beta,k}^{\parentheses{\ell}}(\Gamma_{i_1}\otimes\dots\otimes\Gamma_{i_\ell})$ is computable via coefficients of $\Omega$ of order lower than $(\omega(\beta),k,\ell,\left|\Gamma_{i_1}\right|,\dots,\left|\Gamma_{i_\ell}\right|)$, with energy $\omega(\beta)$ or energy less than or equal to $\omega(\beta)-\hbar$, and closed Gromov-Witten invariants.
	\end{Lemma}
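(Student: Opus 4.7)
The plan is to mimic the proof of Lemma \ref{All OGW are Computable lemma pt. 2} but to use the second (``boundary'') open WDVV equation \eqref{Open WDVV Boundary Constraint} in place of the inner one, since its $\partial_s$ derivatives are exactly what lets us reduce the number of boundary constraints. The hypothesis $k \ge 1$ will be used to extract a coefficient of $s^{k-1}/(k-1)!$ in the expansion \eqref{Open WDVV Boundary Constraints Polynomial After Expansion}. For $\ell \geq 2$ the conclusion is already contained in Lemma \ref{All OGW are Computable lemma pt. 2}, so the essentially new case is $\ell=1$; nonetheless, the same argument runs uniformly for all $\ell \geq 1$. As in the previous lemma, I would first dispose of the low-degree subcases of $\Gamma_{i_1}$: if $|\Gamma_{i_1}| = 0$ apply the fundamental class axiom \eqref{Open Fundamental Class}, and if $|\Gamma_{i_1}| = 2$ apply the divisor axiom \eqref{Open Divisor}, both producing $\OGWbar$'s of strictly lower order.

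In the main subcase $|\Gamma_{i_1}| > 2$, I would use generatedness of $H^*(X;\RR)$ by $H^2(X;\RR)$ and the linearity of $\OGWbar$ to assume that $\Gamma_{i_1} = \Gamma_a \wedge \Gamma_b$ with $\Gamma_a, \Gamma_b$ basis elements of strictly positive degree. With this choice of $a, b$, I would equate the coefficients of
\[
T^\beta\, \frac{s^{k-1}}{(k-1)!}\, \prod_{j=2}^{\ell} t_{i_j}
\]
(with appropriate factorials for repeated indices) on both sides of \eqref{Open WDVV Boundary Constraints Polynomial After Expansion}. The target $\OGWbar_{\beta,k}^{\parentheses{\ell}}(\Gamma_{i_1}\otimes\Gamma_{i_2}\otimes\cdots\otimes\Gamma_{i_\ell})$ appears on the right-hand side through the $\beta' = 0$ summand of Part~1: the closed zero axiom \eqref{Closed Zero} reduces the $\GW$ factor to $\int_X \Delta_a \wedge \Delta_b \wedge \Delta_\mu$, and Lemma \ref{g is block matrix when needed} collapses the sum over $\mu, \nu$ to $\Gamma_a \wedge \Gamma_b = \Gamma_{i_1}$, reconstituting the target from $\partial_s\partial_{i_1}\Omega$.

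Every other contribution then has to be shown to be of strictly lower order or expressible via closed Gromov-Witten invariants. On the LHS of \eqref{Open WDVV Boundary Constraints Polynomial After Expansion}, a top-order summand forces one of the two $\OGWbar$ factors to have trivial degree and at least one boundary marked point; since its unique inner constraint not coming from a $t$-derivative is $\Gamma_a$ or $\Gamma_b$, both of which are $\neq 1$, this factor vanishes by the zero axiom \eqref{Open Zero} and the fundamental class axiom \eqref{Open Fundamental Class}. In Part~1 of the RHS, summands with $\beta' \neq 0$ carry $\omega(\beta') \ge \hbar$ by Proposition \ref{hbar Proposition}, so the paired open factor has energy at most $\omega(\beta) - \hbar$, which is of strictly lower order. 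In Part~2, $-\partial_a\partial_b\Omega\cdot\partial_s^2\Omega$, top-order contributions again force one factor to have trivial degree; the summand $\OGWbar_{\beta_0,0}^{\parentheses{2}}(\Gamma_a \otimes \Gamma_b) = P_\RR(\Gamma_a \wedge \Gamma_b)$ vanishes since $P_\RR$ vanishes on $H^*(X;\RR)$, and the alternative splittings are killed by \eqref{Open Zero} applied to $\OGWbar_{\beta_0, k_2+2}^{\parentheses{0}}$ or by the energy gap axiom \eqref{Energy Gap}.

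The main obstacle, as in the proof of Lemma \ref{All OGW are Computable lemma pt. 2}, is the bookkeeping required to verify that among all summands produced by the product rule in \eqref{Open WDVV Boundary Constraints Polynomial After Expansion}, exactly the one identified above reproduces the target and every other top-order combination vanishes by an appropriate axiom. The presence of the extra $\partial_s^2\Omega$ factor in Part~2 produces a different type of degenerate splitting than in the inner WDVV and demands a separate but parallel analysis; on the other hand, the constraint that both $\Gamma_a$ and $\Gamma_b$ be non-unit basis elements makes several of the needed vanishings almost immediate from the fundamental class and zero axioms.
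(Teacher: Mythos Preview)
Your proposal is correct and follows essentially the same approach as the paper: both use the second open WDVV relation~\eqref{Open WDVV Boundary Constraint}, extract the appropriate $T^\beta s^{k-1}\prod t_{i_j}$ coefficient from the expansion~\eqref{Open WDVV Boundary Constraints Polynomial After Expansion}, recover the target from the $\beta_1=0$ summand of the $\partial_a\partial_b\partial_\mu\Phi\cdot g^{\mu\nu}\partial_s\partial_\nu\Omega$ block via the closed zero axiom and Lemma~\ref{g is block matrix when needed}, and kill all other top-order contributions with the zero, energy gap, and fundamental class axioms together with the vanishing of $P_\RR$ on $H^*(X;\RR)$. The only cosmetic difference is that the paper decomposes the \emph{smallest}-degree constraint $\Gamma_{i_\ell}=\delta'\wedge\delta$ (with $|\delta|=2$), whereas you decompose the \emph{largest}-degree constraint $\Gamma_{i_1}=\Gamma_a\wedge\Gamma_b$; since the only property used in the vanishing arguments is $|\Gamma_a|,|\Gamma_b|\ge 2$, and since for the genuinely new case $\ell=1$ the two choices coincide, this makes no difference.
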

	
	\begin{proof}
		By our assumption, $H^*(X;\RR)$ is generated by $H^2(X;\RR)$. We have several possible cases to consider:
		\begin{itemize}
			\item  {$\left|\Gamma_{i_{\ell}}\right| = 0$:} In this case, $\Gamma_{i_\ell} = 1$. Then $\OGWbar_{\beta,k}^{\parentheses{\ell}}\big(\Gamma_{i_1}\otimes\dots\otimes\Gamma_{i_{\ell}}\big)$ is directly computable via the fundamental class axiom \eqref{Open Fundamental Class}.
			\item  {$|\Gamma_{i_\ell}| = 2$}: In this case, by the divisor axiom \eqref{Open Divisor}, we have
			\begin{equation*}
				\OGWbar_{\beta,k}^{\parentheses{\ell}}(\Gamma_{i_1}\otimes\dots\otimes\Gamma_{i_\ell}) = \int_\beta\Gamma_{i_\ell}\cdot\OGWbar_{\beta,k}^{\parentheses{\ell-1}}(\Gamma_{i_1}\otimes\dots\otimes\Gamma_{i_{\ell-1}}),
			\end{equation*}
			and the coefficient is computable via another coefficient of a lower order.
			\item  {$|\Gamma_{i_\ell}|>2$}: In this case, we may write $\Gamma_{i_\ell} = \sum\limits_j\delta'_j\wedge\delta_j$, with $\delta_j\in H^2(X;\RR)$ and $|\delta'_j|\geq 2$ for all $j$. By linearity of the homomorphism $\OGWbar_{\beta,k}^{\parentheses{\ell}}$, we may assume without loss of generality that $\Gamma_{i_\ell} = \delta'\wedge\delta$, where $\delta\in H^2(X;\RR)$, $|\delta'|\geq 2$, and both $\delta,\delta'$ are basis elements in $H^*(X;\RR)$.
			
			Therefore, $\delta'=\Gamma_{a}$ and $\delta=\Gamma_b$ for some $0\leq a,b\leq K$. We now apply the second open WDVV relation -- let us equate coefficients of $T^\beta s^k \prod\limits_{j=1}^\ell t_{i_j}$ in \eqref{Open WDVV Boundary Constraints Polynomial After Expansion}. This gives
			
			\begin{align}
				\label{Open WDVV For Gamma_a=delta', Gamma_b=delta 1}- \sum\limits_{\substack{S_1\cupdot S_2=\\\left\{1,\dots,\ell-1\right\}}}\sum_{\substack{k_1,k_2\in\ZZ_{\geq 0}\\ k_1+k_2 = k-1}}\sum_{\substack{\beta_1,\beta_2\in\Pi_{\geq 0}\\ \beta_1+\beta_2=\beta}}\frac{1}{k_1!k_2!}&\OGWbar_{\beta_1,k_1+1}^{\parentheses{|S_1|+1}}\big(\Gamma_a\otimes\bigotimes\limits_{p\in S_1}\Gamma_{i_p}\big)\\
				\notag&\OGWbar_{\beta_2,k_2+1}^{\parentheses{|S_2|+1}}\big(\Gamma_b\otimes\bigotimes\limits_{q\in S_2}\Gamma_{i_q}\big) \\
				\label{Open WDVV For Gamma_a=delta', Gamma_b=delta 2} =\sum\limits_{\substack{S_1\cupdot S_2=\\ \left\{1,\dots,\ell-1\right\}}}\sum\limits_{0\leq \mu,\nu\leq \kappa}\sum\limits_{\substack{\beta_1\in H_2(X;\ZZ)\\ \beta_2\in \Pi_{\geq 0}\\ \varpi(\beta_1)+\beta_2=\beta}}\frac{(-1)^{w_s(\beta_1)}}{(k-1)!}&\GW_{\beta_1}\big(\Delta_a\otimes\Delta_b\otimes\Delta_\mu\otimes\bigotimes\limits_{p\in S_1}\Delta_{i_p}\big)\\
				\notag g^{\mu\nu}&\OGWbar_{\beta_2,k}^{\parentheses{|S_2|+1}}\big(\Gamma_\nu\otimes\bigotimes\limits_{q\in S_2}\Gamma_{i_q}\big)\\
				\label{Open WDVV For Gamma_a=delta', Gamma_b=delta 3} -\sum\limits_{\substack{S_1\cupdot S_2=\\\left\{1,\dots,\ell-1\right\}}}\sum_{\substack{k_1,k_2\in\ZZ_{\geq 0}\\ k_1+k_2=k-1}}\sum_{\substack{\beta_1,\beta_2\in\Pi_{\geq 0}\\ \beta_1+\beta_2=\beta}}\frac{1}{k_1!k_2!}&\OGWbar_{\beta_1,k_1}^{\parentheses{|S_1|+2}}\big(\Gamma_a\otimes\Gamma_b\otimes\bigotimes\limits_{p\in S_1}\Gamma_{i_p}\big)\\
				\notag&\OGWbar_{\beta_2,k_2+2}^{\parentheses{|S_2|}}\big(\bigotimes\limits_{q\in S_2}\Gamma_{i_q}\big).
			\end{align}
			
			As in the proof of Lemma \ref{All OGW are Computable lemma pt. 2}, we look for nonzero terms of order
			\[
			(\omega(\beta),k,\ell,\left|\Gamma_{i_1}\right|,\dots,\left|\Gamma_{i_\ell}\right|),
			\]
			and we do so by looking at each summand in the above equation separately. For \eqref{Open WDVV For Gamma_a=delta', Gamma_b=delta 1}, the higher order terms are of the form
			\begin{align}
				\label{Open WDVV For Gamma_a=delta', Gamma_b=delta 1, 0}&\OGWbar_{\beta_1,k_1+1}^{\parentheses{|S_1|+1}}\big(\Gamma_a\otimes\bigotimes\limits_{p\in S_1}\Gamma_{i_p}\big)\OGWbar_{\beta_2,k_2+1}^{\parentheses{|S_2|+1}}\big(\Gamma_b\otimes\bigotimes\limits_{q\in S_2}\Gamma_{i_q}\big),
			\end{align}
			for $\beta_1,\beta_2\in\Pi_{\geq 0}$, disjoint sets $S_1,S_2$ satisfying $S_1\cupdot S_2 = \left\{1,\dots,\ell-1\right\}$, and  $k_1,k_2\in\ZZ_{\geq 0}$ satisfying $k_1+k_2=k-1$. For one of the factors in \eqref{Open WDVV For Gamma_a=delta', Gamma_b=delta 1, 0} to have order $(\omega(\beta),k,\ell,\left|\Gamma_{i_1}\right|,\dots,\left|\Gamma_{i_\ell}\right|)$, we must have $\omega(\beta_1) = \omega(\beta)$ or $\omega(\beta_2) = \omega(\beta)$. Therefore, $\omega(\beta_2) = 0$ or $\omega(\beta_1)=0$. By the energy gap  axiom \eqref{Energy Gap}, we deduce that such nonzero contributions are possible only if $\beta_2=\beta_0$ or $\beta_1=\beta_0$, so \eqref{Open WDVV For Gamma_a=delta', Gamma_b=delta 1, 0} reduces to
			\begin{align}
				\label{Open WDVV For Gamma_a=delta', Gamma_b=delta 1, 1}&\OGWbar_{\beta,k_1+1}^{\parentheses{|S_1|+1}}\big(\Gamma_a\otimes\bigotimes\limits_{p\in S_1}\Gamma_{i_p}\big)\OGWbar_{\beta_0,k_2+1}^{\parentheses{|S_2|+1}}\big(\Gamma_b\otimes\bigotimes\limits_{q\in S_2}\Gamma_{i_q}\big)
			\end{align}
			or
			\begin{align}
				\label{Open WDVV For Gamma_a=delta', Gamma_b=delta 1, 2}&\OGWbar_{\beta_0,k_1+1}^{\parentheses{|S_1|+1}}\big(\Gamma_a\otimes\bigotimes\limits_{p\in S_1}\Gamma_{i_p}\big)\OGWbar_{\beta,k_2+1}^{\parentheses{|S_2|+1}}\big(\Gamma_b\otimes\bigotimes\limits_{q\in S_2}\Gamma_{i_q}\big).
			\end{align}
			
			Since $|\Gamma_a|,|\Gamma_b|\geq 2$, both \eqref{Open WDVV For Gamma_a=delta', Gamma_b=delta 1, 1} and \eqref{Open WDVV For Gamma_a=delta', Gamma_b=delta 1, 2} vanish by the zero axiom \eqref{Open Zero}. We thus conclude that there are no higher order factors in \eqref{Open WDVV For Gamma_a=delta', Gamma_b=delta 1}.

			In \eqref{Open WDVV For Gamma_a=delta', Gamma_b=delta 2}, the higher order terms are of the form
			\begin{align}
				\label{Open WDVV For Gamma_a=delta', Gamma_b=delta 2, 1}&\sum\limits_{0 \leq\mu,\nu\leq \kappa} \GW_{\tilde{\beta}}\big(\Delta_a\otimes\Delta_b\otimes\Delta_\mu\otimes\bigotimes\limits_{p\in S_1}\Delta_{i_p}\big)g^{\mu\nu}\OGWbar_{\beta_2,k}^{\parentheses{|S_2|+1}}\big(\Gamma_\nu\otimes\bigotimes\limits_{q\in S_2}\Gamma_{i_q}\big),
			\end{align}
			for some disjoint sets $S_1,S_2$ satisfying $S_1\cupdot S_2 = \left\{1,\dots,\ell-1\right\}$ and $\tilde{\beta}\in H_2(X;\ZZ)$, $\beta_2\in\Pi_{\geq 0}$ that satisfy $\varpi(\tilde{\beta})+\beta_2=\beta$. For the open factor $\OGWbar$ in \eqref{Open WDVV For Gamma_a=delta', Gamma_b=delta 2, 1} to have order higher or equal to $(\omega(\beta),k,\ell,\left|\Gamma_{i_1}\right|,\dots,\left|\Gamma_{i_\ell}\right|)$, we must have $\omega(\beta_2)=\omega(\beta)$, and therefore $\omega(\varpi(\tilde{\beta})) = 0$. By Lemma \ref{If varpi(beta)=0 then beta=0}, we deduce that such a contribution is possible only if $\tilde{\beta} = 0$.
			
			Since $\tilde{\beta}= 0$, the closed zero axiom \eqref{Closed Zero} implies that $\GW_{\tilde{\beta}}\big(\Delta_a\otimes\Delta_b\otimes\Delta_\mu\otimes\bigotimes\limits_{p\in S_1}\Delta_{i_p}\big)$ does not vanish only if $|S_1|=0$. In this case, $S_2 = \left\{1,\dots,\ell-1\right\}$, so \eqref{Open WDVV For Gamma_a=delta', Gamma_b=delta 2, 1} reduces, by Lemma \ref{g is block matrix when needed}, to
			\begin{align*}
				\sum\limits_{0\leq \mu,\nu\leq \kappa} \GW_{0}&\big(\Delta_a\otimes\Delta_b\otimes\Delta_\mu\big)g^{\mu\nu}\OGWbar_{\beta,k}^{\parentheses{\ell}}\big(\Gamma_\nu\otimes\Gamma_{i_1}\otimes\dots\otimes\Gamma_{i_{\ell-1}}\big) \\
				=&\sum\limits_{0\leq \mu,\nu\leq \kappa} \big(\int_X\Delta_a\wedge\Delta_b\wedge\Delta_\mu\big) g^{\mu\nu}\OGWbar_{\beta,k}^{\parentheses{\ell}}\big(\Gamma_\nu\otimes\Gamma_{i_1}\otimes\dots\otimes\Gamma_{i_{\ell-1}}\big) \\
				=&\OGWbar_{\beta,k}^{\parentheses{\ell}}\big(\big(\sum\limits_{0\leq \mu,\nu\leq \kappa}\int_X\Delta_a\wedge\Delta_b\wedge\Delta_\mu\big) g^{\mu\nu}\Gamma_\nu\otimes\Gamma_{i_1}\otimes\dots\otimes\Gamma_{i_{\ell-1}}\big) \\
				=&\OGWbar_{\beta,k}^{\parentheses{\ell}}\big(\Delta_a\wedge\Delta_b\otimes\Gamma_{i_1}\otimes\dots\otimes\Gamma_{i_{\ell-1}}\big)\\
				=&\OGWbar_{\beta,k}^{\parentheses{\ell}}\big(\Gamma_{i_1}\otimes\dots\otimes\Gamma_{i_{\ell}}\big).
			\end{align*}
			The last expression is the coefficient we are looking to compute.
			
			We therefore conclude that the only term where the open factor $\OGWbar$ has the highest order in \eqref{Open WDVV For Gamma_a=delta', Gamma_b=delta 2} is $\OGWbar_{\beta,k}^{\parentheses{\ell}}\big(\Gamma_{i_1}\otimes\dots\otimes\Gamma_{i_{\ell}}\big)$.
			
			In \eqref{Open WDVV For Gamma_a=delta', Gamma_b=delta 3}, the higher order terms are of the form			
			\begin{align}
				\label{Open WDVV For Gamma_a=delta', Gamma_b=delta 3, 0}&\OGWbar_{\beta_1,k_1}^{\parentheses{|S_1|+2}}\big(\Gamma_a\otimes\Gamma_b\otimes\bigotimes\limits_{p\in S_1}\Gamma_{i_p}\big)\OGWbar_{\beta_2,k_2+2}^{\parentheses{|S_2|}}\big(\bigotimes\limits_{q\in S_2}\Gamma_{i_q}\big),
			\end{align}
			for $\beta_1,\beta_2\in\Pi_{\geq 0}$ such that $\beta_1+\beta_2=\beta$, disjoint sets $S_1,S_2$ that satisfy $S_1\cupdot S_2 = \left\{2,\dots,\ell-1\right\}$, and $k_1,k_2\in\ZZ_{\geq 0}$ that satisfy $k_1+k_2=k-1$. If either of the factors in \eqref{Open WDVV For Gamma_a=delta', Gamma_b=delta 3, 0} has order higher or equal to $(\omega(\beta),k,\ell,\left|\Gamma_{i_1}\right|,\dots,\left|\Gamma_{i_\ell}\right|)$, then $\omega(\beta_1)=\omega(\beta)$ or $\omega(\beta_2) = \omega(\beta)$, and therefore $\omega(\beta_2)=0<\hbar$ or $\omega(\beta_1)=0<\hbar$. By the energy gap axiom \eqref{Energy Gap}, we deduce that the term does not vanish only if $\beta_2=\beta_0$ or $\beta_1 = \beta_0$. Therefore, the term \eqref{Open WDVV For Gamma_a=delta', Gamma_b=delta 3, 0} reduces to
			\begin{align}
				\label{Open WDVV For Gamma_a=delta', Gamma_b=delta 3, 1}&\OGWbar_{\beta,k_1}^{\parentheses{|S_1|+2}}\big(\Gamma_a\otimes\Gamma_b\otimes\bigotimes\limits_{p\in S_1}\Gamma_{i_p}\big)\OGWbar_{\beta_0,k_2+2}^{\parentheses{|S_2|}}\big(\bigotimes\limits_{q\in S_2}\Gamma_{i_q}\big)
			\end{align}
			or
			\begin{align}
				\label{Open WDVV For Gamma_a=delta', Gamma_b=delta 3, 2}&\OGWbar_{\beta_0,k_1}^{\parentheses{|S_1|+2}}\big(\Gamma_a\otimes\Gamma_b\otimes\bigotimes\limits_{p\in S_1}\Gamma_{i_p}\big)\OGWbar_{\beta,k_2+2}^{\parentheses{|S_2|}}\big(\bigotimes\limits_{q\in S_2}\Gamma_{i_q}\big).
			\end{align}
			In \eqref{Open WDVV For Gamma_a=delta', Gamma_b=delta 3, 1}, note that $k_2+2\geq 2$, and therefore the zero axiom \eqref{Open Zero} implies that this term vanishes.
			
			For \eqref{Open WDVV For Gamma_a=delta', Gamma_b=delta 3, 2}, note that $|S_2|+2\geq 2$, so the zero axiom \eqref{Open Zero} implies that $|S_2|=0$, $k_1 = 0$, and
			\begin{align*}
				\OGWbar_{\beta_0,k_1}^{\parentheses{|S_1|+2}}\big(\Gamma_a\otimes\Gamma_b\otimes\bigotimes\limits_{p\in S_1}\Gamma_{i_p}\big) &= P_\RR\big(\Gamma_a\wedge\Gamma_b \big) = 0,
			\end{align*}
			where the last equality comes from the fact that $P_\RR$ vanishes on $H^*(X;\RR)$.
			
			We conclude that all higher order terms in \eqref{Open WDVV For Gamma_a=delta', Gamma_b=delta 3, 2} vanish as well, and we therefore conclude that are no higher order terms in \eqref{Open WDVV For Gamma_a=delta', Gamma_b=delta 3}.
		\end{itemize}
		
		In summary, we get the relation
		\begin{align}\label{summary of all OGW are computable lemma pt. 2, 2}
			\OGWbar_{\beta,k}^{\parentheses{\ell}}(\Gamma_{i_1}\otimes\dots\otimes\Gamma_{i_\ell})\hspace{7em}& \\
			\notag= -\sum\limits_{\substack{S_1\cupdot S_2=\\\left\{1,\dots,\ell-1\right\}}}\sum_{\substack{k_1,k_2\in\ZZ_{\geq 0}\\ k_1+k_2 = k-1}}\sum_{\substack{\beta_1,\beta_2\in\Pi_{> 0}\\ \beta_1+\beta_2=\beta}}\binom{k-1}{k_1}&\OGWbar_{\beta_1,k_1+1}^{\parentheses{|S_1|+1}}\big(\Gamma_a\otimes\bigotimes\limits_{p\in S_1}\Gamma_{i_p}\big)\\
			\notag&\OGWbar_{\beta_2,k_2+1}^{\parentheses{|S_2|+1}}\big(\Gamma_b\otimes\bigotimes\limits_{q\in S_2}\Gamma_{i_q}\big) \\
			\notag+\sum\limits_{\substack{S_1\cupdot S_2=\\ \left\{1,\dots,\ell-1\right\}}}\sum\limits_{0\leq \mu,\nu\leq \kappa}\sum\limits_{\substack{\beta_1\in H_2(X;\ZZ)\\ \beta_2\in \Pi_{\geq 0}\\ \omega(\beta_1)>0\\ \varpi(\beta_1)+\beta_2=\beta}}(-1)^{w_s(\beta_1)}&\GW_{\beta_1}\big(\Delta_a\otimes\Delta_b\otimes\Delta_\mu\otimes\bigotimes\limits_{p\in S_1}\Delta_{i_p}\big)\\
			\notag g^{\mu\nu}&\OGWbar_{\beta_2,k}^{\parentheses{|S_2|+1}}\big(\Gamma_\nu\otimes\bigotimes\limits_{q\in S_2}\Gamma_{i_q}\big)\\
			\notag-\sum\limits_{\substack{S_1\cupdot S_2=\\\left\{1,\dots,\ell-1\right\}}}\sum_{\substack{k_1,k_2\in\ZZ_{\geq 0}\\ k_1+k_2=k-1}}\sum_{\substack{\beta_1,\beta_2\in\Pi_{> 0}\\ \beta_1+\beta_2=\beta}}\binom{k-1}{k_1}&\OGWbar_{\beta_1,k_1}^{\parentheses{|S_1|+2}}\big(\Gamma_a\otimes\Gamma_b\otimes\bigotimes\limits_{p\in S_1}\Gamma_{i_p}\big)\\
			\notag&\OGWbar_{\beta_2,k_2+2}^{\parentheses{|S_2|}}\big(\bigotimes\limits_{q\in S_2}\Gamma_{i_q}\big),
		\end{align}
		where the right-hand side contains only lower-order terms. As in Lemma \ref{All OGW are Computable lemma pt. 2}, we also note that the energy gap axiom \eqref{Energy Gap} implies that such terms of lower order are of energy $\omega(\beta)$ or have energy lower than or equal to $\omega(\beta)-\hbar$.
		
		We conclude that the coefficient $\OGWbar_{\beta,k}^{\parentheses{\ell}}\left(\Gamma_{i_1}\otimes\dots\otimes\Gamma_{i_\ell}\right)$ is computable via coefficients of $\Omega$ of a lower order, with energy $\omega(\beta)$ or energy lower than or equal to $\omega(\beta)-\hbar$, and closed Gromov-Witten invariants.
	\end{proof}
	
	Finally, we move to the last reduction step of our proof. We show that if the assumptions of Theorem \ref{All OGW are computable 2 two OGW two computable} are met, then all coefficients of $\Omega$ with $\ell=0$ and $k\geq 2$ are computable via terms of a lower order.
	
	\begin{Lemma}
		\label{All OGW are Computable lemma pt. 3} Assume that $S_L = \ker(\omega\oplus\mu)$. Assume also that there exist $\beta'\in \Pi_{\geq 0}$, $\ell'\in\ZZ_{\geq 1}$, and $0\leq i_1,\dots,i_{\ell'}\leq K$ for which $\omega(\beta')=\hbar$ and $\OGWbar_{\beta',0}^{\parentheses{\ell'}}\left(\Gamma_{i_1}\otimes\dots\otimes\Gamma_{i_\ell'}\right)\neq 0$. Assume further that the pair $(\beta',\ell')$ is minimal, in the sense that for any $\beta''\in\Pi_{>0}$ such that $\omega(\beta'')=\hbar$, any $\ell''\in\ZZ_{\geq 0}$ such that $\ell''<\ell'$, and any $0\leq j_1,\dots,j_{\ell''}\leq K$, the term $\OGWbar_{\beta'',0}^{\parentheses{\ell''}}\left(\Gamma_{j_1}\otimes\dots\otimes\Gamma_{j_{\ell''}}\right)$ vanishes. If $\ell'=1$, assume that $\left|\Gamma_{i_1}\right| = 2n$.\\
		Let $\beta\in\Pi_{\geq 0}$ and $k\in\ZZ_{\geq 0}$ such that $k\geq 2$. Then the coefficient $\OGW_{\beta,k}^{\parentheses{0}}$ is computable via coefficients of $\Omega$ of order lower than $(\omega(\beta),k,0,0)$, with energy either $\omega(\beta)$ less than or equal to $\omega(\beta)-\hbar$, and closed Gromov-Witten invariants.
	\end{Lemma}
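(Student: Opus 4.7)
The plan is to use the second open WDVV equation \eqref{Open WDVV Boundary Constraint} in its expanded form \eqref{Open WDVV Boundary Constraints Polynomial After Expansion}, with a carefully chosen pair of indices, and to extract a specific monomial coefficient that isolates $\OGWbar_{\beta,k}^{\parentheses{0}}$. The first step is to pick a divisor class $\Gamma_a \in H^2(X,L;\RR)$ that pairs nontrivially with $\beta'$: since $\omega|_L=0$, the symplectic class $[\omega]$ admits a lift to $H^2(X,L;\RR)$ with $\int_{\beta'}[\omega] = \omega(\beta') = \hbar > 0$, so expanding this lift in the basis $\{\Gamma_0,\dots,\Gamma_N\}$ and using linearity of the WDVV relation we may take $a = b$ equal to the index of a basis element with $\int_{\beta'}\Gamma_a\ne 0$.

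With this choice of $a=b$, I extract from \eqref{Open WDVV Boundary Constraints Polynomial After Expansion} the coefficient of $T^{\beta+\beta'}\frac{s^{k-2}}{(k-2)!}\prod_{j} \frac{t_{i_j}^{p_j}}{p_j!}$, where the $t$-monomial records the interior constraints of the assumed nonzero $\OGWbar_{\beta',0}^{\parentheses{\ell'}}(\Gamma_{i_1}\otimes\cdots\otimes\Gamma_{i_{\ell'}})$, grouped by multiplicities $p_j$. This is well-defined because $k\ge 2$. Inside the $-\partial_a\partial_b\Omega\cdot\partial_s^2\Omega$ piece of \eqref{Open WDVV Boundary Constraint}, the summand with $\beta_1=\beta'$, $k_1=0$, $\ell_1=\ell'$, $\beta_2=\beta$, $k_2=k-2$, $\ell_2=0$ (all interior constraints in the first factor) produces, up to a nonzero combinatorial constant,
\[
-\,\OGWbar_{\beta',0}^{\parentheses{\ell'+2}}\!\big(\Gamma_a\otimes\Gamma_a\otimes\Gamma_{i_1}\otimes\cdots\otimes\Gamma_{i_{\ell'}}\big)\cdot\OGWbar_{\beta,k}^{\parentheses{0}}.
\]
Two successive applications of the divisor axiom \eqref{Open Divisor} reduce the first factor to $\bigl(\int_{\beta'}\Gamma_a\bigr)^2\,\OGWbar_{\beta',0}^{\parentheses{\ell'}}(\Gamma_{i_1}\otimes\cdots\otimes\Gamma_{i_{\ell'}})$, which is nonzero by hypothesis, thereby isolating a nonzero scalar multiple of $\OGWbar_{\beta,k}^{\parentheses{0}}$.

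The remaining work, paralleling Lemmas \ref{All OGW are Computable lemma pt. 2} and \ref{All OGW are Computable lemma pt. 2, 2}, is to verify that every other contribution in the extracted equation is either a product of coefficients of $\Omega$ of strictly lower order than $(\omega(\beta),k,0,0)$, a closed Gromov--Witten invariant times a lower-order factor, or vanishes. The identity $\omega(\beta_1)+\omega(\beta_2)=\omega(\beta)+\hbar$ in each product, combined with the energy gap axiom \eqref{Energy Gap}, Lemma \ref{If varpi(beta)=0 then beta=0}, and the zero axiom \eqref{Open Zero} (using that $\Gamma_a\neq 1$ and that $P_\RR$ vanishes on $H^*(X;\RR)$), handles most decompositions; the LHS and the first RHS term are already of strictly lower order because $k_i+1\le k-1$. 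The main obstacle is the family of second-RHS terms with $\omega(\beta_1)=\hbar$: divisor reduction brings the first factor to $\bigl(\int_{\beta_1}\Gamma_a\bigr)^2\,\OGWbar_{\beta_1,0}^{\parentheses{\ell_1}}(\ldots)$, and minimality of $(\beta',\ell')$ kills it when $\ell_1<\ell'$; when $\ell_1=\ell'$, the degree axiom \eqref{Open Degree} forces $\mu(\beta_1)=\mu(\beta')$, which combined with the hypothesis $S_L=\ker(\omega\oplus\mu)$ yields $\beta_1=\beta'$ in $\Pi$, reducing this case to the target itself. The extra hypothesis $|\Gamma_{i_1}|=2n$ when $\ell'=1$ enters precisely here, pinning down $\mu(\beta')=n+1$ and eliminating a degenerate parallel contribution of the form $\OGWbar_{\beta_1,k_1}^{\parentheses{2}}(\Gamma_a\otimes\Gamma_a)\cdot\OGWbar_{\beta_2,k_2+2}^{\parentheses{1}}(\Gamma_{i_1})$ via the resulting degree mismatch. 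Once all other terms are accounted for, solving the resulting linear relation expresses $\OGWbar_{\beta,k}^{\parentheses{0}}$ in terms of strictly lower-order coefficients of $\Omega$ (of energy $\omega(\beta)$ or $\le\omega(\beta)-\hbar$) and closed Gromov--Witten invariants.
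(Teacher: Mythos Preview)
Your overall strategy---applying the second open WDVV relation \eqref{Open WDVV Boundary Constraint} at a well-chosen monomial to isolate $\OGWbar_{\beta,k}^{\parentheses{0}}$---is the right one, and the mechanism by which the target term emerges from $-\partial_a\partial_b\Omega\cdot\partial_s^2\Omega$ together with the divisor axiom is correct. However, there is a genuine gap in your treatment of the ``first RHS term'' $\rho^*\partial_a\partial_b\partial_\mu\Phi\, g^{\mu\nu}\,\partial_s\partial_\nu\Omega$.

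You assert that this term is of strictly lower order ``because $k_i+1\le k-1$,'' but the order is lexicographic with energy first. When $\beta_1=0$ in the closed factor, the closed zero axiom \eqref{Closed Zero} forces $|S_1|=0$, and Lemma \ref{g is block matrix when needed} collapses the sum to
\[
\OGWbar_{\beta+\beta',\,k-1}^{\parentheses{\ell'+1}}\bigl(\Gamma_a^2\otimes\Gamma_{i_1}\otimes\cdots\otimes\Gamma_{i_{\ell'}}\bigr),
\]
a coefficient with energy $\omega(\beta)+\hbar>\omega(\beta)$. This has order strictly \emph{greater} than $(\omega(\beta),k,0,0)$, regardless of the boundary count $k-1$. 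Your proposal does not address this term at all, and it does not vanish in general: $\Gamma_a^2$ is a nonzero degree-$4$ class, so neither the divisor axiom nor the $|\Gamma_{i_1}|=2n$ hypothesis kills it directly.

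This is precisely why the paper does not take $a=b$ both equal to a divisor. For $\ell'\ge 2$ the paper sets $a=i_1$, $b=i_2$ (the genuine high-degree constraints), so that the analogous $\beta_1=0$ contribution is $\OGWbar_{\beta+\beta',k-1}^{\parentheses{\ell'-1}}\bigl(\Gamma_{i_1}\wedge\Gamma_{i_2}\otimes\cdots\bigr)$, and then proves a separate auxiliary lemma (Lemma \ref{All OGW are Computable lemma pt. 3, aux}) showing that this specific high-energy term is nevertheless computable from coefficients of order below $(\omega(\beta),k,0,0)$. For $\ell'=1$ the paper takes $a$ a divisor but $b=i_1$, so the $\beta_1=0$ contribution becomes $\OGWbar_{\beta+\beta',k-1}^{\parentheses{1}}(\Gamma_a\wedge\Gamma_{i_1})$, which vanishes because $|\Gamma_{i_1}|=2n$ forces $\Gamma_a\wedge\Gamma_{i_1}=0$. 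This is where the $|\Gamma_{i_1}|=2n$ hypothesis is actually used---not, as you suggest, for a degree mismatch in the $-\partial_a\partial_b\Omega\cdot\partial_s^2\Omega$ block (that term is already killed by minimality of $\ell'$ after divisor reduction to $\OGWbar_{\beta_1,0}^{\parentheses{0}}$).
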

	
	Before proving Lemma \ref{All OGW are Computable lemma pt. 3}, we prove an auxiliary result.
	\begin{Lemma}\label{All OGW are Computable lemma pt. 3, aux}
		Under the assumptions and notation of Lemma \ref{All OGW are Computable lemma pt. 3}, assume $\ell'\geq 2$ and let $S\subseteq\left\{1,\dots,\ell'\right\}$ be a subset of size $\ell'-1$. Assume also that $\omega(\beta)\geq \hbar$ and that the coefficient $\OGWbar_{\beta,k}$ satisfies the degree axiom \eqref{Open Degree}.\\
		Then the coefficient $\OGWbar_{\beta+\beta',k-1}^{\parentheses{\ell'-1}}\big(\bigotimes\limits_{p\in S}\Gamma_{i_p}\big)$ can be computed via coefficients of $\Omega$ of order lower than $(\omega(\beta),k,0,0)$, with energy either $\omega(\beta)$ or less than or equal to $\omega(\beta)-\hbar$, and closed Gromov-Witten invariants.
	\end{Lemma}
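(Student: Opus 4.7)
The plan is to express $\OGWbar_{\beta+\beta', k-1}^{\parentheses{\ell'-1}}\big(\bigotimes_{p\in S}\Gamma_{i_p}\big)$ through a descent in the $B$-order, iterating the reduction lemmas already established and invoking the minimality of $(\beta', \ell')$ to eliminate the terms that would otherwise persist at energy $\omega(\beta) + \hbar$, above the target bound $\omega(\beta)$. The subtlety is that the output of Lemmas~\ref{All OGW are Computable lemma pt. 2} and \ref{All OGW are Computable lemma pt. 2, 2} is only guaranteed to be of strictly smaller $B$-order than the input, so terms produced in the reduction may still sit at the higher energy $\omega(\beta+\beta')$; it is precisely the minimality of $(\beta',\ell')$ that will force them to collapse.

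As a preliminary simplification, I would apply the degree axiom~\eqref{Open Degree} simultaneously to $\OGWbar_{\beta',0}^{\parentheses{\ell'}}(\Gamma_{i_1}\otimes\cdots\otimes\Gamma_{i_{\ell'}})$, to $\OGWbar_{\beta,k}^{\parentheses{0}}$, and to $\OGWbar_{\beta+\beta',k-1}^{\parentheses{\ell'-1}}\big(\bigotimes_{p\in S}\Gamma_{i_p}\big)$. Letting $\{j^*\}=\{1,\ldots,\ell'\}\setminus S$, the three resulting degree identities combine linearly to force $|\Gamma_{i_{j^*}}| = n-3$; otherwise the target vanishes by the degree axiom and the claim is trivial. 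Next, I would apply Lemma~\ref{All OGW are Computable lemma pt. 2} to the target when $\ell'-1\geq 2$, or Lemma~\ref{All OGW are Computable lemma pt. 2, 2} when $\ell'-1\geq 1$ and $k-1\geq 1$. The remaining boundary case $\ell'=2, k=1$ would be handled by direct case analysis on $|\Gamma_{i_{j^*}}|$ using the fundamental class~\eqref{Open Fundamental Class} and divisor~\eqref{Open Divisor} axioms. Each application replaces the target with a sum of terms of strictly smaller $B$-order; by the energy gap~\eqref{Energy Gap}, these terms have energy either $\omega(\beta+\beta') = \omega(\beta)+\hbar$ or at most $\omega(\beta)$, so only the former are still problematic.

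The main obstacle, and the heart of the proof, is to show that the residual terms at energy $\omega(\beta)+\hbar$ either vanish or reduce further to admissible terms. Each such term is a product of two open factors (or of an open and a closed factor) whose energies sum to $\omega(\beta)+\hbar$. If one factor has zero energy, the zero axiom~\eqref{Open Zero} or~\eqref{Closed Zero} trivialises the product, in conjunction with the observation that $P_\RR$ vanishes on $H^*(X;\RR)$. Otherwise both factors have positive energy, so each is $\geq \hbar$ by the energy gap, forcing the split to be exactly $\omega(\beta)+\hbar$ into summands of sizes $\omega(\beta)$ and $\hbar$. The low-energy factor is then of the form $\OGWbar_{\beta'',k''}^{\parentheses{\ell''}}$ with $\omega(\beta'')=\hbar$; because $S_L=\ker(\omega\oplus\mu)$, the class $\beta''$ is determined by $(\omega(\beta''),\mu(\beta''))$, and the minimality of $(\beta',\ell')$ forces this factor to vanish unless $\ell''\geq \ell'$ and $k''=0$. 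A careful bookkeeping of how the $\ell'-1$ interior and $k-1$ boundary marks distribute across the two factors in each WDVV summand should then show that no such splitting can be realised with the available marks, leaving only contributions of energy at most $\omega(\beta)$ and closed Gromov-Witten factors. I expect the most delicate step to be this bookkeeping in the borderline case $\ell'=2$, where the minimality hypothesis provides only the narrowest of constraints on which $\OGWbar_{\beta'',0}^{\parentheses{\ell''}}$ factors are killed.
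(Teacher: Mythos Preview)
Your core strategy—apply the second open WDVV relation (essentially Lemma~\ref{All OGW are Computable lemma pt. 2, 2}) and use minimality of $(\beta',\ell')$ to kill the dangerous low-energy factor—is exactly what the paper does; the paper simply writes out the relation directly at the coefficient $T^{\beta+\beta'}s^{k-2}\prod_{j\in S\setminus\{m\}}t_{i_j}$ rather than citing the earlier lemma.

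There are, however, two concrete problems. First, your preliminary degree computation is wrong: combining the three degree identities gives $|\Gamma_{i_{j^*}}|=0$, not $n-3$. (Add the first two and subtract the third; the $\mu$'s and $\ell'$'s cancel, leaving $n = n + |\Gamma_{i_{j^*}}|$.) Since every $|\Gamma_{i_j}|>2$ by minimality together with the divisor and fundamental-class axioms, this actually shows that the \emph{literal} statement of the lemma is vacuous by degree. The paper's proof is really intended for more general interior constraints—notably $\OGWbar^{(\ell'-1)}_{\beta+\beta',k-1}(\Gamma_{i_1}\wedge\Gamma_{i_2}\otimes\Gamma_{i_3}\otimes\cdots)$ as it arises inside Lemma~\ref{All OGW are Computable lemma pt. 3}—where this degree count no longer forces vanishing.

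Second, and more seriously for your main argument, the claim that ``only the terms at energy $\omega(\beta)+\hbar$ are still problematic'' is false. The output of Lemma~\ref{All OGW are Computable lemma pt. 2, 2} has order below $(\omega(\beta)+\hbar,k-1,\ell'-1,\dots)$, not below $(\omega(\beta),k,0,0)$. In the third block of~\eqref{summary of all OGW are computable lemma pt. 2, 2} applied at boundary level $k-1$, one obtains a factor $\OGWbar_{\beta_2,k_2+2}^{(|S_2|)}$ with $k_1+k_2=k-2$; when $k_2=k-2$ and $\omega(\beta_2)=\omega(\beta)$ this factor has energy $\omega(\beta)$ and $k$ boundary marks, hence order $\geq(\omega(\beta),k,0,0)$. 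You cannot dismiss it as automatically admissible. The remedy is what you gesture at afterwards: in that situation $k_1=0$ and $\omega(\beta_1)=\hbar$, so the paired factor is $\OGWbar_{\beta_1,0}^{(|S_1|+2)}(\Gamma_a\otimes\Gamma_b\otimes\cdots)$ with $|\Gamma_b|=2$; the divisor axiom drops its interior count to $|S_1|+1\le\ell'-1<\ell'$, and minimality then kills it. This is exactly the paper's mechanism, but your framing looks in the wrong energy stratum, and the promised ``careful bookkeeping'' is precisely the content that must be spelled out.
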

	
	\begin{proof}[Proof of Lemma \ref{All OGW are Computable lemma pt. 3, aux}]
		Let $j\in S$. First, we note that $|\Gamma_{i_j}|\geq 2$. If this were not the case then, since $H^2(X;\RR)$ generates $H^*(X;\RR)$, we would have $|\Gamma_{i_j}|=0$, and therefore $\Gamma_{i_j} = 1$. By the fundamental class axiom \eqref{Open Fundamental Class}, since $\ell'\geq 2$ and $k\geq 2$, the coefficient $\OGWbar_{\beta',0}^{\parentheses{\ell'}}\big(\Gamma_{i_1}\otimes\dots\otimes\Gamma_{i_\ell'}\big)$ would vanish. Next, we note that $|\Gamma_{i_{j}}|>2$. If this were not the case, we would have $|\Gamma_{i_{j}}|=2$, so by the divisor axiom \eqref{Open Divisor} we would get a contradiction to the minimality of $\ell'$.
		
		Thus, for all $j\in S$, we have $\left|\Gamma_{i_j}\right|>2$. Now, $\ell'\geq 2$ and $|S|=\ell'-1$, so $S$ is nonempty. Choose some $m\in S$. Since $|\Gamma_{i_{m}}|>2$, and since $H^*(X;\RR)$ is generated by $H^2(X;\RR)$, we may write $\Gamma_{i_m} = \sum\limits_j\delta'_j\wedge\delta_j$, with $\delta_j\in H^2(X;\RR)$ and $|\delta'_j|\geq 2$ for all $j$. By linearity of the homomorphism $\OGWbar_{\beta',0}^{\parentheses{\ell'-1}}$, we may assume without loss of generality that $\Gamma_{i_m} = \delta'\wedge\delta$, where $\delta\in H^2(X;\RR)$, $|\delta'|\geq 2$, and both $\delta,\delta'$ are basis elements in $H^*(X;\RR)$. Therefore, $\delta'=\Gamma_{a}$ and $\delta=\Gamma_b$ for some $0\leq a,b\leq K$.
		
		We now apply the second open WDVV relation with the above $\Gamma_a, \Gamma_b$. Equating the coefficients of $T^{\beta+\beta'}s^{k-2}\prod\limits_{j\in S\backslash\{m\}}t_{i_j}$ in \eqref{Open WDVV Boundary Constraints Polynomial After Expansion}, we get the relation
		\begin{align}
			\label{Open WDVV for aux lemma 1} -\sum\limits_{\substack{S_1\cupdot S_2=\\ S\backslash\{m\}}}\sum_{\substack{k_1,k_2\in\ZZ_{\geq 0}\\ k_1+k_2 = k-2}}\sum_{\substack{\beta_1,\beta_2\in\Pi_{\geq 0}\\ \beta_1+\beta_2=\beta+\beta'}}\frac{1}{k_1!k_2!}&\OGWbar_{\beta_1,k_1+1}^{\parentheses{|S_1|+1}}\big(\Gamma_a\otimes\bigotimes\limits_{p\in S_1}\Gamma_{i_p}\big)\\
			\notag&\OGWbar_{\beta_2,k_2+1}^{\parentheses{|S_2|+1}}\big(\Gamma_b\otimes\bigotimes\limits_{q\in S_2}\Gamma_{i_q}\big) \\
			\label{Open WDVV for aux lemma 2} =\sum\limits_{\substack{S_1\cupdot S_2=\\ S\backslash\left\{m\right\}}}\sum\limits_{0\leq \mu,\nu\leq \kappa}\sum\limits_{\substack{\beta_1\in H_2(X;\ZZ)\\ \beta_2\in \Pi_{\geq 0}\\ \varpi(\beta_1)+\beta_2=\beta+\beta'}}\frac{(-1)^{w_s(\beta_1)}}{(k-2)!}&\GW_{\beta_1}\big(\Delta_a\otimes\Delta_b\otimes\Delta_\mu\otimes\bigotimes\limits_{p\in S_1}\Delta_{i_p}\big)\\
			\notag g^{\mu\nu}&\OGWbar_{\beta_2,k-1}^{\parentheses{|S_2|+1}}\big(\Gamma_\nu\otimes\bigotimes\limits_{q\in S_2}\Gamma_{i_q}\big)\\
			\label{Open WDVV for aux lemma 3} -\sum\limits_{\substack{S_1\cupdot S_2=\\S\backslash\left\{m\right\}}}\sum_{\substack{k_1,k_2\in\ZZ_{\geq 0}\\ k_1+k_2=k-2}}\sum_{\substack{\beta_1,\beta_2\in\Pi_{\geq 0}\\ \beta_1+\beta_2=\beta+\beta'}}\frac{1}{k_1!k_2!}&\OGWbar_{\beta_1,k_1}^{\parentheses{|S_1|+2}}\big(\Gamma_a\otimes\Gamma_b\otimes\bigotimes\limits_{p\in S_1}\Gamma_{i_p}\big)\\
			\notag&\OGWbar_{\beta_2,k_2+2}^{\parentheses{|S_2|}}\big(\bigotimes\limits_{q\in S_2}\Gamma_{i_q}\big).
		\end{align}
		
		We will now look for nonzero terms with factors of order $(\omega(\beta),k,0,0)$ or higher in the above equation, and we will do so by considering each summand seperately. In \eqref{Open WDVV for aux lemma 1}, the terms with factors of higher order must be of the form
		
		\begin{equation}
			\label{Open WDVV for aux lemma 1, 1} \OGWbar_{\beta_1,k_1+1}^{\parentheses{|S_1|+1}}\big(\Gamma_a\otimes\bigotimes\limits_{p\in S_1}\Gamma_{i_p}\big)\OGWbar_{\beta_2,k_2+1}^{\parentheses{|S_2|+1}}\big(\Gamma_b\otimes\bigotimes\limits_{q\in S_2}\Gamma_{i_q}\big),
		\end{equation}
		for some disjoint sets $S_1,S_2$ such that $S_1\cupdot S_2 = S\backslash\{m\}$, some $\beta_1,\beta_2\in\Pi_{\geq 0}$ such that $\beta_1+\beta_2=\beta+\beta'$, and $k_1,k_2\in\ZZ_{\geq 0}$ such that $k_1+k_2=k-2$.
		
		We have several different cases to consider:
		\begin{enumerate}
			\item \label{Open WDVV for aux lemma 1, 1, case a} {$\beta_1 = \beta+\beta'$:} In this case, $\beta_2=\beta_0$. Note that $\left|\Gamma_b\right|\geq 2$, so $\Gamma_b\neq 1$, but on the other hand $k_1+1>0$. By the zero axiom \eqref{Open Zero}, the value $\OGWbar_{\beta_2,k_2+1}^{\parentheses{|S_2|+1}}\big(\Gamma_b\otimes\bigotimes\limits_{q\in S_2}\Gamma_{i_q}\big)$ vanishes, so in this case \eqref{Open WDVV for aux lemma 1, 1} reduces to zero.
			\item \label{Open WDVV for aux lemma 1, 1, case b} {$\beta_2 = \beta+\beta'$:} By an identical argument to case \ref{Open WDVV for aux lemma 1, 1, case a}, the term \eqref{Open WDVV for aux lemma 1, 1} reduces to zero in this case.
			\item \label{Open WDVV for aux lemma 1, 1, case c} {$\beta_2\neq \beta_0$ and $\omega(\beta_1) > \omega(\beta)$:} Note that in this case:
			\begin{equation*}
				\omega(\beta_2) = \omega(\beta+\beta'-\beta_1) = \omega(\beta)+\hbar - \omega(\beta_1) <\hbar.
			\end{equation*}
			By the energy gap axiom \eqref{Energy Gap}, the value $\OGWbar_{\beta_2,k_2+1}^{\parentheses{|S_2|+1}}\big(\Gamma_b\otimes\bigotimes\limits_{q\in S_2}\Gamma_{i_q}\big)$ vanishes, so in this case \eqref{Open WDVV for aux lemma 1, 1} reduces to zero.
			\item \label{Open WDVV for aux lemma 1, 1, case d} {$\beta_1\neq \beta_0$ and $\omega(\beta_2) > \omega(\beta)$:} By an identical argument to case \ref{Open WDVV for aux lemma 1, 1, case c},  \eqref{Open WDVV for aux lemma 1, 1} reduces to zero in this case.
			\item \label{Open WDVV for aux lemma 1, 1, case e} {$\omega(\beta_1) = \omega(\beta)$:} In this case, note that $k_1+1\leq k-1<k$, so the term $\OGWbar_{\beta_1,k_1+1}^{\parentheses{|S_1|+1}}\big(\Gamma_a\otimes\bigotimes\limits_{p\in S_1}\Gamma_{i_p}\big)$ has lower order than $(\omega(\beta),k,0,0)$. As for the term $\OGWbar_{\beta_2,k_2+1}^{\parentheses{|S_2|+1}}\big(\Gamma_b\otimes\bigotimes\limits_{q\in S_2}\Gamma_{i_q}\big)$, note that in this case $\omega(\beta_2) = \omega(\beta')=\hbar\leq\omega(\beta)$, and $k_2+1\leq k-1<k$, so this term also has order lower than $(\omega(\beta),k,0,0)$.
			\item \label{Open WDVV for aux lemma 1, 1, case f} {$\omega(\beta_2) = \omega(\beta)$:} By an identical argument to case \ref{Open WDVV for aux lemma 1, 1, case e}, term \eqref{Open WDVV for aux lemma 1, 1} is of lower order than $(\omega(\beta),k,0,0)$ in this case.
		\end{enumerate}
		We conclude there are no higher order terms in \eqref{Open WDVV for aux lemma 1}.
		
		In \eqref{Open WDVV for aux lemma 2}, the higher order terms are of the form
		\begin{equation}
			\label{Open WDVV for aux lemma 2, 1} \sum\limits_{0 \leq\mu,\nu\leq \kappa} \GW_{\beta_1}\big(\Delta_a\otimes\Delta_b\otimes\Delta_\mu\otimes\bigotimes\limits_{p\in S_1}\Delta_{i_p}\big)g^{\mu\nu}\OGWbar_{\beta_2,k-1}^{\parentheses{|S_2|+1}}\big(\Gamma_\nu\otimes\bigotimes\limits_{q\in S_2}\Gamma_{i_q}\big),
		\end{equation}
		for some disjoint sets $S_1,S_2$ that satisfy $S_1\cupdot S_2 = S\backslash\{m\}$, and some $\beta_1\in H_2(X;\ZZ)$ and $\beta_2\in\Pi_{\geq 0}$ that satisfy $\varpi(\beta_1)+\beta_2 = \beta+\beta'$.
		
		We again have several cases to consider:
		\begin{enumerate}
			\item  {$\beta_2 = \beta + \beta'$:} In this case, $\varpi(\beta_1) = \beta_0$. If $\beta_1\neq 0$, then the value $\GW_{\beta_1}\big(\Delta_a\otimes\Delta_b\otimes\Delta_\mu\otimes\bigotimes\limits_{p\in S_1}\Delta_{i_p}\big)$ vanishes due to Lemma \ref{If varpi(beta)=0 then beta=0}, and \eqref{Open WDVV for aux lemma 2, 1} reduces to zero. Otherwise, $\beta_1 = 0$. In this case, the zero axiom \eqref{Closed Zero} implies that $|S_1| = 0$, so $S_2 = S\backslash\{m\}$. Thus, \eqref{Open WDVV for aux lemma 2, 1} reduces, by Lemma \ref{g is block matrix when needed}, to:
			\begin{align*}
				\sum\limits_{0\leq \mu,\nu\leq \kappa} \GW_{0}&\big(\Delta_a\otimes\Delta_b\otimes\Delta_\mu\big)g^{\mu\nu}\OGWbar_{\beta+\beta',k-1}^{\parentheses{\ell'-1}}\big(\Gamma_\nu\otimes\bigotimes\limits_{p\in S\backslash\{m\}}\Gamma_{i_p}\big) \\
				=&\sum\limits_{0\leq \mu,\nu\leq \kappa} \big(\int_X\Delta_a\wedge\Delta_b\wedge\Delta_\mu\big) g^{\mu\nu}\OGWbar_{\beta+\beta',k-1}^{\parentheses{\ell'-1}}\big(\Gamma_\nu\otimes\bigotimes\limits_{p\in S\backslash\{m\}}\Gamma_{i_p}\big) \\
				=&\OGWbar_{\beta+\beta',k-1}^{\parentheses{\ell'-1}}\big(\big(\sum\limits_{0\leq \mu,\nu\leq \kappa} \int_X\Delta_a\wedge\Delta_b\wedge\Delta_\mu\big) g^{\mu\nu}\Gamma_\nu\otimes\bigotimes\limits_{p\in S\backslash\{m\}}\Gamma_{i_p}\big) \\
				=&\OGWbar_{\beta+\beta',k-1}^{\parentheses{\ell'-1}}\big(\Delta_a\wedge\Delta_b\otimes\bigotimes\limits_{p\in S\backslash\{m\}}\Gamma_{i_p}\big)\\
				=&\OGWbar_{\beta+\beta',k-1}^{\parentheses{\ell'-1}}\big(\Gamma_{i_m}\otimes\bigotimes\limits_{p\in S\backslash\{m\}}\Gamma_{i_p}\big)\\
				=&\OGWbar_{\beta+\beta',k-1}^{\parentheses{\ell'-1}}\big(\bigotimes\limits_{p\in S}\Gamma_{i_p}\big),
			\end{align*}
			And that is the coefficient we are looking to compute.
			
			\item  {$\varpi(\beta_1)\neq \beta_0$ and $\omega(\beta_2)>\omega(\beta)$:} In this case, note that
			
			\begin{equation*}
				\omega(\varpi(\beta_1)) = \omega(\beta+\beta' - \beta_2) = \omega(\beta)+\hbar - \omega(\beta_2) < \hbar.
			\end{equation*}
			
			By Lemma \ref{If varpi(beta)=0 then beta=0}, we deduce that $\GW_{\beta_1}\big(\Delta_a\otimes\Delta_b\otimes\Delta_\mu\otimes\bigotimes\limits_{p\in S_1}\Delta_{i_p}\big)$ vanishes, and therefore \eqref{Open WDVV for aux lemma 2, 1} reduces to zero.
			\item  {$\omega(\beta_2) = \omega(\beta)$:} In this case, the term $\OGWbar_{\beta_2,k-1}^{\parentheses{|S_2|+1}}\big(\Gamma_\nu\otimes\bigotimes\limits_{q\in S_2}\Gamma_{i_q}\big)$ has order lower than $(\omega(\beta),k,0,0)$.
		\end{enumerate}
		
		We conclude that the only higher order $\OGWbar$ term in \eqref{Open WDVV for aux lemma 2} is $\OGWbar_{\beta+\beta',k-1}^{\parentheses{\ell'-1}}\big(\bigotimes\limits_{p\in S}\Gamma_{i_p}\big)$.
		
		In \eqref{Open WDVV for aux lemma 3}, the higher order terms are of the form
		\begin{equation}\label{Open WDVV for aux lemma 3, 1}
			\OGWbar_{\beta_1,k_1}^{\parentheses{|S_1|+2}}\big(\Gamma_a\otimes\Gamma_b\otimes\bigotimes\limits_{p\in S_1}\Gamma_{i_p}\big)\OGWbar_{\beta_2,k_2+2}^{\parentheses{|S_2|}}\big(\bigotimes\limits_{q\in S_2}\Gamma_{i_q}\big),
		\end{equation}
		for some disjoint sets $S_1,S_2$ that satisfy $S_1\cupdot S_2 = S\backslash\{m\}$, some $\beta_1,\beta_2\in\Pi_{\geq 0}$ that satisfy $\beta_1+\beta_2 = \beta+\beta'$, and $k_1,k_2\in\ZZ_{\geq 0}$ that satisfy $k_1+k_2 = k-2$.
		
		Again, we have several cases to consider:
		\begin{enumerate}
			\item \label{Open WDVV for aux lemma 3, 1, case a} {$\beta_1 = \beta+\beta'$:} In this case, $\beta_2=\beta_0$. By the zero axiom \eqref{Open Zero} and the fact that $k_2+2\geq 2$, the value $\OGWbar_{\beta_2,k_2+2}^{\parentheses{|S_2|}}\big(\bigotimes\limits_{q\in S_2}\Gamma_{i_q}\big)$ vanishes, so in this case \eqref{Open WDVV for aux lemma 3, 1} reduces to zero.
			\item \label{Open WDVV for aux lemma 3, 1, case b} {$\beta_2 = \beta+\beta'$:} In this case $\beta_1 = \beta_0$. By the zero axiom \eqref{Open Zero}, this implies that $k_1 = 0$ and $|S_1| = 0$, so
			\begin{equation*}
				\OGWbar_{\beta_1,k_1}^{\parentheses{|S_1|+2}}\big(\Gamma_a\otimes\Gamma_b\otimes\bigotimes\limits_{p\in S_1}\Gamma_{i_p}\big) = P_\RR\big(\Gamma_a\wedge\Gamma_b\big) = 0,
			\end{equation*}
			since $P_\RR$ vanishes on $H^*(X;\RR)$. Therefore, \eqref{Open WDVV for aux lemma 3, 1} reduces to zero in this case.
			\item \label{Open WDVV for aux lemma 3, 1, case c} {$\beta_2\neq \beta_0$ and $\omega(\beta_1) > \omega(\beta)+\beta''$:} We note that
			\begin{equation*}
				\omega(\beta_2)  = \omega(\beta+\beta' - \beta_1) =  \omega(\beta)+\hbar - \omega(\beta_1) <\hbar.
			\end{equation*}
			By the energy gap axiom \eqref{Energy Gap}, the value $\OGWbar_{\beta_2,k_2+2}^{\parentheses{|S_2|}}\big(\bigotimes\limits_{q\in S_2}\Gamma_{i_q}\big)$ vanishes, so in this case \eqref{Open WDVV for aux lemma 3, 1} reduces to zero.
			\item \label{Open WDVV for aux lemma 3, 1, case d} {$\beta_1\neq \beta_0$ and $\omega(\beta_2) > \omega(\beta)$:} By an identical argument to case \ref{Open WDVV for aux lemma 3, 1, case c},  the term \eqref{Open WDVV for aux lemma 3, 1} reduces to zero in this case.
			\item \label{Open WDVV for aux lemma 3, 1, case e}  {$\omega(\beta_1) = \omega(\beta)$:} In this case, note that $k_1\leq k-2<k$, so $\OGWbar_{\beta_1,k_1}^{\parentheses{|S_1|+2}}\big(\Gamma_a\otimes\Gamma_b\otimes\bigotimes\limits_{p\in S_1}\Gamma_{i_p}\big)$ has lower order than $(\omega(\beta),k,0,0)$.
			
			On the other hand, note that $\omega(\beta_2) = \omega(\beta')$. If $\omega(\beta')<\omega(\beta)$, then the term $\OGWbar_{\beta_2,k_2+2}^{\parentheses{|S_2|}}\big(\bigotimes\limits_{q\in S_2}\Gamma_{i_q}\big)$ has lower order than $(\omega(\beta),k,0,0)$. If $\omega(\beta')=\omega(\beta)$, then for the term $\OGWbar_{\beta_2,k_2+2}^{\parentheses{|S_2|}}\big(\bigotimes\limits_{q\in S_2}\Gamma_{i_q}\big)$ to have order higher or equal to $(\omega(\beta),k,0,0)$, we must have that $k_2+2=k$, and therefore $k_1=0$. In this case $\omega(\beta_1)=\hbar$ and by the divisor axiom \eqref{Open Divisor},
			\[ \OGWbar_{\beta_1,k_1}^{\parentheses{|S_1|+2}}\big(\Gamma_a\otimes\Gamma_b\otimes\bigotimes\limits_{p\in S_1}\Gamma_{i_p}\big) = \int_{\beta_1}\Gamma_b\cdot\OGWbar_{\beta_1,0}^{\parentheses{|S_1|+1}}\big(\Gamma_a\otimes\bigotimes\limits_{p\in S_1}\Gamma_{i_p}\big). \]
			Since $|S_1|+1\leq\ell'-1<\ell'$, we deduce by minimality of $(\beta',\ell')$ that the term $\OGWbar_{\beta_1,k_1}^{\parentheses{|S_1|+2}}\big(\Gamma_a\otimes\Gamma_b\otimes\bigotimes\limits_{p\in S_1}\Gamma_{i_p}\big)$ vanishes.

			Thus, \eqref{Open WDVV for aux lemma 3, 1} has no non-vanishing factors of higher order in this case.
			\item  {$\omega(\beta_2) = \omega(\beta)$:} Then $\omega(\beta_1) = \omega(\beta') = \hbar$. If the term $\OGWbar_{\beta_2,k_2+2}^{\parentheses{|S_2|}}\big(\bigotimes\limits_{q\in S_2}\Gamma_{i_q}\big)$ has order higher or equal to $(\omega(\beta),k,0,0)$, then $k_2+2 = k$. In this case $k_1 = 0$, and by the divisor axiom \eqref{Open Divisor},
			\[ \OGWbar_{\beta_1,k_1}^{\parentheses{|S_1|+2}}\big(\Gamma_a\otimes\Gamma_b\otimes\bigotimes\limits_{p\in S_1}\Gamma_{i_p}\big) = \int_{\beta_1}\Gamma_b\cdot\OGWbar_{\beta_1,0}^{\parentheses{|S_1|+1}}\big(\Gamma_a\otimes\bigotimes\limits_{p\in S_1}\Gamma_{i_p}\big). \]
			
			Since $|S_1|+1\leq\ell'-1<\ell'$, we deduce by minimality of $(\beta',\ell')$ that the term $\OGWbar_{\beta_1,k_1}^{\parentheses{|S_1|+2}}\big(\Gamma_a\otimes\Gamma_b\otimes\bigotimes\limits_{p\in S_1}\Gamma_{i_p}\big)$ vanishes.
			
			As for the order of the term $\OGWbar_{\beta_1,k_1}^{\parentheses{|S_1|+2}}\big(\Gamma_a\otimes\Gamma_b\otimes\bigotimes\limits_{p\in S_1}\Gamma_{i_p}\big)$, note that $\omega(\beta_1)=\hbar\leq\omega(\beta)$ and $k_1\leq k-2<k$, so this term has in any case an order lower than $(\omega(\beta),k,0,0)$.
			
			Thus, \eqref{Open WDVV for aux lemma 3, 1} has no non-vanishing factors of higher order in this case.
			
		\end{enumerate}
		We conclude that \eqref{Open WDVV for aux lemma 3} has no nonzero terms where one of the factors has order higher or equal to $(\omega(\beta),k,0,0)$.
		
		We get the relation
		\begin{align}
			\OGWbar_{\beta+\beta',k-1}^{\parentheses{\ell'-1}}\big(\bigotimes_{p\in S}\Gamma_{i_p}\big)\hspace{8em}& \\
			\notag= -\sum\limits_{\substack{S_1\cupdot S_2=\\ S\backslash\{m\}}}\sum_{\substack{k_1,k_2\in\ZZ_{\geq 0}\\ k_1+k_2 = k-2}}\sum_{\substack{\beta_1,\beta_2\in\Pi_{\geq 0}\\ \beta_1+\beta_2=\beta+\beta'}}\binom{k-2}{k_1}&\OGWbar_{\beta_1,k_1+1}^{\parentheses{|S_1|+1}}\big(\Gamma_a\otimes\bigotimes\limits_{p\in S_1}\Gamma_{i_p}\big)\\
			\notag&\OGWbar_{\beta_2,k_2+1}^{\parentheses{|S_2|+1}}\big(\Gamma_b\otimes\bigotimes\limits_{q\in S_2}\Gamma_{i_q}\big) \\
			\notag-\sum\limits_{\substack{S_1\cupdot S_2=\\ S\backslash\left\{m\right\}}}\sum\limits_{0\leq \mu,\nu\leq \kappa}\sum\limits_{\substack{\beta_1\in H_2(X;\ZZ)\\ \beta_2\in \Pi_{\geq 0}\\ \omega(\beta_1)>0\\ \varpi(\beta_1)+\beta_2=\beta+\beta'}}(-1)^{w_s(\beta_1)}&\GW_{\beta_1}\big(\Delta_a\otimes\Delta_b\otimes\Delta_\mu\otimes\bigotimes\limits_{p\in S_1}\Delta_{i_p}\big)\\
			\notag g^{\mu\nu}&\OGWbar_{\beta_2,k-1}^{\parentheses{|S_2|+1}}\big(\Gamma_\nu\otimes\bigotimes\limits_{q\in S_2}\Gamma_{i_q}\big)\\
			\notag+\sum\limits_{\substack{S_1\cupdot S_2=\\S\backslash\left\{m\right\}}}\sum_{\substack{k_1,k_2\in\ZZ_{\geq 0}\\ k_1+k_2=k-2}}\sum_{\substack{\beta_1,\beta_2\in\Pi_{\geq 0}\\ \beta_1+\beta_2=\beta+\beta'}}\binom{k-2}{k_1}&\OGWbar_{\beta_1,k_1}^{\parentheses{|S_1|+2}}\big(\Gamma_a\otimes\Gamma_b\otimes\bigotimes\limits_{p\in S_1}\Gamma_{i_p}\big)\\
			\notag&\OGWbar_{\beta_2,k_2+2}^{\parentheses{|S_2|}}\big(\bigotimes\limits_{q\in S_2}\Gamma_{i_q}\big),
		\end{align}
		where the right-hand side contains only terms of order lower than $(\omega(\beta),k,0,0)$. Again, we note that the energy gap axiom \eqref{Energy Gap} implies that such terms of lower order are of degree $\beta$ or have energy lower than or equal to $\omega(\beta)-\hbar$.
		
		We conclude that the coefficient $\OGWbar_{\beta+\beta',k-1}^{\parentheses{\ell'-1}}\big(\bigotimes_{p\in S}\Gamma_{i_p}\big)$ is computable via coefficients of $\Omega$ of order lower than $(\omega(\beta),k,0,0)$, with energy $\omega(\beta)$ or less than or equal to $\omega(\beta)-\hbar$, and closed Gromov-Witten invariants.
	\end{proof}

	\begin{proof}[Proof of Lemma \ref{All OGW are Computable lemma pt. 3}]
		First, note that if $\OGWbar_{\beta,k}^{\parentheses{0}}$ does not satisfy the degree axiom \eqref{Open Degree}, then $\OGWbar_{\beta,k}^{\parentheses{0}}$ vanishes and thus is directly computable. Also, if $\beta = \beta_0$ or $\beta\neq\beta_0$ and $\omega(\beta)<\hbar$, then $\OGW_{\beta,k}^{\parentheses{0}}$ is directly computable via the zero axiom \eqref{Open Zero} and the energy gap axiom \eqref{Energy Gap}. We may assume without loss of generality, then, that $\OGW_{\beta,k}^{\parentheses{0}}$ satisfies the degree axiom and that $\omega(\beta)\geq \hbar=\omega(\beta')$.
		
		Now, let $1\leq j\leq \ell'$. First, we note that $|\Gamma_{i_j}|\geq 2$. If this were not the case then, since $H^2(X;\RR)$ generates $H^*(X;\RR)$, we would have $|\Gamma_{i_j}|=0$, and therefore $\Gamma_{i_j} = 1$. By the fundamental class axiom \eqref{Open Fundamental Class}, since $\beta'\neq\beta_0$, the coefficient $\OGWbar_{\beta',0}^{\parentheses{\ell'}}\big(\Gamma_{i_1}\otimes\dots\otimes\Gamma_{i_\ell'}\big)$ would vanish. Next, we note that $|\Gamma_{i_{j}}|>2$. If this were not the case, we would have $|\Gamma_{i_{j}}|=2$, so by the divisor axiom \eqref{Open Divisor} we would get a contradiction to the minimality of $\ell'$. Thus, for all $1\leq j\leq \ell'$, we have $\left|\Gamma_{i_j}\right|>2$.
		
		In the following proof, we use the second WDVV relation \eqref{Open WDVV Boundary Constraint} to consider the relations between coefficients of $\Omega$ with degrees that add up to $\beta+\beta'$. We have two different cases to consider: $\ell'\geq 2$ and $\ell'=1$.
		
		Assume first that $\ell'\geq 2$, and take $a=i_1$ and $b=i_2$. We apply the second open WDVV relation -- let us equate the coefficients $T^{\beta+\beta'}s^{k-2}\prod\limits_{j=3}^{\ell'}t_{i_j}$ in \eqref{Open WDVV Boundary Constraints Polynomial After Expansion}. This gives
		\begin{align}
			\label{Open WDVV For Gamma_a, Gamma_b Minimal 1} -\sum_{\substack{S_1\cupdot S_2\\ =\left\{3,\dots,\ell'\right\}}}\sum_{\substack{k_1,k_2\in\ZZ_{\geq 0}\\ k_1+k_2 = k-2}}\sum_{\substack{\beta_1,\beta_2\in\Pi_{\geq 0}\\ \beta_1+\beta_2=\beta+\beta'}}\frac{1}{k_1!k_2!}&\OGWbar_{\beta_1,k_1+1}^{\parentheses{|S_1|+1}}\big(\Gamma_a\otimes\bigotimes\limits_{p\in S_1}\Gamma_{i_p}\big)\\
			&\notag\OGWbar_{\beta_2,k_2+1}^{\parentheses{|S_2|+1}}\big(\Gamma_b\otimes\bigotimes\limits_{q\in S_2}\Gamma_{i_q}\big)\\
			\label{Open WDVV For Gamma_a, Gamma_b Minimal 2} =\sum_{\substack{S_1\cupdot S_2\\ =\left\{3,\dots,\ell'\right\}}}\sum\limits_{0\leq \mu,\nu\leq \kappa} \sum\limits_{\substack{\beta_1\in H_2(X;\ZZ)\\ \beta_2\in\Pi_{\geq 0}\\ \varpi(\beta_1)+\beta_2=\beta+\beta'}}\frac{(-1)^{w_s(\beta_1)}}{(k-2)!}&\GW_{\beta_1}\big(\Delta_a\otimes\Delta_b\otimes\Delta_\mu\bigotimes\limits_{p\in S_1}\Delta_{i_p}\big)\\
			&\notag g^{\mu\nu}\OGWbar_{\beta_2,k-1}^{\parentheses{|S_2|+1}}\big(\Gamma_\nu\otimes\bigotimes\limits_{q\in S_2}\Gamma_{i_q}\big)\\
			\label{Open WDVV For Gamma_a, Gamma_b Minimal 3} -\sum_{\substack{S_1\cupdot S_2\\ =\left\{3,\dots,\ell'\right\}}}\sum_{\substack{k_2,k_2\in\ZZ_{\geq 0}\\ k_1+k_2=k-2}}\sum_{\substack{\beta_1,\beta_2\in\Pi_{\geq 0}\\ \beta_1+\beta_2=\beta+\beta'}}\frac{1}{k_1!k_2!}&\OGWbar_{\beta_1,k_1}^{\parentheses{|S_1|+2}}\big(\Gamma_a\otimes\Gamma_b\otimes\bigotimes\limits_{p\in S_1}\Gamma_{i_p}\big)\\
			&\notag\OGWbar_{\beta_2,k_2+2}^{\parentheses{|S_2|}}\big(\bigotimes\limits_{q\in S_2}\Gamma_{i_q}\big).
		\end{align}
		
		We now look for nonzero terms of order $(\omega(\beta), k, 0, 0)$ or higher, and again we do so by considering each summand separately. In \eqref{Open WDVV For Gamma_a, Gamma_b Minimal 1}, all terms are of the form
		\begin{equation}
			\label{Open WDVV For Gamma_a, Gamma_b Minimal 1, 1}\OGWbar_{\beta_1,k_1+1}^{\parentheses{|S_1|+1}}\big(\Gamma_a\otimes\bigotimes\limits_{p\in S_1}\Gamma_{i_p}\big)\OGWbar_{\beta_2,k_2+1}^{\parentheses{|S_2|+1}}\big(\Gamma_b\otimes\bigotimes\limits_{q\in S_2}\Gamma_{i_q}\big),
		\end{equation}
		for some $\beta_1,\beta_2\in\Pi_{\geq 0}$ that satisfy $\beta_1+\beta_2=\beta+\beta'$, some $k_1,k_2\in\ZZ_{\geq 0}$ that satisfy $k_1+k_2=k-2$, and some disjoint sets $S_1,S_2$ that satisfy $S_1\cupdot S_2=\left\{3,\dots,\ell'\right\}$. If one of the factors in \eqref{Open WDVV For Gamma_a, Gamma_b Minimal 1, 1} has an order $(\omega(\beta),k,0,0)$ or higher, one of the following must be true:
		\begin{enumerate}
			\item\label{Open WDVV For Gamma_a, Gamma_b Minimal 1, 1, case a} {$\omega(\beta_1)>\omega(\beta)$:} Then
			\[  \omega(\beta_2) = \omega(\beta)+\omega(\beta')-\omega(\beta_1)  <\omega(\beta')=\hbar. \]
			By the energy gap axiom \eqref{Energy Gap}, if the value $\OGWbar_{\beta_2,k_2+2}\big(\Gamma_b\otimes\bigotimes\limits_{q\in S_2}\Gamma_{i_q}\big)$ does not vanish, then $\beta_2 = \beta_0$. By the zero axiom, and since $k_2+1\geq 1$ but $\Gamma_b\neq 1$, we deduce that $\OGWbar_{\beta_2,k_2+1}^{\parentheses{|S_2|+1}}\big(\Gamma_b\otimes\bigotimes\limits_{q\in S_2}\Gamma_{i_q}\big)$ still vanishes. We deduce that in this case \eqref{Open WDVV For Gamma_a, Gamma_b Minimal 1, 1} reduces to zero.
			\item\label{Open WDVV For Gamma_a, Gamma_b Minimal 1, 1, case b} {$\omega(\beta_1)=\omega(\beta)$:} In this case $k_1\leq k-2$ and therefore $k_1+1\leq k-2+1=k-1<k$, so the term $\OGWbar_{\beta_1,k_1+1}^{\parentheses{|S_1|+1}}\big(\Gamma_a\otimes\bigotimes\limits_{p\in S_1}\Gamma_{i_p}\big)$ has order lower than $(\omega(\beta),k,0,0)$. As for $\OGWbar_{\beta_2,k_2+1}^{\parentheses{|S_2|+1}}\big(\Gamma_b\otimes\bigotimes\limits_{q\in S_2}\Gamma_{i_q}\big)$, note that in this case $\omega(\beta_2)=\omega(\beta')=\hbar\leq\omega(\beta)$, and $k_2+1\leq k-1<k$, so this term also has lower order than $(\omega(\beta),k,0,0)$. We conclude that in this case \eqref{Open WDVV For Gamma_a, Gamma_b Minimal 1, 1} has no higher order factors.
			
			\item\label{Open WDVV For Gamma_a, Gamma_b Minimal 1, 1, case c} {$\omega(\beta_2)>\omega(\beta)$:} By an identical argument to case \ref{Open WDVV For Gamma_a, Gamma_b Minimal 1, 1, case a}, the term \eqref{Open WDVV For Gamma_a, Gamma_b Minimal 1, 1} reduces to zero in this case.
			\item\label{Open WDVV For Gamma_a, Gamma_b Minimal 1, 1, case d} {$\omega(\beta_2)=\omega(\beta)$:} By an identical argument to case \ref{Open WDVV For Gamma_a, Gamma_b Minimal 1, 1, case b}, the term \eqref{Open WDVV For Gamma_a, Gamma_b Minimal 1, 1} has order lower than $(\omega(\beta),k,0,0)$ in this case.
		\end{enumerate}
		We see that \eqref{Open WDVV For Gamma_a, Gamma_b Minimal 1, 1} has no non-vanishing terms where one of the factors has order greater than or equal to $(\omega(\beta),k,0,0)$.
		
		In \eqref{Open WDVV For Gamma_a, Gamma_b Minimal 2}, the higher order terms are of the form
		\begin{equation}
			\label{Open WDVV For Gamma_a, Gamma_b Minimal 2, 1}\sum\limits_{0\leq \mu,\nu\leq K } \GW_{\beta_1}\big(\Delta_a\otimes\Delta_b\otimes\Delta_\mu\otimes\bigotimes\limits_{p\in S_1}\Delta_{i_p}\big)g^{\mu\nu}\OGWbar_{\beta_2,k-1}^{\parentheses{|S_2|+1}}\big(\Gamma_\nu\otimes\bigotimes\limits_{q\in S_2}\Gamma_{i_q}\big),
		\end{equation}
		for some $\beta_1\in H_2(X;\ZZ)$ and $\beta_2\in\Pi_{\geq 0}$ that satisfy $\varpi(\beta_1)+\beta_2=\beta+\beta'$, and some disjoint sets $S_1,S_2$ that satisfy $S_1\cupdot S_2=\left\{3,\dots,\ell'\right\}$. For the $\OGWbar$ factor in \eqref{Open WDVV For Gamma_a, Gamma_b Minimal 2, 1} to have order $(\omega(\beta),k,0,0)$ or higher, one of the following must hold:
		\begin{enumerate}
			\item  {$\beta_1 = 0$:} By the zero axiom \eqref{Closed Zero}, term \eqref{Open WDVV For Gamma_a, Gamma_b Minimal 2, 1} does not vanish in this case only if $|S_1|=0$. This implies that $S_2 = \left\{3,\dots,\ell'\right\}$, so \eqref{Open WDVV For Gamma_a, Gamma_b Minimal 2, 1} reduces, by Lemma \ref{g is block matrix when needed}, to
			\begin{align*}
				\sum\limits_{0\leq \mu,\nu\leq \kappa} \GW_{0}&\big(\Delta_a\otimes\Delta_b\otimes\Delta_\mu\big)g^{\mu\nu}\OGWbar_{\beta_2,k-1}^{\parentheses{\ell'-1}}\big(\Gamma_\nu\otimes\Gamma_{i_3}\otimes\dots\otimes\Gamma_{i_{\ell'}}\big) \\
				=&\sum\limits_{0\leq \mu,\nu\leq \kappa} \big(\int_X\Delta_a\wedge\Delta_b\wedge\Delta_\mu\big) g^{\mu\nu}\OGWbar_{\beta+\beta',k-1}^{\parentheses{\ell-1}}\big(\Gamma_\nu\otimes\Gamma_{i_3}\otimes\dots\otimes\Gamma_{i_{\ell'}}\big) \\
				=&\OGWbar_{\beta+\beta',k-1}^{\parentheses{\ell'-1}}\big(\big(\sum\limits_{0\leq \mu,\nu\leq \kappa } \int_X\Delta_a\wedge\Delta_b\wedge\Delta_\mu\big) g^{\mu\nu}\Gamma_\nu\otimes\Gamma_{i_3}\otimes\dots\otimes\Gamma_{i_{\ell'}}\big) \\
				=&\OGWbar_{\beta+\beta',k-1}^{\parentheses{\ell'-1}}\big(\Delta_a\wedge\Delta_b\otimes\Gamma_{i_3}\otimes\dots\otimes\Gamma_{i_{\ell'}}\big)\\
				=&\OGWbar_{\beta+\beta',k-1}^{\parentheses{\ell'-1}}\big(\Gamma_{i_1}\wedge\Gamma_{i_2}\otimes\Gamma_{i_3}\otimes\dots\otimes\Gamma_{i_{\ell'}}\big).
			\end{align*}
			By linearity of the homomorphism $\OGW_{\beta+\beta',k-1}^{\parentheses{\ell'-1}}$, we may assume without loss of generality that $\Gamma_{i_1}\wedge\Gamma_{i_2}$ is a basis element of $H^*(X;\RR)$. By Lemma \ref{All OGW are Computable lemma pt. 3, aux}, the term $\OGWbar_{\beta+\beta',k-1}^{\parentheses{\ell'-1}}\big(\Gamma_{i_1}\wedge\Gamma_{i_2}\otimes\Gamma_{i_3}\otimes\dots\otimes\Gamma_{i_{\ell'}}\big)$ is computable by coefficients of order lower than $(\omega(\beta),k,0,0)$,  with energy $\omega(\beta)$ or energy lower than or equal to $\omega(\beta)-\hbar$, and closed Gromov-Witten invariants. Therefore, \eqref{Open WDVV For Gamma_a, Gamma_b Minimal 2, 1} is computable, in this case, by coefficients of order lower than $(\omega(\beta),k,0,0)$, with energy $\omega(\beta)$ or energy lower than or equal to $\omega(\beta)-\hbar$, and closed Gromov-Witten invariants.
			\item  {$\beta_1\neq 0$ and $\omega(\beta_2)>\omega(\beta)$:} In this case,
			\[
			\omega(\varpi(\beta_1)) = \omega(\beta)+\omega(\beta') - \omega(\beta_2)<\omega(\beta')=\hbar.
			\]
			By Lemma \ref{If varpi(beta)=0 then beta=0}, we deduce that the invariant $\GW_{\beta_1}\big(\Delta_a\otimes\Delta_b\otimes\Delta_\mu\otimes\bigotimes\limits_{p\in S_1}\Delta_{i_p}\big)$ vanishes. Thus, in this case, the term \eqref{Open WDVV For Gamma_a, Gamma_b Minimal 2, 1} reduces to zero.
			\item  {$\omega(\beta_2)=\omega(\beta)$:} In this case, the coefficient  $\OGWbar_{\beta_2,k-1}^{\parentheses{|S_2|+1}}\big(\Gamma_\nu\otimes\bigotimes\limits_{q\in S_2}\Gamma_{i_q}\big)$ has lower order than $(\omega(\beta),k,0,0)$.
		\end{enumerate}
		We conclude that all terms in \eqref{Open WDVV For Gamma_a, Gamma_b Minimal 2} are computable via coefficients of $\Omega$ of order lower than $(\omega(\beta),k,0,0)$.
		
		In \eqref{Open WDVV For Gamma_a, Gamma_b Minimal 3}, the higher order terms are of the form
		\begin{equation}
			\label{Open WDVV For Gamma_a, Gamma_b Minimal 3, 1}\OGWbar_{\beta_1,k_1}^{\parentheses{|S_1|+2}}\big(\Gamma_a\otimes\Gamma_b\otimes\bigotimes\limits_{p\in S_1}\Gamma_{i_p}\big)\OGWbar_{\beta_2,k_2+2}^{\parentheses{|S_2|}}\big(\bigotimes\limits_{q\in S_2}\Gamma_{i_q}\big),
		\end{equation}
		for some $\beta_1,\beta_2\in\Pi_{\geq 0}$ that satisfy $\beta_1+\beta_2=\beta+\beta'$, some $k_1,k_2\in\ZZ_{\geq 0}$ that satisfy $k_1+k_2=k-2$, and some disjoint sets $S_1,S_2$ that satisfy $S_1\cupdot S_2=\left\{3,\dots,\ell'\right\}$. For one of the factors in \eqref{Open WDVV For Gamma_a, Gamma_b Minimal 3, 1} to have order $(\omega(\beta),k,0,0)$ or higher, one of the following must be true:
		\begin{enumerate}
			\item\label{Open WDVV For Gamma_a, Gamma_b Minimal 3, case a} {$\omega(\beta_1)>\omega(\beta)$:} In this case, we deduce $\omega(\beta_2)<\omega(\beta')=\hbar$. If $\beta_2 = \beta_0$, note that $k_2+2\geq 2$, so the zero axiom \eqref{Open Zero} implies that $\OGWbar_{\beta_2,k_2+2}^{\parentheses{|S_2|}}\big(\bigotimes\limits_{q\in S_2}\Gamma_{i_q}\big)$ vanishes. If $\beta_2\neq\beta_0$, the energy gap axiom \eqref{Energy Gap} implies that $\OGWbar_{\beta_2,k_2+2}^{\parentheses{|S_2|}}\big(\bigotimes\limits_{q\in S_2}\Gamma_{i_q}\big)$ vanishes. In any case, \eqref{Open WDVV For Gamma_a, Gamma_b Minimal 3, 1} reduces to zero.
			\item\label{Open WDVV For Gamma_a, Gamma_b Minimal 3, case b} {$\omega(\beta_1)=\omega(\beta)$:} In this case  $k_1\leq k-2<k$, so $\OGWbar_{\beta_1,k_1}^{\parentheses{|S_1|+2}}\big(\Gamma_a\otimes\Gamma_b\otimes\bigotimes\limits_{p\in S_1}\Gamma_{i_p}\big)$ has lower order than $(\omega(\beta),k,0,0)$. As for $\OGWbar_{\beta_2,k_2+2}^{\parentheses{|S_2|}}\big(\bigotimes\limits_{q\in S_2}\Gamma_{i_q}\big)$, note that $\omega(\beta_2) = \omega(\beta')$ in this case.
			
			If $\omega(\beta)>\omega(\beta') = \omega(\beta_2)$, then the order of $\OGWbar_{\beta_2,k_2+2}^{\parentheses{|S_2|}}\big(\bigotimes\limits_{q\in S_2}\Gamma_{i_q}\big)$ is lower order than $(\omega(\beta),k,0,0)$.
			
			Otherwise, $\omega(\beta) = \omega(\beta') = \omega(\beta_2)$. In this case, for $\OGWbar_{\beta_2,k_2+2}^{\parentheses{|S_2|}}\big(\bigotimes\limits_{q\in S_2}\Gamma_{i_q}\big)$ to have order $(\omega(\beta),k,0,0)$ or higher, we must have $k_2 +2 =k$. This implies, in turn, that $k_1=0$.
			By minimality of the pair $(\beta',\ell')$, we deduce that $|S_1|+2 = \ell'$, so $S_1 = \left\{3,\dots,\ell'\right\}$ and $S_2 = \emptyset$. Let us separate into two sub-cases:
			
			\begin{enumerate}
				\item \label{nonzero multiple of OGW_beta,k 1} {$\beta_1 = \beta'$:} Then \eqref{Open WDVV For Gamma_a, Gamma_b Minimal 3, 1} reduces to
				\[ \OGWbar_{\beta', 0}^{\parentheses{\ell'}}\big(\Gamma_{i_1}\otimes\Gamma_{i_2}\otimes\dots\otimes\Gamma_{i_{\ell'}}\big)\OGWbar_{\beta,k}^{\parentheses{0}}, \]
				which is a nonzero multiple of the term we are looking to compute.
				\item  {$\beta_1\neq\beta'$:} Then (since $\omega(\beta_1) = \omega(\beta')$ and since $\Pi = H_2(X,L;\ZZ)/(\ker\omega\oplus\ker\mu)$) we have $\mu(\beta_1)\neq \mu(\beta')$. By the degree axiom \eqref{Open Degree}, the fact that the value $\OGWbar_{\beta', 0}^{\parentheses{\ell'}}\big(\Gamma_{i_1}\otimes\Gamma_{i_2}\otimes\dots\otimes\Gamma_{i_\ell'}\big)$ does not vanish implies that
				\[ n-3 + \mu(\beta') + 2\ell' = \sum\limits_{j=1}^{\ell'}\left|\Gamma_{i_j}\right|, \]
				and therefore
				\[ n-3 + \mu(\beta_1) + 2\ell' \neq \sum\limits_{j=1}^{\ell'}\left|\Gamma_{i_j}\right|. \]
				
				We deduce by the degree axiom, again, that
				\[  \OGWbar_{\beta_1, 0}^{\parentheses{\ell'}}\big(\Gamma_{i_1}\otimes\Gamma_{i_2}\otimes\dots\otimes\Gamma_{i_\ell'}\big) = 0, \]
				and therefore \eqref{Open WDVV For Gamma_a, Gamma_b Minimal 3, 1} vanishes in this case.
			\end{enumerate}
			
			We conclude that there are no non-vanishing higher order factors in \eqref{Open WDVV For Gamma_a, Gamma_b Minimal 3, 1} in this case, except for a nonzero multiple of $\OGW_{\beta,k}^{\parentheses{0}}$.
			\item\label{Open WDVV For Gamma_a, Gamma_b Minimal 3, case c} {$\omega(\beta_2)>\omega(\beta)$:} In this case, we deduce $\omega(\beta_1)<\omega(\beta')=\hbar$. If $\beta_1 = \beta_0$, the zero axiom \eqref{Open Zero} and the fact that $|S_1|+2\geq 2$ implies that $k_1 = 0$ and that
			\[ \OGWbar_{\beta_1,k_1}^{\parentheses{|S_1|+2}}\big(\Gamma_a\otimes\Gamma_b\bigotimes\limits_{p\in S_1}\Gamma_{i_p}\big) = P_\RR(\Gamma_a\wedge\Gamma_b) = 0, \]
			since $P_\RR$ vanishes on $H^*(X,\RR)$. If $\beta_1\neq\beta_0$, the energy gap axiom \eqref{Energy Gap} implies that $\OGWbar_{\beta_1,k_1}^{\parentheses{|S_1|+2}}\big(\Gamma_a\otimes\Gamma_b\bigotimes\limits_{p\in S_1}\Gamma_{i_p}\big)$ vanishes. In any case, \eqref{Open WDVV For Gamma_a, Gamma_b Minimal 3, 1} reduces to zero.
			
			\item\label{Open WDVV For Gamma_a, Gamma_b Minimal 3, case d} {$\omega(\beta_2)=\omega(\beta)$:} Note that $\omega(\beta_1) = \omega(\beta')$ in this case. Let us look into the order of $\OGWbar_{\beta_1,k_1}^{\parentheses{|S_1|+2}}\big(\Gamma_a\otimes\Gamma_b\otimes\bigotimes\limits_{p\in S_1}\Gamma_{i_p}\big)$ first.
			
			If $\omega(\beta)>\omega(\beta')$, then the coefficient  $\OGWbar_{\beta_1,k_1}^{\parentheses{|S_1|+2}}\big(\Gamma_a\otimes\Gamma_b\otimes\bigotimes\limits_{p\in S_1}\Gamma_{i_p}\big)$ has lower order than $(\omega(\beta),k,0,0)$.
			
			If $\omega(\beta) = \omega(\beta')$, we note that $k_1\leq k-2<k$, and therefore the coefficient $\OGWbar_{\beta_1,k_1}^{\parentheses{|S_1|+2}}\big(\Gamma_a\otimes\Gamma_b\otimes\bigotimes\limits_{p\in S_1}\Gamma_{i_p}\big)$ has lower order than $(\omega(\beta),k,0,0)$.
			
			As for $\OGWbar_{\beta_2,k_2+2}^{\parentheses{|S_2|}}\big(\bigotimes\limits_{q\in S_2}\Gamma_{i_q}\big)$, for this term to have order $(\omega(\beta),k,0,0)$ or higher, we must have $k_2+2=k$, and thus $k_1 = 0$.
			
			Let us now divide into three different sub-cases:
			\begin{enumerate}
				\item  {$|S_2|>0$:} In this case, $|S_1|+2<\ell'-2+2 = \ell'$. By minimality of the pair $(\beta',\ell')$, we deduce that $\OGWbar_{\beta_1,0}^{\parentheses{|S_1|+2}}\big(\Gamma_a\otimes\Gamma_b\otimes\bigotimes\limits_{p\in S_1}\Gamma_{i_p}\big)$ vanishes, and therefore the term \eqref{Open WDVV For Gamma_a, Gamma_b Minimal 3, 1} reduces to zero.
				\item  {$|S_2|=0$ and $\beta_2 = \beta$:} Then $S_1=\left\{3,\dots,\ell'\right\}$ and \eqref{Open WDVV For Gamma_a, Gamma_b Minimal 3, 1} reduces to
				\begin{equation*}
					\OGWbar_{\beta',0}^{\parentheses{\ell'}}\left(\Gamma_{i_1}\otimes\Gamma_{i_2}\otimes\Gamma_{i_3}\otimes\dots\otimes\Gamma_{i_{\ell'}}\right)\OGWbar_{\beta,k}^{\parentheses{0}},
				\end{equation*}
				which by our assumption is a nonzero multiple of $\OGWbar_{\beta,k}^{\parentheses{0}}$. Note that this is the case already considered in \eqref{nonzero multiple of OGW_beta,k 1}.
				\item  {$|S_2|=0$ and $\beta_2 \neq \beta$:} Then $\beta_1\neq \beta'$ but $\omega(\beta_1) = \omega(\beta')$. Since
				\[  \Pi = H_2(X,L;\ZZ)/(\ker\omega\oplus\ker\mu), \]
				we deduce that $\mu(\beta_1)\neq \mu(\beta')$. By the degree axiom \eqref{Open Degree}, the fact that the value $\OGWbar_{\beta', 0}^{\parentheses{\ell'}}\big(\Gamma_{i_1}\otimes\Gamma_{i_2}\otimes\dots\otimes\Gamma_{i_\ell'}\big)$ does not vanish implies that
				\[ n-3 + \mu(\beta') + 2\ell' = \sum\limits_{j=1}^{\ell'}\left|\Gamma_{i_j}\right|, \]
				and therefore
				\[ n-3 + \mu(\beta_1) + 2\ell' \neq \sum\limits_{j=1}^{\ell'}\left|\Gamma_{i_j}\right|. \]
				
				We deduce by the degree axiom, again, that
				\[  \OGWbar_{\beta_1, 0}^{\parentheses{\ell'}}\big(\Gamma_{i_1}\otimes\Gamma_{i_2}\otimes\dots\otimes\Gamma_{i_{\ell'}}\big) = 0. \]
			\end{enumerate}
			
			We conclude that in this case the term \eqref{Open WDVV For Gamma_a, Gamma_b Minimal 3, 1} has no terms with factors of order higher than $(\omega(\beta),k,0,0)$, and the only non-vanishing term with a factor of order $(\omega(\beta),k,0,0)$ is a nonzero multiple of $\OGWbar_{\beta,k}^{\parentheses{0}}$.
		\end{enumerate}
		
		We conclude that \eqref{Open WDVV For Gamma_a, Gamma_b Minimal 3} has no terms with factors of order higher than $(\omega(\beta),k,0,0)$, and the only term of order $(\omega(\beta),k,0,0)$ is a nonzero multiple of $\OGWbar_{\beta,k}^{\parentheses{0}}$.
		
		We therefore get the relation
		\begin{align}
			A\cdot\OGWbar_{\beta,k}^{\parentheses{0}} = -\sum_{\substack{S_1\cupdot S_2\\ =\left\{3,\dots,\ell'\right\}}}\sum_{\substack{k_1,k_2\in\ZZ_{\geq 0}\\ k_1+k_2 = k-2}}\sum_{\substack{\beta_1,\beta_2\in\Pi_{\geq 0}\\ \beta_1+\beta_2=\beta+\beta'}}\binom{k-2}{k_1}&\OGWbar_{\beta_1,k_1+1}^{\parentheses{|S_1|+1}}\big(\Gamma_a\otimes\bigotimes\limits_{p\in S_1}\Gamma_{i_p}\big)\\
			&\notag\OGWbar_{\beta_2,k_2+1}^{\parentheses{|S_2|+1}}\big(\Gamma_b\otimes\bigotimes\limits_{q\in S_2}\Gamma_{i_q}\big)\\
			\notag-\sum_{\substack{S_1\cupdot S_2\\ =\left\{3,\dots,\ell'\right\}}}\sum\limits_{0\leq \mu,\nu\leq \kappa} \sum\limits_{\substack{\beta_1\in H_2(X;\ZZ)\\ \beta_2\in\Pi_{\geq 0}\\ \varpi(\beta_1)+\beta_2=\beta+\beta'}}(-1)^{w_s(\beta_1)}&\GW_{\beta_1}\big(\Delta_a\otimes\Delta_b\otimes\Delta_\mu\bigotimes\limits_{p\in S_1}\Delta_{i_p}\big)\\
			\notag g^{\mu\nu}&\OGWbar_{\beta_2,k-1}^{\parentheses{|S_2|+1}}\big(\Gamma_\nu\otimes\bigotimes\limits_{q\in S_2}\Gamma_{i_q}\big)\\
			\notag+\sum_{\substack{S_1\cupdot S_2\\ =\left\{3,\dots,\ell'\right\}}}\sum_{\substack{k_2,k_2\in\ZZ_{\geq 0}\\ k_1+k_2=k-2}}\sum_{\substack{\beta_1,\beta_2\in\Pi_{\geq 0}\\ (\beta_2,k_2)\neq(\beta,k-2)\\ \beta_1+\beta_2=\beta+\beta'}}\binom{k-2}{k_1}&\OGWbar_{\beta_1,k_1}^{\parentheses{|S_1|+2}}\big(\Gamma_a\otimes\Gamma_b\otimes\bigotimes\limits_{p\in S_1}\Gamma_{i_p}\big)\\
			\notag&\OGWbar_{\beta_2,k_2+2}^{\parentheses{|S_2|}}\big(\bigotimes\limits_{q\in S_2}\Gamma_{i_q}\big),
		\end{align}
		where $	A = \OGWbar_{\beta',0}^{\parentheses{\ell'}}\big(\Gamma_{i_1}\otimes\dots\otimes\Gamma_{i_{\ell'}}\big)\neq 0$ and the right-hand side contains only terms of lower order. Again, note that the energy gap axiom \eqref{Energy Gap} implies that such terms of lower order have energy $\omega(\beta)$ or have energy lower than or equal to $\omega(\beta)-\hbar$.
		
		We conclude that the coefficient $\OGWbar_{\beta,k}^{\parentheses{0}}$ is computable via coefficients of $\Omega$ of a lower order, with energy $\omega(\beta)$ or energy lower than or equal to $\omega(\beta)-\hbar$, and closed Gromov-Witten invariants.
		
		Assume now that $\ell'=1$. Note that the Poincare dual to $\beta'$ is a divisor that satisfies $\int_{\beta'}\text{PD}(\beta')\neq 0$. Therefore, there exists some $1\leq a\leq K$ such that $\Gamma_a$ is a divisor satisfying $\int_{\beta'}\Gamma_a\neq 0$.
		
		Take $b=i_1$. Note that by the divisor axiom \eqref{Open Divisor},
		\begin{equation*}
			\OGWbar_{\beta',0}^{\parentheses{2}}\big(\Gamma_a\otimes\Gamma_b\big) = \big(\int_{\beta'}
			\Gamma_a\big)\cdot \OGWbar_{\beta',0}^{\parentheses{1}}\big(\Gamma_b\big)\neq 0.
		\end{equation*}
		
		We now apply the second open WDVV relation -- let us equate the coefficients of $T^{\beta+\beta'}s^{k-2}$ in \eqref{Open WDVV Boundary Constraints Polynomial After Expansion}. This gives
		\begin{align}
			\label{Open WDVV For Gamma_a, Gamma_b Minimal 1b} -\sum_{\substack{\beta_1,\beta_2\in\Pi_{\geq 0}\\ \beta_1+\beta_2=\beta+\beta'}}\sum_{\substack{k_1,k_2\in\ZZ_{\geq 0}\\ k_1+k_2 = k-2}}\frac{1}{k_1!k_2!}
			&\OGWbar_{\beta_1,k_1+1}^{\parentheses{1}}\big(\Gamma_a\big)\OGWbar_{\beta_2,k_2+1}^{\parentheses{1}}\big(\Gamma_b\big) \\
			\label{Open WDVV For Gamma_a, Gamma_b Minimal 2b} =\sum\limits_{0\leq \mu,\nu\leq \kappa}\sum\limits_{\substack{\beta_1\in H_2(X;\ZZ)\\ \beta_2\in\Pi_{\geq 0}\\ \varpi(\beta_1)+\beta_2=\beta+\beta'}}\frac{(-1)^{w_s(\beta_1)}}{(k-2)!}&\GW_{\beta_1}\big(\Delta_a\otimes\Delta_b\otimes\Delta_\mu\big)g^{\mu\nu}\OGWbar_{\beta_2,k-1}^{\parentheses{1}}\big(\Gamma_\nu\big)\\
			\label{Open WDVV For Gamma_a, Gamma_b Minimal 3b} -\sum_{\substack{\beta_1,\beta_2\in\Pi_{\geq 0}\\ \beta_1+\beta_2=\beta+\beta'}}\sum_{\substack{k_1,k_2\in\ZZ_{\geq 0}\\ k_1+k_2=k-2}}\frac{1}{k_1!k_2!}&\OGWbar_{\beta_1,k_1}^{\parentheses{2}}\big(\Gamma_a\otimes\Gamma_b\big)\OGWbar_{\beta_2,k_2+2}^{\parentheses{0}}.
		\end{align}
		
		Let us look for terms of order $(\omega(\beta), k, 0, 0)$ or higher. In \eqref{Open WDVV For Gamma_a, Gamma_b Minimal 1b}, the higher order terms must be of the form
		\begin{equation*}
			\label{Open WDVV For Gamma_a, Gamma_b Minimal 1b, 1}\OGWbar_{\beta_1,k_1+1}^{\parentheses{1}}\big(\Gamma_a\big)\OGWbar_{\beta_2,k_2+1}^{\parentheses{1}}\big(\Gamma_b\big)
		\end{equation*}
		for some $\beta_1,\beta_2\in\Pi_{\geq 0}$ that satisfy $\beta_1+\beta_2=\beta+\beta'$ and some $k_1,k_2\geq 0$ that satisfy $k_1+k_2=k-2$. For one of the factors in \eqref{Open WDVV For Gamma_a, Gamma_b Minimal 1b, 1} to have order $(\omega(\beta),k,0,0)$ or higher, one of the following must be true:
		\begin{enumerate}
			\item\label{Open WDVV For Gamma_a, Gamma_b Minimal 1b, 1, case a} {$\omega(\beta_1)>\omega(\beta)$:} In this case, $\omega(\beta_2)<\omega(\beta)=\hbar$. If $\beta_2 = \beta_0$, then since $\Gamma_b\neq 1$ and $k_2+1\geq 1$, we get by the zero axiom \eqref{Open Zero}:
			\begin{equation*}
				\OGWbar_{\beta_2,k_2+1}^{\parentheses{1}}\big(\Gamma_b\big) = 0.
			\end{equation*}
			If $\beta_2\neq \beta_0$, the coefficient $\OGWbar_{\beta_2,k_2+1}^{\parentheses{1}}\big(\Gamma_b\big)$ vanishes by the energy gap axiom \eqref{Energy Gap}. In any case, $\OGWbar_{\beta_2,k_2+1}^{\parentheses{1}}\big(\Gamma_b\big)$ vanishes, so \eqref{Open WDVV For Gamma_a, Gamma_b Minimal 1b, 1} reduces to zero in this case.
			
			\item\label{Open WDVV For Gamma_a, Gamma_b Minimal 1b, 1, case b} {$\omega(\beta_1)=\omega(\beta)$:} In this case $k_1\leq k-2$ and therefore $k_1+1\leq k-1<k$, so $\OGWbar_{\beta_1,k_1+1}^{\parentheses{1}}\big(\Gamma_a\big)$ has lower order than $(\omega(\beta),k,0,0)$. As for $\OGWbar_{\beta_2,k_2+1}^{\parentheses{1}}\big(\Gamma_b\big)$, note that in this case $\omega(\beta_2) = \omega(\beta')=\hbar\leq\omega(\beta)$.
			
			If $\omega(\beta)>\omega(\beta_2)$, then $\OGWbar_{\beta_2,k_2+1}^{\parentheses{1}}\big(\Gamma_b\big)$ has lower order than $(\omega(\beta),k,0,0)$. If $\omega(\beta)=\omega(\beta_2)$, note that $k_2+1\leq k-1<k$, so again $\OGWbar_{\beta_2,k_2+1}^{\parentheses{1}}\big(\Gamma_b\big)$ has lower order than $(\omega(\beta),k,0,0)$. In any case, we get that \eqref{Open WDVV For Gamma_a, Gamma_b Minimal 1b, 1} has lower order than $(\omega(\beta),k,0,0)$.
			\item\label{Open WDVV For Gamma_a, Gamma_b Minimal 1b, 1, case c} {$\omega(\beta_2)>\omega(\beta)$:} In this case, $\omega(\beta_1)<\omega(\beta)=\hbar$. If $\beta_1 = \beta_0$, then since $\Gamma_a\neq 1$ and $k_1+1\geq 1$, we get by the zero axiom \eqref{Open Zero}:
			\begin{equation*}
				\OGWbar_{\beta_1,k_1+1}^{\parentheses{1}}\big(\Gamma_a\big) = 0.
			\end{equation*}
			If $\beta_1\neq \beta_0$, the coefficient $\OGWbar_{\beta_1,k_1+1}^{\parentheses{1}}\big(\Gamma_a\big)$ vanishes by the energy gap axiom \eqref{Energy Gap}. In any case, $\OGWbar_{\beta_1,k_1+1}^{\parentheses{1}}\big(\Gamma_a\big)$ vanishes, so \eqref{Open WDVV For Gamma_a, Gamma_b Minimal 1b, 1} reduces to zero in this case.
			\item\label{Open WDVV For Gamma_a, Gamma_b Minimal 1b, 1, case d} {$\omega(\beta_2)=\omega(\beta)$:} By an identical argument to the case \ref{Open WDVV For Gamma_a, Gamma_b Minimal 1b, 1, case b}, the term \eqref{Open WDVV For Gamma_a, Gamma_b Minimal 1b, 1} has lower order than $(\omega(\beta),k,0,0)$ in this case.
		\end{enumerate}
		We conclude that \eqref{Open WDVV For Gamma_a, Gamma_b Minimal 1b} contains no terms with non-vanishing factors of order higher than or equal to $(\omega(\beta),k,0,0)$.
		
		In \eqref{Open WDVV For Gamma_a, Gamma_b Minimal 2b}, the higher order terms must be of the form
		\begin{equation}
			\label{Open WDVV For Gamma_a, Gamma_b Minimal 2b, 1}\sum\limits_{0\leq \mu,\nu\leq \kappa} \GW_{\beta_1}\left(\Delta_a\otimes\Delta_b\otimes\Delta_\mu\right)g^{\mu\nu}\OGWbar_{\beta_2,k-1}^{\parentheses{1}}\big(\Gamma_\nu\big),
		\end{equation}
		for some $\beta_1\in H_2(X;\RR)$ and $\beta_2\in\Pi_{\geq 0}$ that satisfy $\varpi(\beta_1)+\beta_2=\beta+\beta'$. For the $\OGWbar$ factor in \eqref{Open WDVV For Gamma_a, Gamma_b Minimal 2b, 1} to have order $(\omega(\beta),k,0,0)$ or higher, one of the following must hold:
		\begin{enumerate}
			\item  {$\beta_1 = 0$:} By the zero axiom \eqref{Closed Zero}, the term \eqref{Open WDVV For Gamma_a, Gamma_b Minimal 2b, 1} reduces in this case, by Lemma \ref{g is block matrix when needed}, to
			\begin{align*}
				\sum\limits_{0\leq \mu,\nu\leq \kappa} \GW_{0}&\big(\Delta_a\otimes\Delta_b\otimes\Delta_\mu\big)g^{\mu\nu}\OGWbar_{\beta_2,k-1}^{\parentheses{1}}\big(\Gamma_\nu\big) \\
				=&\sum\limits_{0\leq \mu,\nu\leq \kappa} \big(\int_X\Delta_a\wedge\Delta_b\wedge\Delta_\mu\big) g^{\mu\nu}\OGWbar_{\beta+\beta',k-1}^{\parentheses{1}}\big(\Gamma_\nu\big) \\
				=&\OGWbar_{\beta+\beta',k-1}^{\parentheses{1}}\big(\big(\sum\limits_{0\leq \mu,\nu\leq \kappa} \int_X\Delta_a\wedge\Delta_b\wedge\Delta_\mu\big) g^{\mu\nu}\Gamma_\nu\big) \\
				=&\OGWbar_{\beta+\beta',k-1}^{\parentheses{1}}\big(\Gamma_a\wedge\Gamma_b\big).
			\end{align*}
			Recall that we assume that $\left|\Gamma_{i_1}\right| = 2n$. Since $\left|\Gamma_a\right|=2>0$, we deduce that
			\[ \Gamma_{a}\wedge\Gamma_{b} = \Gamma_{a}\wedge\Gamma_{i_1} = 0. \]
			By linearity of the homomorphism $\OGW_{\beta+\beta',k-1}^{\parentheses{1}}$, it follows that the term
			\[  \OGWbar_{\beta+\beta',k-1}^{\parentheses{1}}\big(\Gamma_{a}\wedge\Gamma_{b}\big)\]
			vanishes. Thus, in this case, the term \eqref{Open WDVV For Gamma_a, Gamma_b Minimal 2b, 1} reduces to zero.
			
			\item  {$\beta_1\neq 0$ and $\omega(\beta_2)>\omega(\beta)$:} In this case,
			\[
			\omega(\varpi(\beta_1)) = \omega(\beta)+\omega(\beta') - \omega(\beta_2)<\omega(\beta')=\hbar.
			\]
			By Lemma \ref{If varpi(beta)=0 then beta=0}, we deduce that the invariant $\GW_{\beta_1}\big(\Delta_a\otimes\Delta_b\otimes\Delta_\mu\big)$ vanishes. Thus, in this case, the term \eqref{Open WDVV For Gamma_a, Gamma_b Minimal 2b, 1} reduces to zero.
			\item  {$\omega(\beta_2)=\omega(\beta)$:} In this case, $\OGWbar_{\beta_2,k-1}^{\parentheses{1}}\big(\Gamma_\nu\big)$ has lower order than $(\omega(\beta),k,0,0)$.
		\end{enumerate}

		We conclude that \eqref{Open WDVV For Gamma_a, Gamma_b Minimal 2b} has no non-vanishing terms where the $\OGWbar$ factor is of order higher or equal to $(\omega(\beta),k,0,0)$.
		
		In \eqref{Open WDVV For Gamma_a, Gamma_b Minimal 3b}, the higher order terms are of the form
		\begin{equation}
			\label{Open WDVV For Gamma_a, Gamma_b Minimal 3b, 1}\OGWbar_{\beta_1,k_1}^{\parentheses{2}}\big(\Gamma_a\otimes\Gamma_b\big)\OGWbar_{\beta_2,k_2+2}^{\parentheses{0}},
		\end{equation}
		for some $\beta_1,\beta_2\in\Pi_{\geq 0}$ that satisfy $\beta_1+\beta_2=\beta+\beta'$ and $k_1,k_2\geq 0$ that satisfy $k_1+k_2=k-2$. For one of the factors in \eqref{Open WDVV For Gamma_a, Gamma_b Minimal 3b, 1} to have order $(\omega(\beta),k,0,0)$ or higher, one of the following must be true:
		
		\begin{enumerate}
			\item\label{Open WDVV For Gamma_a, Gamma_b Minimal 3b, case a} {$\omega(\beta_1)>\omega(\beta)$:} In this case, we deduce that $\omega(\beta_2)<\omega(\beta)=\hbar$. If $\beta_2 = \beta_0$, note that $k_2+2\geq 2$, so the zero axiom \eqref{Open Zero} implies that $\OGWbar_{\beta_2,k_2+2}^{\parentheses{0}}$ vanishes. If $\beta_2\neq\beta_0$, the energy gap axiom \eqref{Energy Gap} implies that $\OGWbar_{\beta_2,k_2+2}^{\parentheses{0}}$ vanishes. In any case, \eqref{Open WDVV For Gamma_a, Gamma_b Minimal 3b, 1} reduces to zero.
			\item\label{Open WDVV For Gamma_a, Gamma_b Minimal 3b, case b} {$\omega(\beta_1)=\omega(\beta)$:} In this case  $k_1\leq k-2<k$, so $\OGWbar_{\beta_1,k_1}^{\parentheses{2}}\big(\Gamma_a\otimes\Gamma_b\big)$ has lower order than $(\omega(\beta),k,0,0)$. As for $\OGWbar_{\beta_2,k_2+2}^{\parentheses{0}}$, note that $\omega(\beta_2) = \omega(\beta')$ in this case.
			
			If $\omega(\beta)>\omega(\beta') = \omega(\beta_2)$, then $\OGWbar_{\beta_2,k_2+2}^{\parentheses{0}}$ has lower order than $(\omega(\beta),k,0,0)$.
			
			Otherwise, $\omega(\beta) = \omega(\beta') = \omega(\beta_2)$. In this case, for $\OGWbar_{\beta_2,k_2+2}^{\parentheses{0}}$ to have order $(\omega(\beta),k,0,0)$ or higher, we must have $k_2 +2 =k$, and therefore $k_1 = 0$. Let us separate into two sub-cases:
			
			\begin{enumerate}
				\item \label{nonzero multiple of OGW_beta,k , ell' = 1, 1} {$\beta_1 = \beta'$:} Then \eqref{Open WDVV For Gamma_a, Gamma_b Minimal 3b, 1} reduces to
				\[ \OGWbar_{\beta', 0}^{\parentheses{2}}\big(\Gamma_{a}\otimes\Gamma_{b}\big)\OGWbar_{\beta,k}^{\parentheses{0}}, \]
				which is a nonzero multiple of the term we are looking to compute.
				\item  {$\beta_1\neq\beta'$:} Then (since $\omega(\beta_1) = \omega(\beta')$ and since $\Pi = H_2(X,L;\ZZ)/(\ker\omega\oplus\ker\mu)$) we have $\mu(\beta_1)\neq \mu(\beta')$. By the degree axiom \eqref{Open Degree}, the fact that the value $\OGWbar_{\beta', 0}^{\parentheses{2}}\big(\Gamma_{a}\otimes\Gamma_{b}\big)$ does not vanish implies that
				\[ n-3 + \mu(\beta') + 4 = \left|\Gamma_{a}\right| +\left|\Gamma_b\right|, \]
				and therefore
				\[ n-3 + \mu(\beta_1) + 4 \neq \left|\Gamma_{a}\right| +\left|\Gamma_b\right|. \]
				
				We deduce by the degree axiom, again, that
				\[  \OGWbar_{\beta_1, 0}^{\parentheses{2}}\big(\Gamma_{a}\otimes\Gamma_{b}) = 0, \]
				and therefore \eqref{Open WDVV For Gamma_a, Gamma_b Minimal 3b, 1} vanishes in this case.
			\end{enumerate}
			
			We conclude that there are no non-vanishing terms with factors of higher order in \eqref{Open WDVV For Gamma_a, Gamma_b Minimal 3b, 1} in this case, except for a nonzero multiple of $\OGW_{\beta,k}^{\parentheses{0}}$.
			\item\label{Open WDVV For Gamma_a, Gamma_b Minimal 3b, case c} {$\omega(\beta_2)>\omega(\beta)$:} In this case, we deduce $\omega(\beta_1)<\omega(\beta)=\hbar$. If $\beta_1 = \beta_0$, the zero axiom \eqref{Open Zero} implies that $k_1 = 0$ and that
			\[ \OGWbar_{\beta_1,k_1}^{\parentheses{2}}\big(\Gamma_a\otimes\Gamma_b\big) = P_\RR(\Gamma_a\wedge\Gamma_b) = 0, \]
			since $P_\RR$ vanishes on $H^*(X,\RR)$. If $\beta_1\neq\beta_0$, the energy gap axiom \eqref{Energy Gap} implies that $\OGWbar_{\beta_1,k_1}^{\parentheses{2}}\big(\Gamma_a\otimes\Gamma_b\big)$ vanishes. In any case, \eqref{Open WDVV For Gamma_a, Gamma_b Minimal 3b, 1} reduces to zero.
			
			\item\label{Open WDVV For Gamma_a, Gamma_b Minimal 3b, case d} {$\omega(\beta_2)=\omega(\beta)$:} Note that $\omega(\beta_1) = \omega(\beta')$ in this case. Let us look into the order of $\OGWbar_{\beta_1,k_1}^{\parentheses{2}}\big(\Gamma_a\otimes\Gamma_b\big)$ first.
			
			If $\omega(\beta)>\omega(\beta')$, then $\OGWbar_{\beta_1,k_1}^{\parentheses{2}}\big(\Gamma_a\otimes\Gamma_b\big)$ has lower order than $(\omega(\beta),k,0,0)$.
			
			If $\omega(\beta) = \omega(\beta')$, then note that $k_1\leq k-2<k$, and therefore $\OGWbar_{\beta_1,k_1}^{\parentheses{2}}\big(\Gamma_a\otimes\Gamma_b\big)$ has lower order than $(\omega(\beta),k,0,0)$.
			
			As for $\OGWbar_{\beta_2,k_2+2}^{\parentheses{0}}$, for this term to have order $(\omega(\beta),k,0,0)$ or higher, we must have $k_2+2=k$, and thus $k_1 = 0$.
			
			Let us consider two different sub-cases:
			\begin{enumerate}
				\item  {$\beta_2 = \beta$:} Then \eqref{Open WDVV For Gamma_a, Gamma_b Minimal 3b, 1} reduces to
				\begin{equation*}
					\OGWbar_{\beta',0}^{\parentheses{2}}\big(\Gamma_{a}\otimes\Gamma_{b}\big)\OGWbar_{\beta,k}^{\parentheses{0}},
				\end{equation*}
				which is a nonzero multiple of $\OGWbar_{\beta,k}^{\parentheses{0}}$ by our assumption. Note that it is the same case that we considered in case \ref{nonzero multiple of OGW_beta,k , ell' = 1, 1}.
				\item  {$\beta_2 \neq \beta$:} Then $\beta_1\neq \beta'$ but $\omega(\beta_1) = \beta'$. Since $\Pi_{\geq 0} = H_2(X,L;\ZZ)/(\ker\omega\oplus\ker\mu)$, we deduce that $\mu(\beta_1)\neq \mu(\beta')$. By the degree axiom \eqref{Open Degree}, the fact that the value $\OGWbar_{\beta', 0}^{\parentheses{2}}\big(\Gamma_{a}\otimes\Gamma_{b}\big)$ does not vanish implies that
				\[ n-3 + \mu(\beta') + 4 = \left|\Gamma_{a}\right| + \left|\Gamma_{b}\right|, \]
				and therefore,
				\[ n-3 + \mu(\beta_1) + 4 \neq \left|\Gamma_{a}\right| + \left|\Gamma_{b}\right|. \]
				
				We deduce by the degree axiom, again, that
				\[  \OGWbar_{\beta_1, 0}^{\parentheses{2}}\big(\Gamma_{a}\otimes\Gamma_{b}\big) = 0. \]
			\end{enumerate}			
			
			We conclude that in this case the term \eqref{Open WDVV For Gamma_a, Gamma_b Minimal 3, 1} has no factors of order higher than $(\omega(\beta),k,0,0)$, and the only non-vanishing term of order $(\omega(\beta),k,0,0)$ is the one already considered in case \ref{nonzero multiple of OGW_beta,k , ell' = 1, 1}.
		\end{enumerate}

		In total, we see that \eqref{Open WDVV For Gamma_a, Gamma_b Minimal 3b, 1} has no terms with factors of order higher than $(\omega(\beta),k,0,0)$, and the only term with order $(\omega(\beta),k,0,0)$, is a nonzero multiple of $\OGWbar_{\beta,k}^{\parentheses{0}}$.
		
		We finally get the relation
		\begin{align}
			A\cdot\OGWbar_{\beta,k}^{\parentheses{0}} = -\sum_{\substack{\beta_1,\beta_2\in\Pi_{\geq 0}\\ \beta_1+\beta_2=\beta+\beta'}}\sum_{\substack{k_1,k_2\in\ZZ_{\geq 0}\\ k_1+k_2 = k-2}}\binom{k-2}{k_1}&\OGWbar_{\beta_1,k_1+1}^{\parentheses{1}}\big(\Gamma_a\big)\OGWbar_{\beta_2,k_2+1}^{\parentheses{1}}\big(\Gamma_b\big) \\
			\notag-\sum\limits_{0\leq \mu,\nu\leq \kappa}\sum\limits_{\substack{\beta_1\in H_2(X;\ZZ)\\ \beta_2\in\Pi_{\geq 0}\\ \varpi(\beta_1)+\beta_2=\beta+\beta'}}(-1)^{w_s(\beta_1)}&\GW_{\beta_1}\big(\Delta_a\otimes\Delta_b\otimes\Delta_\mu\big)g^{\mu\nu}\OGWbar_{\beta_2,k-1}^{\parentheses{1}}\big(\Gamma_\nu\big)\\
			\notag+\sum_{\substack{\beta_1,\beta_2\in\Pi_{\geq 0}\\ (\beta_2,k_2)\neq(\beta,k-2)\\ \beta_1+\beta_2=\beta+\beta'}}\sum_{\substack{k_1,k_2\in\ZZ_{\geq 0}\\ k_1+k_2=k-2}}\binom{k-2}{k_1}&\OGWbar_{\beta_1,k_1}^{\parentheses{2}}\big(\Gamma_a\otimes\Gamma_b\big)\OGWbar_{\beta_2,k_2+2}^{\parentheses{0}},
		\end{align}
		where $A = \OGWbar_{\beta',0}^{\parentheses{2}}\big(\Gamma_a\otimes\Gamma_b\big)\neq 0$, and the right-hand side only contains terms of lower order. Again, we note that the energy gap axiom \eqref{Energy Gap} implies that such terms of lower order have energy $\omega(\beta)$ or have energy lower than or equal to $\omega(\beta)-\hbar$.
		
		We conclude that the coefficient $\OGWbar_{\beta,k}^{\parentheses{0}}$ is computable via coefficients of $\Omega$ of a lower order, with energy $\omega(\beta)$ or energy lower than or equal to $\omega(\beta)-\hbar$, and closed Gromov-Witten invariants.
		
	\end{proof}

	Using the four reduction steps of Lemmas~\ref{All OGW are Computable lemma pt. 1}, \ref{All OGW are Computable lemma pt. 2}, \ref{All OGW are Computable lemma pt. 2, 2}, and \ref{All OGW are Computable lemma pt. 3}, we can now prove Theorems \ref{All OGW are computable 1 Fellowship of the OGW} and \ref{All OGW are computable 2 two OGW two computable}.
	\begin{proof}[Proof of Theorem \ref{All OGW are computable 1 Fellowship of the OGW}]
		Let $\beta\in\Pi_{\geq 0}$, $k,\ell\in\ZZ_{\geq 0}$, and $\gamma_1,\dots,\gamma_{{\ell}}\in H^*(X,L;\RR)$, and let us look at the coefficient $\OGWbar_{\beta,k}^{\parentheses{\ell}}\big(\gamma_1\otimes\dots\otimes\gamma_\ell\big)$. By Lemma \ref{All OGW are Computable lemma pt. 1}, we may assume without loss of generality that there exist $0\leq i_1,\dots,i_\ell\leq K$ such that $\gamma_1=\Gamma_{i_1},\dots,\gamma_\ell=\Gamma_{i_{\ell}}$, and that $\left|\Gamma_{i_1}\right|\geq\dots\geq\left|\Gamma_{i_\ell}\right|$.
		
		If $\ell\geq 2$, we deduce by Lemma \ref{All OGW are Computable lemma pt. 2} that $\OGWbar_{\beta,k}^{\parentheses{\ell}}\big(\gamma_1\otimes\dots\otimes\gamma_\ell\big)$ is computable via terms of lower order, with energy $\omega(\beta)$ or energy lower than or equal to $\omega(\beta)-\hbar$, and closed Gromov-Witten invariants.
		
		If $\ell= 1$ and $k\geq 1$, we deduce by Lemma \ref{All OGW are Computable lemma pt. 2, 2} that $\OGWbar_{\beta,k}^{\parentheses{\ell}}\big(\gamma_1\otimes\dots\otimes\gamma_\ell\big)$ is computable via terms of lower order, with energy $\omega(\beta)$ or energy lower than or equal to $\omega(\beta)-\hbar$, and closed Gromov-Witten invariants.
		
		In any other case, either $\ell=0$ and $k\in\ZZ_{\geq 0}$ or $\ell=1$ and $k=0$. The degree axiom \eqref{Open Degree} implies that we are left with precisely the set of initial open Gromov-Witten invariants that was stated in the theorem. As we have noted, at each step of the recursion we go down by at least $\hbar$ in the energy of the order, or by at least $1$ in the minimal degree of the inner constraints, the number of interior constrains, or number of boundary constraints of the order. Therefore, we reach the initial set of values in a finite number of steps. By \cite{KontsevitchManin1994}, any closed Gromov-Witten invariant can be computed in a finite number of steps from the set of initial values
		\begin{align*}
			\bigg\{\GW_{\beta}\left(\gamma_1\otimes\gamma_2\otimes\gamma_3\right)\, |\, &\beta\in H_2(X;\ZZ),\, \gamma_1,\gamma_2,\gamma_3\in H^*(X;\RR),\\
			&|\gamma_3|=2, 2n+2c_1(\beta)=|\gamma_1|+|\gamma_2|+|\gamma_3|\bigg\}.
		\end{align*}
		
		We see, therefore, that indeed any coefficient of $\Omega$ can be computed after a finite number of steps from the following sets of initial values:
		\begin{equation*}
			\left\{\OGWbar_{\beta,k}^{\parentheses{0}}\, \middle|\, \beta\in\Pi_{\geq 0},\, k\in\ZZ_{\geq 0},\, n-3+\mu(\beta)+k=kn\right\},
		\end{equation*}
		\begin{equation*}
			\left\{\OGWbar_{\beta,0}^{\parentheses{1}}\big(\Gamma_i\big)\, \middle|\, \beta\in\Pi_{\geq 0},\, 1\leq i\leq K,\, n-3+\mu(\beta)+2=|\Gamma_i|\right\},
		\end{equation*}
		\begin{align*}
			\bigg\{\GW_{\beta}\left(\gamma_1\otimes\gamma_2\otimes\gamma_3\right)\, |\, &\beta\in H_2(X;\ZZ),\, \gamma_1,\gamma_2,\gamma_3\in H^*(X;\RR),\\
			&\;|\gamma_3|=2, 2n+2c_1(\beta)=|\gamma_1|+|\gamma_2|+|\gamma_3|\bigg\}.
		\end{align*}
		Thus Theorem \ref{All OGW are computable 1 Fellowship of the OGW} is proven.
		
	\end{proof}
	
	\begin{proof}[Proof of Remark \ref{ell'=0 remark}]
		Assume there exists some $\beta'\in\Pi_{>0}$, with $\omega(\beta')=\hbar$, such that
		\[ \OGWbar_{\beta',0}^{\parentheses{0}} \neq 0. \]
		
		By the degree axiom \eqref{Open Degree}, we deduce that
		\[ n-3+\mu(\beta') = 0, \]
		and thus $\mu(\beta') = 3-n$, as stated.
	\end{proof}
	
	\begin{proof}[Proof of Theorem \ref{All OGW are computable 2 two OGW two computable}]
		Let $\beta\in\Pi_{\geq 0}$, $k,\ell\in\ZZ_{\geq 0}$, and $\gamma_1,\dots,\gamma_{{\ell}}\in H^*(X,L;\RR)$, and consider the coefficient $\OGWbar_{\beta,k}^{\parentheses{\ell}}\big(\gamma_1\otimes\dots\otimes\gamma_\ell\big)$. By Lemma \ref{All OGW are Computable lemma pt. 1}, we may assume without loss of generality that there exist some $0\leq i_1,\dots,i_\ell\leq K$ such that $\gamma_1=\Gamma_{i_1},\dots,\gamma_\ell=\Gamma_{i_{\ell}}$, and that $\left|\Gamma_{i_1}\right|\geq\dots\geq\left|\Gamma_{i_\ell}\right|$.
		
		If $\ell\geq 2$, we deduce by Lemma \ref{All OGW are Computable lemma pt. 2} that $\OGWbar_{\beta,k}^{\parentheses{\ell}}\big(\gamma_1\otimes\dots\otimes\gamma_\ell\big)$ is computable via terms of lower order, with energy $\omega(\beta)$ or energy lower than or equal to $\omega(\beta)-\hbar$, and closed Gromov-Witten invariants.
		
		If $\ell= 1$ and $k\geq 1$, we deduce by Lemma \ref{All OGW are Computable lemma pt. 2, 2} that $\OGWbar_{\beta,k}^{\parentheses{\ell}}\big(\gamma_1\otimes\dots\otimes\gamma_\ell\big)$ is computable via terms of lower order, with energy $\omega(\beta)$ or energy lower than or equal to $\omega(\beta)-\hbar$, and closed Gromov-Witten invariants.
		
		If $\ell=0$ and $k\geq 2$, by the assumption of the theorem there exists some $\beta'\in\Pi_{>0}$, some $\ell'\in\ZZ_{\geq 0}$, and some $0\leq i_1,...,i_{\ell'}\leq K$ for which $\OGWbar_{\beta',0}\left(\Gamma_{i_1}\otimes\dots\otimes\Gamma_{i_{\ell'}}\right)\neq 0$. By Lemma \ref{All OGW are Computable lemma pt. 3}, the coefficient $\OGWbar_{\beta,k}\left(\gamma_1\otimes\dots\otimes\gamma_\ell\right)$ is computable via terms of lower order, with energy $\omega(\beta)$ or energy lower than or equal to $\omega(\beta)-\hbar$, and closed Gromov-Witten invariants.
		
		In any other case, either $\ell=0$ and $k\in\{0,1\}$ or $\ell=1$ and $k=0$. The degree axiom \eqref{Open Degree} implies that we are left with precisely the set of initial open Gromov-Witten invariants that was stated in the theorem. As we have noted, at each step of the recursion we go down by at least $\hbar$ in the energy of the order, or by at least $1$ in the minimal degree of the inner constraints, the number of interior constrains, or number of boundary constraints of the order. Therefore, we reach the initial set of values in a finite number of steps. By \cite{KontsevitchManin1994}, any closed Gromov-Witten invariant may be computed in a finite number of steps from the set of initial values
		\begin{align*}
			\bigg\{\GW_{\beta}\left(\gamma_1\otimes\gamma_2\otimes\gamma_3\right)\, |\,& \beta\in H_2(X;\ZZ),\, \gamma_1,\gamma_2,\gamma_3\in H^*(X;\RR),\\
			&\;|\gamma_3|=2, 2n+2c_1(\beta)=|\gamma_1|+|\gamma_2|+|\gamma_3|\bigg\}.
		\end{align*}
		We see, therefore, that indeed any coefficient of $\Omega$ can be computed after a finite number of steps from the following sets of initial values:
		\begin{equation*}
			\left\{\OGWbar_{\beta,k}^{\parentheses{0}}\, \middle|\, \beta\in\Pi_{\geq 0},\, k\in\{0,1\},\, n-3+\mu(\beta)+k=kn\right\},
		\end{equation*}
		\begin{equation*}
			\left\{\OGWbar_{\beta,0}^{\parentheses{1}}\big(\Gamma_i\big)\, \middle|\, \beta\in\Pi_{\geq 0},\, 1\leq i\leq K,\, n-3+\mu(\beta)+2=|\Gamma_i|\right\},
		\end{equation*}
		\begin{align*}
			\bigg\{\GW_{\beta}\left(\gamma_1\otimes\gamma_2\otimes\gamma_3\right)\, |\, &\beta\in H_2(X;\ZZ),\, \gamma_1,\gamma_2,\gamma_3\in H^*(X;\RR),\\
			&\;|\gamma_3|=2, 2n+2c_1(\beta)=|\gamma_1|+|\gamma_2|+|\gamma_3|\bigg\}.
		\end{align*}
		
		Thus Theorem \ref{All OGW are computable 2 two OGW two computable} is proven.
	\end{proof}

	\section{Products of projective spaces}\label{products of projective spaces}

	Let $n_1,n_2$ be odd natural numbers with $n_1,n_2>1$. Take
	\[
	(X,L) := (\CC P^{n_1}\times\CC P^{n_2}, \RR P^{n_1}\times\RR P^{n_2}),
\quad n:=n_1+n_2.
	\]
	Equip $X$ with the symplectic form $\omega = \omega_1\oplus\omega_2$, where $\omega_i = \omega_{FS,n_i}$ is the Fubini-Study form on $\CC P^{n_i}$ for $i\in\{1,2\}$. Equip $X$ with the complex structure $J = J_{n_1}\oplus J_{n_2}$, where $J_{n_i}$ is the standard complex structure on $\CC P^{n_i}$ for $i\in\{1,2\}$. Then $L\subset X$ is a Lagrangian submanifold. Equip $L$ with the product relative spin structure.
	Identify the second relative homology $H_2(\CC P^{n_i},\RR P^{n_i};\ZZ)$ and the second absolute homology $H_2(\CC P^{n_i};\ZZ)$ with $\ZZ$ so that curves of positive energy are identified with $\ZZ_{>0}$. So, $\Pi = H_2(X,L;\ZZ)$ and $H_2(X;\ZZ)$ are identified with $\ZZ\oplus\ZZ$ accordingly.
	The map $\varpi:H_2(X;\ZZ)\rightarrow\Pi$ is given, under this identification, via multiplication by $2$ in each coordinate.
	The first Chern class is given by
	\[
	c_1(\tilde{a},\tilde{b}) = (n_1+1)\tilde{a}+(n_2+1)\tilde{b},
	\quad (\tilde{a},\tilde{b})\in H_2(X;\ZZ),
	\]
	and the Maslov index is given by
	\[
	\mu(a,b) = \left(n_1+1\right)a + \left(n_2 + 1\right)b,
	\quad (a,b)\in H_2(X,L;\ZZ).
	\]
	For $i\in\{1,2\}$, the cohomology of $\CC P^{n_i}$ is generated by the class of $\omega_i$. By the K\"{u}nneth formula, the cohomology of $X$ is generated, as a ring, by $H^2(X;\RR) = \left\{\pi_1^*[\omega_1], \pi_2^*[\omega_2]\right\}$, where $\pi_i:X\rightarrow \CC P^{n_i}$ is the projection to $\CC P^{n_i}$, for $i\in \{1,2\}$. Then
	\[
	\left\{\Delta_{i}\right\}_{i=1}^{(n_1+1)(n_2+1)} = \left\{\left(\pi_1^*[\omega_1]\right)^i\wedge\left(\pi_2^*[\omega_2]\right)^j\right\}_{0\leq i\leq n_1, 0\leq j\leq n_2}
	\]
	is a basis for $H^*(X;\RR)$, and
	\[ \left\{\Gamma_{i}\right\}_{i=1}^{(n_1+1)(n_2+1)}\cup\{\Gamma_\diamond\} = \left\{\left(\pi_1^*[\omega_1]\right)^i\wedge\left(\pi_2^*[\omega_2]\right)^j\right\}_{0\leq i\leq n_1, 0\leq j\leq n_2}\cup\{\Gamma_\diamond\} \]
	is a basis for $H^*(X,L;\RR)$. Let $K=(n_1+1)(n_2+1)$ and $N = K+1$, as in Section \ref{main theorems}.

	As mentioned in Section~\ref{subsection:Geometric Realization}, any definition of open Gromov-Witten invariants on $(X,L)$ is expected to arise from a Frobenius superpotential
	\begin{equation}\label{Omega in section 6}
		\Omega = \sum\limits_{\substack{\beta\in\Pi_{\geq 0}\\ k,\ell\in\ZZ_{\geq 0}}}\sum\limits_{\substack{\invecZN{r}\\ \sumsupto{\ell}{\vec{r}}}}
		T^\beta\frac{s^k}{k!}\prod\limits_{i=0}^{N}\frac{t_{N-i}^{r_{N-i}}}{(r_{N-i})!}
		\OGWbar^{(\ell)}_{\beta,k}\big(\bigotimes\limits_{i=0}^{N}\Gamma_{N-i}^{\otimes r_{N-i}}\big).
	\end{equation}
	Since $n$ is even, it follows from the degree axiom~\eqref{Open Degree} that $\OGW_{\beta,k}(\gamma_1\otimes\dots\otimes\gamma_\ell)=0$ unless $k$ is odd. In the following, we use Theorem~\ref{All OGW are computable 1 Fellowship of the OGW} to show a stronger vanishing phenomenon.
	
	From now on, assume there exists a Frobenius superpotential $\Omega$ and write it in the form of~\eqref{Omega in section 6}.

	\begin{Proposition}\label{OGWs Vanish for n1 n2 are 5mod6}
		If $n_1,n_2\equiv 5\pmod 6$, then $\OGW_{\beta,k}^{(\ell)}(\gamma_1\otimes\dots\otimes\gamma_\ell)=0$ for all $k \in \ZZ_{\ge 0}$, $\gamma_j \in H^*(X,L;\RR)$, and $\beta \ne \beta_0$.
	\end{Proposition}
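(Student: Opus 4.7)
The plan is to invoke Theorem~\ref{All OGW are computable 1 Fellowship of the OGW} and show that every piece of initial data it requires is incompatible with the hypothesis $n_1, n_2 \equiv 5 \pmod 6$. The key arithmetic fact is that $n_1+1, n_2+1 \equiv 0 \pmod 6$ forces $\mu(\beta) = (n_1+1)a + (n_2+1)b \equiv 0 \pmod 6$ for every $\beta = (a,b) \in \Pi$; in particular $\mu$ is always even. Note also that $n = n_1+n_2 \equiv 4 \pmod 6$, so $n-1 \equiv 3 \pmod 6$.

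First I check that both sets of initial OGW values in Theorem~\ref{All OGW are computable 1 Fellowship of the OGW} vanish for $\beta \ne \beta_0$. For Set~1 ($\ell = 0$), the degree axiom~\eqref{Open Degree} demands $\mu(\beta) = (k-1)(n-1) + 2 \equiv 3(k-1) + 2 \pmod 6$, which lies in $\{2,5\}$ and is never $0$. For Set~2 ($\ell = 1$, $k = 0$), the condition is $\mu(\beta) = |\Gamma_i| - n + 1$: for an absolute basis element $\Gamma_i$ this RHS is odd, contradicting that $\mu$ is even; for $\Gamma_i = \Gamma_\diamond$ it equals $2$, which is not a multiple of $6$. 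Here I use that $[L]=0$ in $H_*(X;\RR)$ when both $n_i$ are odd, so $|\Gamma_\diamond| = n+1$.

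It remains to execute the inductive reduction from Theorem~\ref{All OGW are computable 1 Fellowship of the OGW}. By Lemma~\ref{All OGW are Computable lemma pt. 1}, I may assume the interior constraints are in the absolute basis. I induct on the partial order $<$ from Section~\ref{Results}. For $\ell \ge 2$, Lemma~\ref{All OGW are Computable lemma pt. 2} applies: every term on the RHS of summary~\eqref{summary of all OGW are computable lemma pt. 2} contains an open factor with degree in $\Pi_{>0}$, hence $\ne \beta_0$, and these vanish by induction. For $\ell = 1$, $k = 0$ the invariant is an initial value. For $\ell = 1$, $k \ge 1$, summary~\eqref{summary of all OGW are computable lemma pt. 2, 2} of Lemma~\ref{All OGW are Computable lemma pt. 2, 2} applies; the ``open--open'' sums again vanish by induction.

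The principal obstacle is the middle ``$\GW \cdot \OGW$'' sum in~\eqref{summary of all OGW are computable lemma pt. 2, 2}, which admits the possibility $\beta_2 = \beta_0$. The zero and fundamental class axioms then force $k=1$, $|S_2| = 0$, and $\Gamma_\nu = 1$, producing $\OGW_{\beta_0, 1}^{\parentheses{1}}(1) = -1$. Using that $g^{\mu,0}$ is nonzero only when $\Delta_\mu$ is the top class $\Delta_K$, the residual term becomes a scalar multiple of $\GW_{\beta_1}(\Delta_a \otimes \Delta_b \otimes \Delta_K)$ with $\varpi(\beta_1) = \beta$. To resolve the obstacle, I invoke the closed divisor axiom on $\Delta_b$ followed by Behrend's product formula~\cite{behrend1997product} for $\CC P^{n_1} \times \CC P^{n_2}$: the factor $\GW_{\beta_1}(\Delta_a \otimes \Delta_K)$ can be nonzero only when $\beta_1 \in \{(1,0),(0,1)\}$ and $\Delta_a$ equals the top class $(\pi_i^*\omega_i)^{n_i}$ of the active factor, a consequence of the degree axiom in $\CC P^{n_i}$ which forces $i_a = (n_i+1)d - 1$ and hence $d=1$. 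In either of these exceptional cases, $\Delta_b$ is a divisor that either lies in the same factor as $\Delta_a$ (whence $\Delta_a \wedge \Delta_b = 0$) or in the other factor (whence $\int_{\beta_1} \Delta_b = 0$), so the contribution vanishes, and the induction closes.
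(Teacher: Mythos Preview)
Your proposal is correct and follows essentially the same route as the paper: kill the open initial values of Theorem~\ref{All OGW are computable 1 Fellowship of the OGW} by the mod-$6$ (resp.\ parity) obstruction in the degree axiom, then run the induction through Lemmas~\ref{All OGW are Computable lemma pt. 2} and~\ref{All OGW are Computable lemma pt. 2, 2}, with the only delicate point being the $\beta_2=\beta_0$ contribution in the $\ell=1$ reduction, which both you and the paper dispatch via Behrend's product formula.

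One small imprecision is worth flagging. You apply the closed divisor axiom \emph{first} and then invoke the product formula on the resulting two-point invariant $\GW_{\beta_1}(\Delta_a\otimes\Delta_K)$; but Theorem~\ref{behrend products} is phrased through the classes $I_{\beta,\ell}$ on $\overline{\mathcal{M}}_{0,\ell}$, which requires $\ell\ge 3$. The paper sidesteps this by applying the product formula directly to the three-point invariant $\GW_{\tilde\beta}(\Delta_a\otimes\Delta_b\otimes\Delta_\mu)$: the degree-$0$ factor $I_0^{\CC P^{n_2}}(\gamma_1\otimes\gamma_2\otimes\omega_2^{n_2})$ forces $\gamma_1=\gamma_2=1$, so $\Delta_a,\Delta_b$ both come from the active $\CC P^{n_i}$, giving $|\Gamma_{i}|\le 2n_i$ and hence $2(n_i+1)\tilde a\le 2n_i$, a contradiction. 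Your endgame (the dichotomy $\Delta_a\wedge\Delta_b=0$ versus $\int_{\beta_1}\Delta_b=0$) reaches the same conclusion by a slightly different bookkeeping and becomes rigorous simply by swapping the order of divisor and product formula. The $\Gamma_\diamond$ discussion in your treatment of Set~2 is harmless but unnecessary, since Theorem~\ref{All OGW are computable 1 Fellowship of the OGW} only lists $\Gamma_i$ with $0\le i\le K$ there.
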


	\begin{Proposition}\label{OGWs Vanish for n1 n2 are 1mod2 and pointlike}
		Assume $\Omega$ is defined by means of a point-like bounding pair, as in~\cite{SolomonTukachinsky2019relative}, and assume $\Omega$ satisfies the deformation invariance axiom~\ref{Deformation invariance}.
		If $n_1,n_2 \equiv 1\pmod 2$ and $n_1,n_2>1$, then $\OGW_{\beta,k}^{(\ell)}(\gamma_1\otimes\dots\otimes\gamma_\ell)=0$ for all $k \in \ZZ_{\ge 0}$, $\gamma_j \in H^*(X,L;\RR)$, and $\beta \ne \beta_0$.
	\end{Proposition}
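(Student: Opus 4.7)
The plan is to invoke Theorem~\ref{All OGW are computable 1 Fellowship of the OGW} and argue by induction on the order defined in Section~\ref{Results}. Since $\OGW^{(\ell)}_{\beta_0,k}$ vanishes except for the two cases in the zero axiom~\eqref{Open Zero}, and since the recursion formulas~\eqref{summary of all OGW are computable lemma pt. 2} and~\eqref{summary of all OGW are computable lemma pt. 2, 2} express each $\OGW^{(\ell)}_{\beta,k}$ with $\beta\in\Pi_{>0}$ as a combination of products of strictly lower-order OGW and closed GW invariants, it suffices to prove vanishing for the initial values appearing in Theorem~\ref{All OGW are computable 1 Fellowship of the OGW}: namely $\OGW^{(0)}_{\beta,k}$ with $n-3+\mu(\beta)+k=kn$, and $\OGW^{(1)}_{\beta,0}(\Gamma_i)$ with $n-3+\mu(\beta)+2=|\Gamma_i|$, for $\beta\in\Pi_{>0}$.

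Next, I would use the hypothesis that $\Omega$ comes from a point-like bounding pair~\cite{SolomonTukachinsky2019relative}. In that construction, the initial values above are expressible as integrals over moduli spaces of $J$-holomorphic disks with boundary on $L$, in which every boundary marked point is constrained to coincide with a fixed point $p=(p_1,p_2)\in\RR P^{n_1}\times\RR P^{n_2}$. Composition with the projection $\pi_i:X\to\CC P^{n_i}$ exhibits any such disk as a pair of $J_{n_i}$-holomorphic disks in $\CC P^{n_i}$ with boundary on $\RR P^{n_i}$, sharing a common domain parameterization. This factorization should identify the relevant integrals with integrals over product moduli spaces in the two factors, each with boundary marked points pinned to $p_i$ and each of known virtual dimension from~\cite{SolomonTukachinsky2019relative}.

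The main obstacle, and the technical core, is to turn this factorization into actual vanishing of the counts. The deformation invariance axiom~\ref{Deformation invariance} enters here: along any symplectic deformation of $\omega$ for which $L$ stays Lagrangian and $S_L$ stays in the kernel, the initial OGW values remain constant, giving extra flexibility in choosing an auxiliary almost complex structure adapted to the product structure. For $\beta=(a,b)$ with $a,b>0$, a dimension count using that $n_1,n_2$ are odd and greater than one is expected to produce a parity mismatch between the virtual dimensions of the factor moduli spaces and the ambient one, ruling out nontrivial contributions. The boundary cases $a=0$ or $b=0$ reduce, after iteratively applying the divisor axiom~\eqref{Open Divisor} to strip off any insertion $\Gamma_i$ pulled back from the nontrivial factor, either to constant disks (handled by~\eqref{Open Zero}) or to OGW invariants of a single projective space paired against contributions from the trivial factor that must vanish by dimension. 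Making this bookkeeping precise, and verifying that the point-like bounding pair decomposes compatibly across the product so that the product-disk moduli genuinely governs the integral, is the most delicate step; I expect this to be where most of the work lies, with the deformation invariance providing the crucial freedom to move to a geometric setting in which the product decomposition is valid.
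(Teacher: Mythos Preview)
Your inductive plan is the right outline, but there is a genuine gap in the induction step. In the recursion~\eqref{summary of all OGW are computable lemma pt. 2, 2} for $\ell=1$, $k\ge 1$, the middle sum ranges over $\beta_2\in\Pi_{\ge 0}$, not $\Pi_{>0}$. When $\beta_2=\beta_0$ the OGW factor is $\OGW^{(1)}_{\beta_0,k}(\Gamma_\nu)$, and for $k=1$, $\Gamma_\nu=1$ this equals $-1$ by the zero axiom. So a nonzero $\beta_0$ term does occur, multiplied by a closed invariant $\GW_{\tilde\beta}(\Delta_a\otimes\Delta_b\otimes\Delta_\mu)g^{\mu\nu}$ with $\varpi(\tilde\beta)=\beta\ne\beta_0$. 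Your claim that ``it suffices to prove vanishing for the initial values'' therefore needs an additional argument: one must show this closed three-point invariant vanishes. The paper does this via the product formula for closed Gromov--Witten classes on $\CC P^{n_1}\times\CC P^{n_2}$ together with a degree count that forces $|\Delta_i|\le 2n_1$ while $2c_1(\tilde\beta)=|\Delta_i|$ with $\tilde\beta\ne 0$, a contradiction. Without this step the induction does not close.

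For the initial values themselves, your approach and the paper's diverge sharply. You propose factorizing disk moduli spaces along the product and running a dimension count in each factor; as you acknowledge, making this precise for a point-like bounding pair is delicate, and you have not carried it out. The paper's argument is much shorter: since $n=n_1+n_2$ is even, the degree axiom kills $\OGW^{(1)}_{\beta,0}(\Gamma_i)$ outright; for $\OGW^{(0)}_{\beta,k}$, the point-like hypothesis is used only through \cite[Remark~1.9]{SolomonTukachinsky2019relative}, which gives vanishing for $k\ge 2$ in even dimension. Deformation invariance is used just to conclude $\beta\in\ZZ_{\ge 0}\oplus\ZZ_{\ge 0}$ (by scaling $\omega_2$), and then the degree identity $n_1+n_2-3+(n_1+1)a+(n_2+1)b=k(n_1+n_2-1)$ with $k\le 1$ and $a+b\ge 1$ is impossible for $n_1,n_2>1$. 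No moduli-space factorization is needed.
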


	To prove both propositions,
	we follow the computational steps in the proof of Theorem \ref{All OGW are computable 1 Fellowship of the OGW} and show inductively that all coefficients of $\Omega$ of the form $\OGWbar_{\beta, k}^{(\ell)}\left(\gamma_{1}\otimes\dots\otimes\gamma_{\ell}\right)$ with $\beta\neq\beta_0$ vanish. This will be done in three steps -- first, for the basis of the induction, we show that all open initial values prescribed by Theorem \ref{All OGW are computable 1 Fellowship of the OGW} vanish. Next, we show that for $\ell\geq 2$, if all invariants of nonzero degree and lower order vanish, then $\OGWbar_{\beta, k}^{(\ell)}\left(\gamma_{1}\otimes\dots\otimes\gamma_{\ell}\right)$ vanishes. Lastly, we show that for $\ell=1$ and $k\geq 1$, if all invariants of nonzero degree and lower order vanish, then $\OGWbar_{\beta, k}^{(\ell)}\left(\gamma_{1}\otimes\dots\otimes\gamma_{\ell}\right)$ vanishes.
	
	We begin with an auxiliary observation.
	\begin{Lemma}\label{Deformation invariance implies degrees positive}
		Assume $\Omega$ satisfies the deformation invariance axiom~\ref{Deformation invariance}. If $\OGWbar_{\beta,k}^{\parentheses{\ell}}\big(\gamma_1\otimes\dots\otimes\gamma_\ell\big)\ne 0$ then $\beta\in\ZZ_{\geq 0}\oplus\ZZ_{\geq 0}$.
	\end{Lemma}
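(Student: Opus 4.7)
The plan is a deformation argument. Write $\beta = (a,b) \in \Pi = \ZZ\oplus\ZZ$; if $\beta = \beta_0$ the conclusion is trivial, so assume $\beta\neq\beta_0$, in which case the energy gap axiom~\eqref{Energy Gap} forces $\omega(\beta)>0$. Suppose for contradiction that $a<0$; the case $b<0$ will be handled symmetrically at the end. Then $b>0$. The idea is to exhibit a one-parameter family of symplectic forms on $X$ that keeps $L$ Lagrangian and $S_L$ in the kernel, while driving $\omega(\beta)$ down to zero, so that the energy gap axiom applies.

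Concretely, I would take $\omega_\lambda := \lambda\,\pi_1^*\omega_1 + \pi_2^*\omega_2$ for $\lambda\geq 1$. Each $\omega_\lambda$ is a product symplectic form; the Lagrangian $L = \RR P^{n_1}\times\RR P^{n_2}$ remains Lagrangian throughout, since scaling each factor's form by a positive constant preserves the Lagrangian condition for product Lagrangians; and $S_L = 0$ trivially lies in $\ker\omega_\lambda$. Let $c_i>0$ denote the symplectic area of a generator of $H_2(\CC P^{n_i}, \RR P^{n_i};\ZZ)$ with respect to $\omega_i$. Then $\omega_\lambda(\beta) = \lambda c_1 a + c_2 b$, which vanishes at $\lambda_0 := -c_2 b/(c_1 a)$ and is nonnegative on $[1,\lambda_0]$; the inequality $\lambda_0>1$ follows from $\omega(\beta) = c_1 a + c_2 b > 0$ together with $c_1 a<0$. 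In particular $\beta\in\Pi_{\geq 0}$ with respect to each $\omega_\lambda$ along the path.

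The deformation invariance axiom~\ref{Deformation invariance} then gives
\[
\OGWbar_{\beta,k}^{(\ell)}(\gamma_1\otimes\cdots\otimes\gamma_\ell)\big|_{\omega} \;=\; \OGWbar_{\beta,k}^{(\ell)}(\gamma_1\otimes\cdots\otimes\gamma_\ell)\big|_{\omega_{\lambda_0}}.
\]
Applying Proposition~\ref{hbar Proposition} to $(X,\omega_{\lambda_0})$ produces some $\hbar_{\lambda_0}>0$; combined with $\omega_{\lambda_0}(\beta) = 0 < \hbar_{\lambda_0}$ and $\beta\neq\beta_0$, the energy gap axiom forces the right-hand side to vanish, contradicting the nonvanishing hypothesis. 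Hence $a\geq 0$, and the symmetric argument, using the deformation $\pi_1^*\omega_1 + \lambda\,\pi_2^*\omega_2$, yields $b\geq 0$.

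The only real subtlety is confirming that the deformation invariance axiom genuinely applies along the entire path $\lambda\in[1,\lambda_0]$. That reduces to checking symplecticity of each $\omega_\lambda$, the Lagrangian condition on $L$, and $S_L\subseteq\ker\omega_\lambda$; all three are immediate from the product structure of $(X, L)$ and the choice $S_L = 0$, so no serious obstacle arises.
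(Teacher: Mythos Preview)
Your proof is correct and follows essentially the same approach as the paper: rescale one factor of the product symplectic form to drive $\omega(\beta)$ down, then invoke the energy gap axiom together with deformation invariance. The paper's version is marginally shorter---it simply takes $t\gg 0$ so that $\omega^t(\beta)<0$ rather than stopping exactly at the vanishing point $\lambda_0$---but your choice to stop at $\omega_{\lambda_0}(\beta)=0$ is arguably cleaner, since it keeps $\beta\in\Pi_{\geq 0}$ throughout and so the energy gap axiom~\eqref{Energy Gap} applies on the nose.
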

	
	\begin{proof}
		Let $\beta=(a,b)\in H_2(X,L;\ZZ)$, and assume without loss of generality that $b<0$. For any $t>0$, let $\omega^{t} := \omega_{1}\oplus t\cdot\omega_{2}$. Note that $L$ remains Lagrangian with respect to $\omega^t$. By the deformation invariance axiom~\ref{Deformation invariance}, the coefficient $\OGWbar_{\beta,k}^{\parentheses{\ell}}\big(\gamma_1\otimes\dots\otimes\gamma_\ell\big)$ remains constant in $t$. But there exists $t>>0$ such that $\omega^t(a,b)<0$, so by the energy gap axiom \eqref{Energy Gap} we get that $\OGWbar_{\beta,k}^{\parentheses{\ell}}\big(\gamma_1\otimes\dots\otimes\gamma_\ell\big)=0$.
	\end{proof}

	We move to the basis of induction.

	\begin{Lemma}\label{Initial values vanish for n1 n2 5mod6}
		In the setup of Proposition~\ref{OGWs Vanish for n1 n2 are 5mod6},
		let $\OGW_{\beta,k}^{\parentheses{0}}$ or $\OGW_{\beta,0}^{\parentheses{1}}\left(\Gamma_{i}\right)$ be an initial value in one of the sets prescribed by Theorem~\ref{All OGW are computable 1 Fellowship of the OGW}. Then  this value vanishes.
	\end{Lemma}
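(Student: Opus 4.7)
The plan is a congruence-and-parity calculation based on two observations about $X = \CC P^{n_1} \times \CC P^{n_2}$ with $n_1, n_2 \equiv 5 \pmod 6$. First, since $n_1 + 1$ and $n_2 + 1$ are both divisible by $6$, the Maslov index $\mu(a,b) = (n_1+1)a + (n_2+1)b$ lies in $6\ZZ$ for every $\beta \in \Pi$. Second, $n := n_1 + n_2 \equiv 10 \equiv 4 \pmod 6$, so in particular $n$ is even. Recall also that every basis element $\Gamma_i$ with $0 \le i \le K$ has even cohomological degree, since $H^*(X;\RR)$ is generated by $H^2(X;\RR)$.

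For the first family of initial values, $\OGWbar_{\beta, k}^{\parentheses{0}}$, the degree condition $n - 3 + \mu(\beta) + k = kn$ rearranges to $\mu(\beta) = n(k-1) + 3 - k$. Reducing mod $6$ using $n \equiv 4$ gives $\mu(\beta) \equiv 3k - 1 \pmod 6$; this is $\equiv 5$ when $k$ is even and $\equiv 2$ when $k$ is odd, and in particular never $\equiv 0 \pmod 6$. Combined with $\mu(\beta) \in 6\ZZ$, this rules out every $(\beta, k)$. Hence no element of this family meets the degree condition, and I would conclude by the degree axiom~\eqref{Open Degree} that each such initial value vanishes.

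For the second family, $\OGWbar_{\beta, 0}^{\parentheses{1}}(\Gamma_i)$ with $0 \le i \le K$, the degree condition rearranges to $\mu(\beta) = |\Gamma_i| - n + 1$. The right-hand side is odd, since $|\Gamma_i|$ and $n$ are both even, while the left-hand side is even (indeed in $6\ZZ$). This is again an immediate contradiction, and these initial values vanish by the degree axiom~\eqref{Open Degree}. The whole argument is elementary arithmetic — there is no genuine obstacle at this stage, and no WDVV recursion is invoked; the substantive work is deferred to the inductive reduction steps carried out in the subsequent parts of the proof of Proposition~\ref{OGWs Vanish for n1 n2 are 5mod6}.
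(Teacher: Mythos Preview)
Your proof is correct and follows essentially the same approach as the paper: both arguments dispose of the two families of initial values by showing the degree axiom~\eqref{Open Degree} cannot be satisfied, via a congruence computation exploiting $n_1,n_2\equiv 5\pmod 6$. The only cosmetic difference is that the paper reduces the equation for $\OGWbar_{\beta,k}^{(0)}$ modulo $3$ (observing the right-hand side is divisible by $3$ while the left is not), whereas you reduce modulo $6$; for $\OGWbar_{\beta,0}^{(1)}(\Gamma_i)$ both proofs invoke the same parity obstruction.
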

	
	\begin{proof}
		As noted above, the dimension being even implies that $\OGW_{\beta,0}^{\parentheses{1}}\left(\Gamma_{i}\right)=0$ by the degree axiom.
		
		We wish to show that all invariants in the set
		\begin{equation}
			\label{OGW_beta,k I}\big\{\OGWbar_{(a,b),k}^{\parentheses{0}}\, \big|\, a,b\in\ZZ_{\geq 0},\,  n-3+\mu(a,b)+k=kn\big\}
		\end{equation}
		vanish.
		
		Let $\OGWbar_{(a,b),k}^{\parentheses{0}}$ be an invariant in the  set \eqref{OGW_beta,k I}, and assume by contradiction that it does not vanish. Then it satisfies the degree axiom \eqref{Open Degree}:
		\[ n-3+\mu(a,b)+k=kn. \]
		Substituting $n = n_1 + n_2$ and $\mu(a,b) = \left(n_1+1\right)a + \left(n_2+1\right)b$, we get
		\begin{equation}\label{Degree for OGW_{(a,b),k} I}
			n_1 + n_2 - 3 + \left(n_1+1\right)a + \left(n_2+1\right)b=k\left(n_1+n_2-1\right).
		\end{equation}
		Since $n_1,n_2\equiv 5\pmod6$, the right-hand side of~\eqref{Degree for OGW_{(a,b),k} I} is divisible by $3$, while the left-hand side is not.
		We deduce that all invariants in the  set \eqref{OGW_beta,k I} vanish.
	\end{proof}

	\begin{Lemma}\label{Initial values Vanish OMG}
		In the setup of Proposition~\ref{OGWs Vanish for n1 n2 are 1mod2 and pointlike},	
		let $\OGW_{\beta,k}^{\parentheses{0}}$ or $\OGW_{\beta,0}^{\parentheses{1}}\left(\Gamma_{i}\right)$ be an initial value in one of the sets prescribed by Theorem~\ref{All OGW are computable 1 Fellowship of the OGW}. Then  this value vanishes.
	\end{Lemma}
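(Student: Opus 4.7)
The plan is to enumerate the initial values prescribed by Theorem~\ref{All OGW are computable 1 Fellowship of the OGW} and show each one vanishes for $\beta \neq \beta_0$; the case $\beta = \beta_0$ is immediate from the zero axiom~\eqref{Open Zero}. The key numerical features are that $n = n_1 + n_2$ is even, the Maslov index $\mu(a, b) = (n_1+1)a + (n_2+1)b$ is always even, $n_j + 1 \geq 4$, and by Lemma~\ref{Deformation invariance implies degrees positive} any nonvanishing $\OGW$ has $\beta = (a, b) \in (\ZZ_{\geq 0})^2$.

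For initial values of the form $\OGW_{\beta, 0}^{(1)}(\Gamma_i)$ with $0 \leq i \leq K$, each $\Gamma_i$ is a K\"unneth product $(\pi_1^*[\omega_1])^p \wedge (\pi_2^*[\omega_2])^q$ and so has even cohomological degree. The degree axiom~\eqref{Open Degree} then reads $|\Gamma_i| = \mu(\beta) + n - 1$, whose right-hand side is odd, giving a parity contradiction. Note that $\Gamma_\diamond$ is indexed by $i = K+1 > K$ and is excluded from the initial-value set, so no further $\ell = 1$ case arises.

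For initial values of the form $\OGW_{\beta, k}^{(0)}$, the degree axiom gives $\mu(\beta) = (k-1)(n-1) + 2$. Since $\mu(\beta)$ is even while $n - 1$ is odd, $k$ must be odd. The cases $k = 0$ and $k = 1$ require $\mu(\beta) = 3 - n < 0$ or $\mu(\beta) = 2$, both impossible for $\beta \neq \beta_0$ as $\mu(\beta) \geq n_j + 1 \geq 4$. Thus it remains to treat $k \geq 3$ odd.

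For $k \geq 3$ odd I invoke the point-like geometric interpretation: $\OGW_{\beta, k}^{(0)}$ is a signed count of $J$-holomorphic disks of degree $\beta = (a, b)$ in $(X, L)$ whose boundary passes through $k$ generic points $(q_i, r_i) \in L$. The product structure of $(X, L)$ decomposes $u$ as $(u_1, u_2)$ with $u_j : (D, \partial D) \to (\CC P^{n_j}, \RR P^{n_j})$ of degrees $a$, $b$ passing through $q_i$, $r_i$ respectively, so the relevant moduli becomes a fiber product $\mathcal{M}_{a, k}^{\text{pt}}(q_\bullet) \times_{\mathcal{M}_k} \mathcal{M}_{b, k}^{\text{pt}}(r_\bullet)$ over the moduli $\mathcal{M}_k$ of $k$-pointed disks. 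For $k = 3$, $\mathcal{M}_3$ is a point and each factor's virtual dimension $(n_j+1)a - 2n_j$ must vanish; since $\gcd(n_j, n_j+1) = 1$ and $n_j + 1 > 2$, no integer $a$ satisfies this, so the joint moduli is empty. For $k \geq 5$ odd the analysis is delicate: each factor dimension is even and congruent to $2k - 4 \pmod{n_j + 1}$ with sum $k - 3$, and such arithmetic constraints do admit solutions for certain $(n_1, n_2, k)$ (for instance $n_1 = n_2 = 3$, $k = 7$, $(a, b) = (4, 4)$), so the moduli may be nonempty. The principal obstacle is handling this $k \geq 5$ regime: my plan is to exploit the anti-holomorphic involution $\tau = \sigma_1 \times \sigma_2$ of $X$ (componentwise complex conjugation), which fixes $L$ pointwise, and its induced action on $J$-holomorphic disks (precomposed with conjugation of the domain $D$) to produce a free, orientation-reversing $\ZZ/2$-action on the fiber product, giving pairwise cancellation of signed contributions and thereby the desired vanishing.
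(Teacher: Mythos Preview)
Your treatment of the $\ell=1$ case and the $k\in\{0,1\}$ subcases of $\OGW_{\beta,k}^{(0)}$ is correct and matches the paper's reasoning (the paper organizes the $k\le 1$ arithmetic slightly differently but reaches the same contradiction $n_j\le 1$).

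The genuine gap is the case $k\ge 2$. The paper does not attempt any direct moduli-space analysis here; instead it invokes \cite[Remark~1.9]{SolomonTukachinsky2019relative}, which is precisely where the hypothesis ``$\Omega$ is defined by means of a point-like bounding pair'' enters. That remark asserts that for $n$ even the invariants $\OGW_{\beta,k}^{(0)}$ vanish whenever $k\ge 2$, as a structural feature of the point-like construction. This single citation disposes of all $k\ge 2$ at once, after which only $k\le 1$ remains and your arithmetic applies.

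Your proposed alternative for $k\ge 3$ is not a proof. For $k\ge 5$ you explicitly leave only a plan (a conjectural orientation-reversing $\ZZ/2$-action), and even for $k=3$ your fiber-product dimension count presupposes that $\OGW_{\beta,k}^{(0)}$ is literally a signed count of smooth disks through generic boundary points, which is not how the invariants of \cite{SolomonTukachinsky2019relative} are defined---they involve bounding chains and corrections from sphere bubbling, so a naive disk count does not compute them. Establishing that the conjugation involution is free and orientation-reversing on the relevant (virtual) moduli, compatibly with the bounding-chain corrections, would itself require substantial work comparable to what \cite{SolomonTukachinsky2019relative} already packages into the cited remark. You should replace the entire $k\ge 2$ discussion with that citation.
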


	\begin{proof}
		As noted above, the dimension being even implies that $\OGW_{\beta,0}^{\parentheses{1}}\left(\Gamma_{i}\right)=0$ by the degree axiom.
		
		We wish to show that all invariants in the set
		\begin{equation}
			\label{OGW_beta,k}\big\{\OGWbar_{(a,b),k}^{\parentheses{0}}\, \big|\, a,b\in\ZZ_{\geq 0},\,  n-3+\mu(a,b)+k=kn\big\}
		\end{equation}
		vanish.
		
		Let $\OGWbar_{(a,b),k}^{\parentheses{0}}$ be an invariant in the  set \eqref{OGW_beta,k}, and assume by contradiction that it does not vanish. By the zero axiom \eqref{Open Zero}, $(a,b)\neq (0,0)$. Note that $n = n_1 + n_2$ is even, and therefore by Remark 1.9 in \cite{SolomonTukachinsky2019relative}, the invariant $\OGWbar_{(a,b),k}^{\parentheses{0}}$ vanishes for $k\geq 2$. So, $k\leq 1$. Since this invariant does not vanish, it satisfies the degree axiom \eqref{Open Degree}:
		\[ n-3+\mu(a,b)+k=kn. \]
		Substituting $n = n_1 + n_2$ and $\mu(a,b) = \left(n_1+1\right)a + \left(n_2+1\right)b$, we get
		\begin{equation}\label{Degree for OGW_{(a,b),k}}
			n_1 + n_2 - 3 + \left(n_1+1\right)a + \left(n_2+1\right)b=k\left(n_1+n_2-1\right).
		\end{equation}
		Since $(a,b)\neq(0,0)$, Lemma~\ref{Deformation invariance implies degrees positive} implies $a>0$ or $b>0$. Assume without loss of generality that $a>0$, so $\left(n_1+1\right)a + \left(n_2+1\right)b\geq \left(n_1+1\right)$. Therefore
		\begin{equation}\label{a>0}
			n_1 + n_2 - 3 + \left(n_1+1\right)a + \left(n_2+1\right)b\geq n_1 + n_2 - 3 + \left(n_1+1\right) = 2n_1 + n_2 -2.
		\end{equation}
		On the other hand, $k\leq 1$, so
		\begin{equation}\label{k leq 1}
			k\left(n_1+n_2-1\right)\leq n_1+n_2-1.
		\end{equation}
		Combining \eqref{a>0} and \eqref{k leq 1}, we see that
		\[ 2n_1 + n_2 -2\leq n_1+n_2-1, \]
		so $n_1\leq 1$, contradicting our assumption. We deduce that all invariants in the  set \eqref{OGW_beta,k} vanish.
	\end{proof}

	We move on to prove the lemmas that make up the inductive step of the proof of Propositions~\ref{OGWs Vanish for n1 n2 are 5mod6} and~\ref{OGWs Vanish for n1 n2 are 1mod2 and pointlike}. We first prove the lemma relevant to invariants with two or more interior constraints.
	
	\begin{Lemma}
		\label{Everything is zero OMG Lemma pt. 2}
		In the setup of Proposition~\ref{OGWs Vanish for n1 n2 are 5mod6} or Proposition~\ref{OGWs Vanish for n1 n2 are 1mod2 and pointlike},
		let $\beta\in\Pi_{\geq 0}$,  $k,\ell\in\ZZ_{\geq 0}$, and $0\leq i_1,\dots,i_\ell\leq K$, and assume that $\beta\neq\beta_0$, $\ell\geq 2$ and $\left|\Gamma_{i_1}\right|\geq\dots\geq\left|\Gamma_{i_\ell}\right|$. Assume that for any $\beta'\in\Pi_{\geq 0}$,  $k',\ell'\in\ZZ_{\geq 0}$, and $0\leq i'_1,\dots,i'_\ell\leq K$, where $\beta\neq\beta_0$ and
		\[
		\left(\omega(\beta'),k',\ell',\left|\Gamma_{i'_1}\right|,\dots,\left|\Gamma_{i'_\ell}\right|\right)<\left(\omega(\beta),k,\ell,\left|\Gamma_{i_1}\right|,\dots,\left|\Gamma_{i_\ell}\right|\right),
		\]
		the coefficient $\OGWbar_{\beta',k'}^{\parentheses{\ell'}}(\Gamma_{i'_1}\otimes\dots\otimes\Gamma_{i'_\ell})$ vanishes. Then the coefficient $\OGWbar_{\beta,k}^{\parentheses{\ell}}(\Gamma_{i_1}\otimes\dots\otimes\Gamma_{i_\ell})$ also vanishes.
	\end{Lemma}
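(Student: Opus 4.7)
The plan is to invoke Lemma~\ref{All OGW are Computable lemma pt. 2}, which already shows that any coefficient $\OGWbar_{\beta,k}^{(\ell)}(\Gamma_{i_1}\otimes\dots\otimes\Gamma_{i_\ell})$ with $\ell\geq 2$ can be expressed as a finite combination of strictly lower-order coefficients of $\Omega$ together with closed Gromov--Witten invariants. All that remains is to check that under the inductive hypothesis every contribution on the right-hand side vanishes, so the coefficient itself is zero.

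Following the case analysis in the proof of Lemma~\ref{All OGW are Computable lemma pt. 2}, I would split into three subcases according to $|\Gamma_{i_\ell}|$. If $|\Gamma_{i_\ell}|=0$, then $\Gamma_{i_\ell}=1$ and the fundamental class axiom~\eqref{Open Fundamental Class} evaluates the coefficient directly; since $\beta\neq\beta_0$ and $\ell\geq 2$, none of the exceptional nonzero cases apply, so the value is $0$. If $|\Gamma_{i_\ell}|=2$, the divisor axiom~\eqref{Open Divisor} rewrites the coefficient as $\int_\beta\Gamma_{i_\ell}\cdot\OGWbar_{\beta,k}^{(\ell-1)}(\Gamma_{i_1}\otimes\dots\otimes\Gamma_{i_{\ell-1}})$; the remaining OGW factor still has $\beta\neq\beta_0$ and has strictly lower order, so it vanishes by induction.

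For the remaining subcase $|\Gamma_{i_\ell}|>2$, decompose $\Gamma_{i_\ell}=\Gamma_a\wedge\Gamma_b$ with $|\Gamma_b|=2$ and apply the first open WDVV relation. This produces precisely the identity~\eqref{summary of all OGW are computable lemma pt. 2}. The key observation is that in each of the four sums on its right-hand side, the summation indices for the degrees of the OGW factors are restricted to $\Pi_{>0}$: in the first and third sums one OGW factor has degree $\beta_1\in\Pi_{>0}$ or $\beta_2\in\Pi_{>0}$, and in the second and fourth sums \emph{both} OGW factors have degrees in $\Pi_{>0}$. Moreover, every OGW factor appearing on the right-hand side is of strictly lower order than $(\omega(\beta),k,\ell,|\Gamma_{i_1}|,\dots,|\Gamma_{i_\ell}|)$. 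Hence the inductive hypothesis forces every such factor to vanish, and the entire right-hand side is zero.

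I do not expect a substantive obstacle: the heavy lifting has already been carried out in the proof of Lemma~\ref{All OGW are Computable lemma pt. 2}, and the present lemma reduces to the simple structural observation that every OGW factor appearing in that lemma's summary identity lies in $\Pi_{>0}$ at strictly lower order, which is exactly the regime killed by induction. The only minor bookkeeping is to double-check the $|\Gamma_{i_\ell}|\in\{0,2\}$ subcases, which are disposed of by the fundamental class and divisor axioms together with the inductive hypothesis, as above.
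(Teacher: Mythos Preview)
Your proposal is correct and follows essentially the same approach as the paper's proof: the same three-way case split on $|\Gamma_{i_\ell}|$, with the $|\Gamma_{i_\ell}|>2$ case handled by appealing to the summary identity~\eqref{summary of all OGW are computable lemma pt. 2} and observing that every OGW factor on its right-hand side has degree in $\Pi_{>0}$ and strictly lower order, so the inductive hypothesis annihilates it.
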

	
	\begin{proof}
		We go over the proof of Lemma~\ref{All OGW are Computable lemma pt. 2} and show that $\OGWbar_{\beta,k}^{\parentheses{\ell}}(\Gamma_{i_1}\otimes\dots\otimes\Gamma_{i_\ell})$ vanishes under our assumptions. There are three possible cases:
		\begin{itemize}
			\item  {$|\Gamma_{i_\ell}| = 0$:} In this case, $\Gamma_{i_\ell}=1$. Since we assume that $\beta\neq\beta_0$, we deduce that $\OGWbar_{\beta,k}^{\parentheses{\ell}}(\Gamma_{i_1}\otimes\dots\otimes\Gamma_{i_\ell})$ vanishes by the fundamental class axiom \eqref{Open Fundamental Class}.
			\item  {$|\Gamma_{i_\ell}| = 2$}: In this case, by the divisor axiom \eqref{Open Divisor}, we have
			\begin{equation*}
				\OGWbar_{\beta,k}^{\parentheses{\ell}}(\Gamma_{i_1}\otimes\dots\otimes\Gamma_{i_\ell}) = \int_\beta\Gamma_{i_\ell}\cdot\OGWbar_{\beta,k}^{\parentheses{\ell-1}}(\Gamma_{i_1}\otimes\dots\otimes\Gamma_{i_{\ell-1}}).
			\end{equation*}
			The coefficient $\OGWbar_{\beta,k}^{\parentheses{\ell-1}}(\Gamma_{i_1}\otimes\dots\otimes\Gamma_{i_{\ell-1}})$ is of lower order and of nonzero degree, and therefore vanishes. We deduce that the coefficient $\OGWbar_{\beta,k}^{\parentheses{\ell}}(\Gamma_{i_1}\otimes\dots\otimes\Gamma_{i_\ell})$ also vanishes.
			\item  {$|\Gamma_{i_\ell}| > 2$}: In this case, following the proof of Lemma \ref{All OGW are Computable lemma pt. 2}, we get the relation \eqref{summary of all OGW are computable lemma pt. 2}, where the right-hand side does not contain any terms of order $\left(\omega(\beta),k,\ell,\left|\Gamma_{i_1}\right|,\dots,\left|\Gamma_{i_\ell}\right|\right)$ or higher, nor any terms with degree $\beta_0$. We deduce by the assumptions of the lemma that $\OGWbar_{\beta,k}^{\parentheses{\ell}}(\Gamma_{i_1}\otimes\dots\otimes\Gamma_{i_\ell})$ vanishes.

		\end{itemize}
		The result follows.
	\end{proof}
	
	Before proving the lemma that makes up the induction step for invariants with one interior constraint, we need to prove a preliminary fact.
	
	\begin{Lemma}\label{g^{munu} where nu = 1 has mu = fundamental class}
		Let $1\leq\nu\leq K$ such that $\Gamma_\nu = 1$. Then $g^{\mu\nu} = 0$ for all $1\leq\mu\leq K$ such that $\left|\Gamma_\mu\right|\neq 2n$.
	\end{Lemma}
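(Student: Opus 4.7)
The plan is to use Poincaré duality and a simple block-structure argument. The key observation is that, by Poincaré duality, the matrix $(g_{ij})$ is \emph{graded}: the entry $g_{ij}=\int_X\Delta_i\wedge\Delta_j$ can be nonzero only when $|\Delta_i|+|\Delta_j|=2n$, since otherwise the integrand is not a top-degree form. So, if one reorders the basis $\{\Delta_0,\dots,\Delta_K\}$ by cohomological degree, grouping basis elements of the same degree together, then $(g_{ij})$ becomes block anti-diagonal with respect to the decomposition $H^*(X;\RR) = \bigoplus_k H^k(X;\RR)$, with nonzero blocks only pairing the degree-$k$ subspace with the degree-$(2n-k)$ subspace.

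Next I would observe that the inverse of a nondegenerate block anti-diagonal matrix of this form is again block anti-diagonal with the same support pattern. Concretely, each Poincaré pairing block $H^k\otimes H^{2n-k}\to\RR$ is nondegenerate, so the inverse matrix is obtained by inverting each such block separately; the entries $g^{\mu\nu}$ can therefore be nonzero only when $|\Delta_\mu|+|\Delta_\nu|=2n$. Equivalently, writing $\Delta^\nu:=\sum_\mu g^{\nu\mu}\Delta_\mu$ for the Poincaré-dual basis characterized by $\int_X\Delta_i\wedge\Delta^j=\delta_i^j$, the class $\Delta^\nu$ is homogeneous of degree $2n-|\Delta_\nu|$, which forces $g^{\mu\nu}=0$ whenever $|\Delta_\mu|\ne 2n-|\Delta_\nu|$.

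Applying this to the case of the lemma: if $\Gamma_\nu=1$ for some $1\le\nu\le K$, then under the identification from Section~\ref{Choice of Bases for Relative and Absolute Cohomologies} we have $\Delta_\nu=1\in H^0(X;\RR)$, so $|\Delta_\nu|=0$. The conclusion $g^{\mu\nu}=0$ unless $|\Gamma_\mu|=|\Delta_\mu|=2n$ follows immediately.

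There is no real obstacle here: the statement is a direct consequence of Poincaré duality and the graded structure of the cup product. The only point to be careful about is the identification between $\Gamma_\mu$ and $\Delta_\mu$ for indices $1\le\mu\le K$, but this is handled uniformly in both cases ($[L]=0$ and $[L]\ne 0$) by the conventions fixed in Section~\ref{Choice of Bases for Relative and Absolute Cohomologies}.
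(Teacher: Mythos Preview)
Your proof is correct. Both your argument and the paper's rest on the same underlying fact---Poincar\'e duality forces $g_{ij}=0$ unless $|\Delta_i|+|\Delta_j|=2n$---but they extract the conclusion differently. You observe that this makes $(g_{ij})$ block anti-diagonal with respect to the grading, and that the inverse of such a matrix has the same block pattern (equivalently, the Poincar\'e-dual basis element $\Delta^\nu=\sum_\mu g^{\nu\mu}\Delta_\mu$ is homogeneous of degree $2n-|\Delta_\nu|$). The paper instead uses the cofactor formula $g^{\mu\nu}=\det(g)^{-1}(-1)^{\mu+\nu}\det(G_{\nu\mu})$ together with the fact that $H^{2n}(X;\RR)$ is one-dimensional: the unique top-degree basis element $\Delta_{\mu'}$ pairs nontrivially only with $\Delta_\nu=1$, so after deleting row $\nu$ and column $\mu\ne\mu'$ the minor $G_{\nu\mu}$ has a zero column. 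Your argument is a bit more conceptual and does not use one-dimensionality of $H^{2n}$; the paper's is more hands-on. Either way the lemma is immediate from Poincar\'e duality.
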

	
	\begin{proof}
		Let $1\leq\mu\leq K$ such that $\left|\Gamma_\mu\right|\neq 2n$. let $G_{\mu\nu}$ denote the $\mu,\nu$-minor matrix of $\left(g_{ij}\right)$, given by deleting the $\mu$-th row and $\nu$-th column of $\left(g_{ij}\right)$. By properties of the adjugate matrix to $\left(g_{ij}\right)$, we get
		\begin{equation*}
			g^{\mu\nu} = \frac{1}{\det(g)}\cdot(-1)^{\mu+\nu}\cdot\det\left(G_{\nu\mu}\right).
		\end{equation*}
		Now, $X$ is orientable, so there exists a unique $1\leq\mu'\leq K$ such that $\left|\Gamma_{\mu'}\right| = 2n$. By Poincar\'{e} duality,
		\[ g_{\mu'\nu'} = \int_X\Delta_{\mu'}\wedge\Delta_{\nu'} = 0 \]
		for all $1\leq\nu'\leq K$ such that $\nu'\neq\nu$. We deduce that the $\mu'$-th column of the minor matrix $G_{\nu\mu}$ is a column of zeros, and therefore $\det\left(G_{\nu\mu}\right)$ vanishes. Therefore $g^{\mu\nu}=0$.
	\end{proof}
	
	In the following proof, we utilize a fact about (closed) Gromov-Witten classes in products of symplectic manifolds. Let us first recall the definition of Gromov-Witten classes.
	\begin{Definition}
		Let $(X,\omega)$ be a symplectic manifold with an $\omega$-tame almost complex structure $J$. Let $\beta\in H_2(X;\ZZ)$, $\ell\geq 0$.
		Denote by $\overline{\mathcal{M}}_{0,\ell}(\beta)$
		the Gromov compactification of the moduli space of $J$-holomorphic curves of genus $0$ and degree $\beta$ with $\ell$ marked points. Denote by
		\[
		ev_i:\overline{\mathcal{M}}_{0,\ell}(\beta)\rightarrow X
		\]
		the evaluation map at the $i$'th marked point, for $1\leq i\leq \ell$.
		Denote by $\overline{\mathcal{M}}_{0,\ell}$ the space of stable curves of genus $0$ with $\ell$ marked points. Let
		\[
		\pi:\overline{\mathcal{M}}_{0,\ell}(\beta)\rightarrow\overline{\mathcal{M}}_{0,\ell}
		\]
		be the forgetful map.
		The \textit{Gromov-Witten class} $I_{\beta,\ell}^X$ is a mapping
		\[
		I_{\beta,\ell}^X:H^*(X;\RR)^{\otimes \ell}\rightarrow H^*(\overline{\mathcal{M}}_{0,\ell};\RR),
		\]
		defined by
		\[ I_{\beta,\ell}^X(\gamma_1\otimes\dots\otimes\gamma_\ell) =\pi_*(ev_1^*\gamma_1\wedge\dots\wedge ev_n^*\gamma_\ell)\in H^*( \overline{\mathcal{M}}_{0,\ell};\RR) \]
		for any $\gamma_1,\dots,\gamma_\ell\in H^*(X;\RR)$. These classes are related to the closed Gromov-Witten invariants via
		\[ \GW_{\beta}\left(\gamma_1\otimes\dots\otimes\gamma_\ell\right) = \int_{\overline{\mathcal{M}}_{0,\ell}}I_{\beta,\ell}^X\left(\gamma_1\otimes\dots\otimes\gamma_\ell\right), \]
		whenever the integral over $\overline{\mathcal{M}}_{0,\ell}$ is well-defined (for example, whenever $\overline{\mathcal{M}}_{0,\ell}$ is a smooth orbifold, as is the case with projective spaces~\cite{RobbinSalamon}).
	\end{Definition}
	For a fuller discussion of Gromov-Witten classes, see \cite{CoxKatz}. We now cite a theorem, proven in greater generality in \cite[Theorem 9]{behrend1997product}, that gives a formula for Gromov-Witten classes of products of manifolds.
	\begin{Theorem}\label{behrend products}
		For any $\beta\in H_2(X;\ZZ)$ and $\gamma_{1,i},\dots,\gamma_{\ell,i}\in H^*(\CC P^{n_i};\RR)$ for $i\in\{1,2\}$, we have
		\begin{multline*}
			I_{\beta,\ell}^{\CC P^{n_1}\times \CC P^{n_2}}((\pi_1^*\gamma_{1,1}\wedge \pi_2^*\gamma_{1,2})\otimes\dots\otimes(\pi_1^*\gamma_{\ell,1}\wedge \pi_2^*\gamma_{\ell,2}))\\
			= (-1)^\sigma I_{\pi_{1*}\beta,\ell}^{\CC P^{n_1}}(\gamma_{1,1}\otimes\dots\otimes\gamma_{\ell,1})\wedge I_{\pi_{2*}\beta,\ell}^{\CC P^{n_2}}(\gamma_{1,2}\otimes\dots\otimes\gamma_{\ell,2}),
		\end{multline*}
		where $\pi_1:\CC P^{n_1}\times \CC P^{n_2}\rightarrow \CC P^{n_1}$ and $\pi_2:\CC P^{n_1}\times \CC P^{n_2}\rightarrow \CC P^{n_2}$ are the projections and
		$\sigma = \sum\limits_{i>j}|\gamma_{i,1}|\cdot|\gamma_{j,2}|.$
		In particular, the corresponding Gromov-Witten invariant is
		\begin{multline*}
			\GW_{\beta}((\pi_1^*\gamma_{1,1}\wedge \pi_2^*\gamma_{1,2})\otimes\dots\otimes(\pi_1^*\gamma_{\ell,1}\wedge \pi_2^*\gamma_{\ell,2}))\\
			= (-1)^\sigma \int_{\overline{\mathcal{M}}_{0,\ell}}
			I_{\pi_{1*}\beta,\ell}^{\CC P^{n_1}}(\gamma_{1,1}\otimes\dots\otimes\gamma_{\ell,1})\wedge I_{\pi_{2*}\beta,\ell}^{\CC P ^{n_2}}(\gamma_{1,2}\otimes\dots\otimes\gamma_{\ell,2}).
		\end{multline*}
	\end{Theorem}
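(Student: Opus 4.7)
The plan is to compare the moduli space of stable maps to $X = \CC P^{n_1} \times \CC P^{n_2}$ with the fiber product of the moduli spaces of stable maps to the two factors. Any stable map $u : (\Sigma, x_\bullet) \to X$ of degree $\beta$ is determined by its components $u_i = \pi_i \circ u$ of degrees $\beta_i := \pi_{i*}\beta$, but a component of $\Sigma$ contracted by one $u_i$ and not the other is unstable individually while remaining stable for the pair; after a stabilization step, one obtains a morphism
\[ \Phi : \overline{\mathcal{M}}_{0,\ell}(X, \beta) \longrightarrow \overline{\mathcal{M}}_{0,\ell}(\CC P^{n_1}, \beta_1) \times_{\overline{\mathcal{M}}_{0,\ell}} \overline{\mathcal{M}}_{0,\ell}(\CC P^{n_2}, \beta_2). \]
The crucial technical input, and the content of \cite[Theorem 9]{behrend1997product}, is that the virtual fundamental class of the left-hand side is the pullback under $\Phi$ of the intersection-theoretic product of the two factor virtual classes. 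For convex targets such as $\CC P^{n}$ and their products, the moduli spaces are smooth proper Deligne-Mumford stacks of the expected dimension (cf.~\cite{RobbinSalamon}), so the virtual classes are honest fundamental classes, but the transfer under $\Phi$ still has to be tracked.

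Granting that compatibility, the remainder is bookkeeping. Let $p_i$ denote the projection from the fiber product onto the $i$-th factor; then the evaluation maps decompose as $ev_j^X = (ev_j^1 \circ p_1,\, ev_j^2 \circ p_2)$ and the forgetful maps satisfy $\pi^X = \pi^1 \circ p_1 = \pi^2 \circ p_2$. For each $j$, the K\"unneth formula gives
\[ (ev_j^X)^* (\pi_1^* \gamma_{j,1} \wedge \pi_2^* \gamma_{j,2}) = p_1^* (ev_j^1)^* \gamma_{j,1} \wedge p_2^* (ev_j^2)^* \gamma_{j,2}. \]
Taking the wedge over $j = 1, \dots, \ell$ and using graded commutativity to gather all $\gamma_{\bullet, 1}$-factors on one side and all $\gamma_{\bullet, 2}$-factors on the other produces the Koszul sign $(-1)^{\sigma}$ with $\sigma = \sum_{i > j} |\gamma_{i,1}| \cdot |\gamma_{j,2}|$, yielding an integrand of the form $(-1)^\sigma \, p_1^* A_1 \wedge p_2^* A_2$ where $A_i = \bigwedge_{j=1}^\ell (ev_j^i)^* \gamma_{j,i}$.

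It then remains to apply $\pi^X_*$. Using the factorization $\pi^X = \pi^i \circ p_i$ together with the projection/base-change formula for the Cartesian square defining the fiber product, one obtains
\[ \pi^X_* \bigl( p_1^* A_1 \wedge p_2^* A_2 \bigr) = \pi^1_*(A_1) \wedge \pi^2_*(A_2) = I^{\CC P^{n_1}}_{\beta_1, \ell}(\gamma_{\bullet, 1}) \wedge I^{\CC P^{n_2}}_{\beta_2, \ell}(\gamma_{\bullet, 2}), \]
which is the desired class-level identity; integrating over $\overline{\mathcal{M}}_{0,\ell}$ then produces the numerical statement. The hardest step is unquestionably the virtual-class compatibility asserted in the first paragraph: setting it up rigorously requires either the full machinery of relative obstruction theories over the Artin stack of prestable curves (as in \cite{behrend1997product}), or, specific to convex targets, a direct verification that $\Phi$ is birational over a dense open locus and that no excess intersection contributions appear. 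The downstream work, being purely functorial, is comparatively routine.
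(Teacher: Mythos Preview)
The paper does not prove this theorem at all: it is stated with the preamble ``We now cite a theorem, proven in greater generality in \cite[Theorem 9]{behrend1997product},'' and is used as a black box from the literature. Your proposal is therefore not comparable to anything in the paper; rather, it is a sketch of (what is essentially) Behrend's original argument, correctly identifying the fiber-product comparison, the virtual-class compatibility as the substantive step, and the Koszul/projection-formula bookkeeping as routine.
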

	
	We are now ready to prove the next inductive step.
	\begin{Lemma}
		\label{Everything is zero OMG Lemma pt. 2, 2}
		In the setup of Proposition~\ref{OGWs Vanish for n1 n2 are 5mod6} or Proposition~\ref{OGWs Vanish for n1 n2 are 1mod2 and pointlike},
		let $\beta\in\Pi_{\geq 0}$,  $k\in\ZZ_{\geq 0}$, and $0\leq i\leq K$, and assume that $\beta\neq\beta_0$ and $k\geq 1$. Assume that for any $\beta'\in\Pi_{\geq 0}$,  $k',\ell'\in\ZZ_{\geq 0}$, and $0\leq i'_1,\dots,i'_\ell\leq K$, where $\beta\neq\beta_0$ and $\left(\omega(\beta'),k',\ell',\left|\Gamma_{i'_1}\right|,\dots,\left|\Gamma_{i'_\ell}\right|\right)<\left(\omega(\beta),k,1,\left|\Gamma_{i}\right|\right)$, the coefficient $\OGWbar_{\beta',k'}^{\parentheses{\ell'}}(\Gamma_{i'_1}\otimes\dots\otimes\Gamma_{i'_\ell})$ vanishes. Then the coefficient $\OGWbar_{\beta,k}^{\parentheses{1}}(\Gamma_{i})$ also vanishes.
	\end{Lemma}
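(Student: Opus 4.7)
The plan is to adapt the proof of Lemma~\ref{All OGW are Computable lemma pt. 2, 2} from ``compute via lower order'' to ``show vanishing'', using the inductive hypothesis that all lower-order $\OGWbar$-invariants of nonzero degree vanish. First, I will dispose of the easy cases: $|\Gamma_i|=0$ is killed by the fundamental class axiom~\eqref{Open Fundamental Class} since $\beta\neq\beta_0$, and $|\Gamma_i|=2$ is reduced via the divisor axiom~\eqref{Open Divisor} to $\OGWbar_{\beta,k}^{\parentheses{0}}$, which has strictly lower order and vanishes by induction.

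For $|\Gamma_i|>2$, I write $\Gamma_i=(\pi_1^*[\omega_1])^{i_1}\wedge(\pi_2^*[\omega_2])^{i_2}$ with $i_1+i_2\geq 2$ and pick a decomposition $\Gamma_i=\Gamma_a\wedge\Gamma_b$ with $\Gamma_b\in\{\pi_1^*[\omega_1],\pi_2^*[\omega_2]\}$ a divisor (at least one such choice is valid). Equating coefficients of $T^\beta s^k t_i$ in the second open WDVV relation yields the $\ell=1$ specialization of equation~\eqref{summary of all OGW are computable lemma pt. 2, 2}. The first and third summands on the right-hand side are products of $\OGWbar$-factors with $\beta_1,\beta_2\in\Pi_{>0}$ summing to $\beta$, so each factor has strictly lower energy with nonzero degree and vanishes by the inductive hypothesis. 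The second summand is $\GW_{\beta_1}(\Delta_a\otimes\Delta_b\otimes\Delta_\mu)g^{\mu\nu}\OGWbar_{\beta_2,k}^{\parentheses{1}}(\Gamma_\nu)$ with $\varpi(\beta_1)+\beta_2=\beta$ and $\omega(\beta_1)>0$. If $\beta_2\neq\beta_0$, the $\OGWbar$-factor has $\omega(\beta_2)<\omega(\beta)$ and vanishes by induction; if $\beta_2=\beta_0$, the zero axiom~\eqref{Open Zero} forces it to vanish unless $k=1$ and $\Gamma_\nu=1$, disposing of the second summand whenever $k\geq 2$.

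The main obstacle is the residual contribution at $k=1$, $\beta_2=\beta_0$, $\Gamma_\nu=1$. By Lemma~\ref{g^{munu} where nu = 1 has mu = fundamental class}, the sum over $\mu$ collapses to the unique index with $|\Delta_\mu|=2n$, and since $\varpi$ doubles each coordinate, the remaining sum over $\beta_1$ contains at most one term, namely $\beta/2$, provided $\beta\in 2\ZZ\oplus 2\ZZ$; otherwise the sum is empty and we are done. I apply Theorem~\ref{behrend products} to factor the surviving closed Gromov-Witten invariant into invariants on $\CC P^{n_1}$ and $\CC P^{n_2}$. Taking $\Delta_b=\pi_1^*[\omega_1]$ without loss of generality and writing $\beta_1=(\tilde a,\tilde b)$ together with $\Delta_a=(\pi_1^*[\omega_1])^{a_1}\wedge(\pi_2^*[\omega_2])^{a_2}$ where $a_1=i_1-1$ and $a_2=i_2$: the $\CC P^{n_2}$-factor receives the fundamental class $1$ among three inputs, so the closed fundamental class axiom~\eqref{Closed Fundamental Class} forces $\tilde b=0$ and $a_2=0$. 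After the closed divisor axiom~\eqref{Closed Divisor}, the surviving $\CC P^{n_1}$-factor becomes a two-point invariant whose degree axiom~\eqref{Closed Degree} demands $a_1=(n_1+1)\tilde a-1$; this is incompatible with $a_1\leq n_1-1$ when $\tilde a\geq 1$, and $\tilde a=0$ combined with $\tilde b=0$ contradicts $\omega(\beta_1)>0$. The case $\Delta_b=\pi_2^*[\omega_2]$ is symmetric. Hence the residual contribution also vanishes and $\OGWbar_{\beta,k}^{\parentheses{1}}(\Gamma_i)=0$.
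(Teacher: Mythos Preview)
Your proof is correct and follows essentially the same approach as the paper's: both handle the easy cases $|\Gamma_i|\leq 2$ identically, invoke the $\ell=1$ specialization of~\eqref{summary of all OGW are computable lemma pt. 2, 2}, dispose of the first and third summands via the inductive hypothesis (both factors have nonzero degree and strictly smaller energy), and reduce the second summand to the residual $k=1$, $\beta_2=\beta_0$, $\Gamma_\nu=1$ term, then kill the surviving closed invariant $\GW_{\beta_1}(\Delta_a\otimes\Delta_b\otimes\Delta_\mu)$ via the product formula of Theorem~\ref{behrend products}.

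The only noteworthy difference is in how the final vanishing is organized. The paper first applies the degree axiom on $X$ to bound $c_1(\tilde\beta)$ and conclude that at most one of $\tilde a,\tilde b$ is positive, then uses the zero axiom on the degree-zero factor to force $\Delta_i\in\pi_1^*H^*(\CC P^{n_1})$, yielding $|\Delta_i|\leq 2n_1$ and a contradiction. You instead exploit your specific choice $\Delta_b\in\{\pi_1^*[\omega_1],\pi_2^*[\omega_2]\}$ so that one factor in the product formula automatically receives the fundamental class as an input; the fundamental class axiom then forces $\tilde b=0$ and $a_2=0$, and the degree axiom on the remaining $\CC P^{n_1}$-factor gives the contradiction $(n_1+1)\tilde a=a_1+1$ with $a_1\leq n_1-1$. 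Both routes are short and valid; yours has the minor advantage of avoiding the preliminary bound $c_1(\tilde\beta)\leq n$, while the paper's is agnostic about which divisor is chosen for $\Delta_b$. One small point: you should note (as the paper implicitly does) that $\tilde a,\tilde b\geq 0$ for $\GW_{(\tilde a,\tilde b)}$ to be nonzero on $\CC P^{n_1}\times\CC P^{n_2}$, to rule out $\tilde a<0$ in your final step.
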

	
	\begin{proof}
		We go over the proof of Lemma~\ref{All OGW are Computable lemma pt. 2, 2}, and show that $\OGWbar_{\beta,k}^{\parentheses{1}}(\Gamma_{i})$ vanishes under our assumptions. There are several possible cases:
		\begin{itemize}
			\item  {$|\Gamma_{i}| = 0$:} In this case, $\Gamma_{i}=1$. Since we assume that $\beta\neq\beta_0$, we deduce that $\OGWbar_{\beta,k}^{\parentheses{1}}(\Gamma_{i})$ vanishes due to the fundamental class axiom \eqref{Open Fundamental Class}.
			\item  {$|\Gamma_{i}| = 2$}: In this case, by the divisor axiom \eqref{Open Divisor}, we have
			\begin{equation*}
				\OGWbar_{\beta,k}^{\parentheses{1}}(\Gamma_{i}) = \int_\beta\Gamma_{i}\cdot\OGWbar_{\beta,k}^{\parentheses{0}}.
			\end{equation*}
			The coefficient $\OGWbar_{\beta,k}^{\parentheses{0}}$ is of lower order and of nonzero degree, and therefore vanishes. We deduce, then, that the coefficient $\OGWbar_{\beta,k}^{\parentheses{1}}(\Gamma_{i})$ also vanishes.
			\item  {$|\Gamma_{i_\ell}| > 2$}: In this case, following our proof of Lemma \ref{All OGW are Computable lemma pt. 2, 2}, we get the relation \eqref{summary of all OGW are computable lemma pt. 2, 2}, where the right-hand side does not contain any terms of order $\left(\omega(\beta),k,1,\left|\Gamma_{i}\right|\right)$ or higher, and the only terms that involve degree $\beta_0$ are of the form
			
			\begin{align}
				\label{Open WDVV For Gamma_a=delta', Gamma_b=delta 2, 1 OMG Vanishing}&\sum\limits_{0 \leq\mu,\nu\leq \kappa} \GW_{\tilde{\beta}}\big(\Delta_a\otimes\Delta_b\otimes\Delta_\mu\big)g^{\mu\nu}\OGWbar_{\beta_0,k}^{\parentheses{1}}\big(\Gamma_\nu\big),
			\end{align}		
			for some $\tilde{\beta}\in H_2(X;\ZZ)$ that satisfies $\varpi(\tilde{\beta})=\beta$. In particular, $\tilde{\beta}\neq(0,0)$. If $\OGWbar_{\beta_0,k}^{\parentheses{1}}\big(\Gamma_\nu\big)$ is nonzero, the zero axiom \eqref{Open Zero} implies that $k=1$ and that $\Gamma_\nu = 1$. By Lemma \ref{g^{munu} where nu = 1 has mu = fundamental class}, we get $\left|\Gamma_\mu\right| = 2n$. If $\GW_{\tilde{\beta}}\big(\Delta_a\otimes\Delta_b\otimes\Delta_\mu\big)$ does not vanish, the degree axiom \eqref{Open Degree} implies that
			\begin{equation*}
				2n-6 + 2c_1(\tilde{\beta}) + 6 = \left|\Delta_a\right| + \left|\Delta_b\right| + \left|\Delta_\mu\right|.
			\end{equation*}
			Since $\left|\Delta_a\right| + \left|\Delta_b\right| = \left|\Delta_{i}\right|$ and $\left|\Delta_\mu\right| = 2n$, we get
			\begin{equation*}
				2n + 2c_1(\tilde{\beta}) = 2n + \left|\Delta_{i}\right|.
			\end{equation*}
			So,
			\begin{equation}\label{closed degree in OMG vanishing}
				2c_1(\tilde{\beta}) = \left|\Delta_{i}\right|.
			\end{equation}
			Write $\tilde{\beta} = (\tilde{a},\tilde{b})$ for some $\tilde{a},\tilde{b}\in\ZZ_{\geq 0}$. The first Chern class is
			\[ c_1(\tilde{a},\tilde{b}) = (n_1+1)\tilde{a}+(n_2+1)\tilde{b}. \]
			It follows that
			\[
			2(n_1+1)\tilde{a}+2(n_2+1)\tilde{b} = \left|\Delta_{i}\right|\leq 2n = 2(n_1+n_2).
			\]
			Therefore, $\tilde{a}$ and $\tilde{b}$ cannot simultaneously be positive.
			
			Assume without loss of generality that $\tilde{b}=0$ and $\tilde{a}>0$. By Theorem~\ref{behrend products}, we know that
			\begin{align*}
				\GW_{(\tilde{a},0)}\left(\Delta_a\otimes\Delta_b\otimes\Delta_\mu\right) = \int_{\overline{\mathcal{M}}_{0, 3}} I_{\tilde{a}}^{\CC P^{n_1}}\left(\alpha_1\otimes\alpha_2\otimes \omega_1^{n_1}\right)\wedge I_{0}^{\CC P^{n_2}}\left(\gamma_1\otimes\gamma_2\otimes \omega_2^{n_2}\right),
			\end{align*}
			for $\alpha_1,\alpha_2\in H^*(\CC P^{n_1};\RR)$ and $\gamma_1,\gamma_2\in H^*(\CC P^{n_2};\RR)$ such that $\pi_1^*\alpha_1\wedge\pi_2^*\gamma_1 = \Delta_a$ and $\pi_1^*\alpha_2\wedge\pi_2^*\gamma_2 = \Delta_b$.
			
			The class $I_{0}^{\CC P^{n_2}}\left(\gamma_1\otimes\gamma_2\otimes \omega_2^{n_2}\right)$ is a top form in $\overline{\mathcal{M}}_{0, 3}=\{pt\}$, and it is therefore a real number. The closed zero axiom \eqref{Closed Zero} tells us that
			\[
			I_{0}^{\CC P^{n_2}}\left(\gamma_1\otimes\gamma_2\otimes \omega_2^{n_2}\right)
			= \GW_{0}\left(\gamma_1\otimes\gamma_2\otimes \omega_2^{n_2}\right)
			= \int_{\CC P^{n_2}}\gamma_1\wedge\gamma_2\wedge \omega_2^{n_2}.
			\]
			This term is nonzero, so $\gamma_1 = \gamma_2 = 1$. Therefore, $\pi_1^*\alpha_1 = \Delta_a$ and $\pi_1^*\alpha_2 = \Delta_b$. In particular,
			\[  \Delta_i = \Delta_a\wedge\Delta_b = \pi_1^*\left(\alpha_1\wedge\alpha_2\right)\in \pi_1^*\left(H^*(\CC P^{n_1};\RR)\right), \]
			and therefore $\left|\Delta_i\right|\leq\dim_\RR\CC P^{n_1} = 2n_1$. Equation \eqref{closed degree in OMG vanishing} implies
			\[ 2(n_1+1)\tilde{a} = \left|\Delta_i\right| \leq 2n_1, \]
			and therefore $\tilde{a} = 0$, contradicting our assumption. We deduce that the term \eqref{Open WDVV For Gamma_a=delta', Gamma_b=delta 2, 1 OMG Vanishing} must vanish.
			
			In total, we see that there are no nonvanishing terms in the right-hand side of \eqref{summary of all OGW are computable lemma pt. 2, 2} with a coefficient of degree $\beta_0$. Therefore, $\OGWbar_{\beta,k}^{\parentheses{1}}(\Gamma_{i})$ vanishes by our assumption.
		\end{itemize}
	\end{proof}
	
	Combining Lemmas \ref{Initial values vanish for n1 n2 5mod6}, \ref{Initial values Vanish OMG}, \ref{Everything is zero OMG Lemma pt. 2}, and \ref{Everything is zero OMG Lemma pt. 2, 2}, we now proceed to prove Propositions~\ref{OGWs Vanish for n1 n2 are 5mod6} and~\ref{OGWs Vanish for n1 n2 are 1mod2 and pointlike}.
	
	\begin{proof}[Proof of Propositions~\ref{OGWs Vanish for n1 n2 are 5mod6} and~\ref{OGWs Vanish for n1 n2 are 1mod2 and pointlike}]
		Let $\OGWbar_{\beta, k}^{(\ell)}\left(\gamma_{1}\otimes\dots\otimes\gamma_{\ell}\right)$ be as in the statement of Proposition~\ref{OGWs Vanish for n1 n2 are 5mod6} or~\ref{OGWs Vanish for n1 n2 are 1mod2 and pointlike}. By linearity, we may assume without loss of generality that there exist $1\leq i_1,\dots,i_\ell\leq N$ such that $\gamma_j = \Gamma_{i_j}$ for all $1\leq j\leq \ell$. By repeated application of the wall crossing formula \eqref{Wall Crossing}, we may assume without loss of generality that $1\leq i_1,\dots, i_\ell\leq K$. By the symmetry axiom \eqref{Open Symmetry}, we may assume without loss of generality that $\left|\Gamma_{i_1}\right|\geq\dots\geq\left|\Gamma_{i_\ell}\right|$.
		
		We now proceed via induction on the number of computational steps needed to compute $\OGWbar_{\beta, k}^{(\ell)}\left(\Gamma_{i_1}\otimes\dots\otimes\Gamma_{i_\ell}\right)$, via the algorithm prescribed in the proof of Theorem \ref{All OGW are computable 1 Fellowship of the OGW}:
		\begin{itemize}
			\item {Basis of induction:} If $\OGWbar_{\beta, k}^{(\ell)}\left(\Gamma_{i_1}\otimes\dots\otimes\Gamma_{i_\ell}\right)$ requires $0$ steps to compute according to Theorem~\ref{All OGW are computable 1 Fellowship of the OGW}, then it is an initial value, as prescribed by Theorem \ref{All OGW are computable 1 Fellowship of the OGW}. In the setup of Proposition~\ref{OGWs Vanish for n1 n2 are 5mod6} (resp. Proposition~\ref{OGWs Vanish for n1 n2 are 1mod2 and pointlike}), the invariant $\OGWbar_{\beta, k}^{(\ell)}\left(\Gamma_{i_1}\otimes\dots\otimes\Gamma_{i_\ell}\right)$ vanishes by Lemma~\ref{Initial values vanish for n1 n2 5mod6} (resp. Lemma~\ref{Initial values Vanish OMG}).
			\item {Induction step:} Assume that all open Gromov-Witten invariants that require a smaller number of steps to compute than that of $\OGWbar_{\beta, k}^{(\ell)}\left(\Gamma_{i_1}\otimes\dots\otimes\Gamma_{i_\ell}\right)$ vanish, except perhaps the invariants of degree $\beta_0$. Then Lemmas \ref{Everything is zero OMG Lemma pt. 2} and \ref{Everything is zero OMG Lemma pt. 2, 2} imply that the invariant $\OGWbar_{\beta, k}^{(\ell)}\left(\Gamma_{i_1}\otimes\dots\otimes\Gamma_{i_\ell}\right)$ vanishes.
		\end{itemize}
		This completes the proof.
	\end{proof}

	\bibliographystyle{../../amsabbrvcnobysame.bst}
	\bibliography{../../mybibfile.bib}

	
\end{document}